\documentclass{article}
\usepackage{amsmath, amsthm, amssymb, amstext}
\usepackage[utf8]{inputenc}
\usepackage[T1]{fontenc}
\usepackage{array,amsfonts,mathrsfs,bm}
\usepackage{cancel}
\usepackage[shortlabels]{enumitem}
\theoremstyle{plain}
\newtheorem{theorem}{Theorem}[section]
\newtheorem{lemma}[theorem]{Lemma}
\newtheorem{prop}[theorem]{Proposition}

\title{Stability estimates in determination of non-orientable surface from its Dirichlet-to-Neumann map}
\author{D.V.Korikov\thanks {St.Petersburg Department of Steklov Mathematical Institute,
		\newline 
		St. Petersburg, Russia,
        \newline
        e-mail: thecakeisalie@list.ru,
        \newline
        ORCID: 0000-0002-3212-5874}.}

\date{\today}


\begin{document}
\maketitle
\begin{abstract}
Let $(M,g)$ and $(M',g')$ be non-orientable Riemannian surfaces with fixed boundary $\Gamma$ and fixed Euler characterictic $m$, and $\Lambda$ and $\Lambda'$ be their Dirichlet-to-Neumann maps, respectively. We prove that the closeness of $\Lambda'$ to $\Lambda$ in the operator norm implies the existence of of the near-conformal diffeomorphism $\beta$ between $(M,g)$ and $(M',g')$ which does not move the points of $\Gamma$. Hence we establish the continuity of the determination $\Lambda\mapsto [(M,g)]$, where $[(M,g)]$ is the conformal class of $(M,g)$ and the set of such conformal classes is endowed with the natural Teichm\"uller-type metric $d_T$. In both orientable and non-orientable case we provide quantitative estimates of $d_T([(M,g)],[(M',g')])$ via the operator norm of the difference $\Lambda'-\Lambda$. We also obtain generalizations of the results above to the case in which the Dirichlet-to-Neumann map is given only on a segment of the boundary.
\end{abstract}

\smallskip

\noindent{\bf Keywords:}\,\,\,electric impedance tomography of
surfaces, holomorphic immersions, Di\-rich\-let-to-Neumann map,
stability of determination, stability estimates, Teichm\"uller distance.

\smallskip

\noindent{\bf MSC:}\,\,\,35R30, 46J15, 46J20, 30F15.

\section{Introduction}
\label{sec intro}
Let us further agree that a {\it surface} is a two-dimensional compact smooth Riemann manifold with a smooth boundary. Let $M$ be a surface with boundary $\Gamma$ and metric tensor $g$. Denote the harmonic function in $(M,g)$ with trace $f$ on $\Gamma$ by $u^f$. The operator $\Lambda$ that maps $f$ to the normal derivative $\partial_{\nu}u^f$ of $u^f$ on $\Gamma$ is called the {\it Dirichlet-to-Neumann map} (DN map) of the surface $(M,g)$. The two-dimensional {\it Electric Impedance Tomography} (EIT) consists in the determination of unknown surface $(M,g)$ via its DN map $\Lambda$.

Let $(M,g)$ and $(M',g')$ be surfaces with joint boundary $\Gamma$ and the metrics $g$ on $M$ and $g'$ on $M'$ induce the same length element $dl$ on $\Gamma$. We write $[(M',g')]=[(M,g)]$ if there is a conformal map between $(M,g)$ and $(M',g')$ which does not move points of $\Gamma$. The well-known result of M.Lassas and G.Uhlmann \cite{LU} states that the DN maps of $(M,g)$ and $(M',g')$ coincide if and only if $(M',g')\sim (M,g)$. So, the DN map $\Lambda$ determines not the surface $(M,g)$ itself, but only its conformal class $[(M,g)]$. This conformal class is understood as a solution to the EIT problem.

The question on {\it stability of solutions} to the EIT problem is the following. Let $(M',g')$ and $(M,g)$ are surfaces with the joint boundary $(\Gamma,dl)$. Suppose that the DN map $\Lambda'$ of $(M',g')$ is close to the DN map $\Lambda$ of $(M,g)$ (in the operator norm). Can one claim that the surfaces $(M',g')$ and $(M,g)$ (resp., their conformal classes) are also close? Here the crucial moment is how to understand the closeness between the surfaces $(M',g')$ and $(M,g)$, or in other words, how to compare $(M',g')$ and $(M,g)$. 

As clarified in \cite{ZNS}, the surface topology is unstable under small perturbations of the DN map i.e. the surfaces with the different topologies may have arbitrarily close DN maps. For this reason, we discuss a closeness of the surfaces $(M',g')$ and $(M,g)$ under the assumption that they are diffeomorphic i.e. have the same Euler characteristic $\chi(M')=\chi(M)=m$. Next, in the approach of \cite{BKstab}, it is not the surfaces themselves that are compared to, but images of their holomorphic immersions in $\mathbb{C}^n$. The measure of closeness of the images is the Hausdorff distance between them which is the infimum of positive $r$ such that $r-$neighbourhood of the first image contains the second one and $r-$neighbourhood of the second image contains the first one. The result of \cite{BKstab} states that, if the surface $(M,g)$ is orientable and $\mathcal{E}$ is its holomorphic immersion to $\mathbb{C}^n$, then, for any $(M',g')$ diffeomorphic to $(M,g)$ and such that its DN map $\Lambda'$ is close to the DN map $\Lambda$ of $(M,g)$, there is some ``induced'' holomorphic embedding $\mathcal{E}_{M'}: \ (M',g')\to\mathbb{C}^n$ such that the images $\mathcal{E}_{M'}(M')$ and $\mathcal{E}(M)$ are close in Hausdorff distance. 

A natural stabiliy result in the orientable case is obtained by the author and Mikhail Belishev in \cite{BKTeich} of . Here, a natural measure of closeness between conformal classes $[(M,g)]$ and $[(M',g')]$ is the {\it Teichm\"uller distance} between them \cite{Alf,Ga,Nag}. Let us write $o(M)=+$ (resp. $o(M)=-$) if $M$ is orientable (resp. non-orientable). Put $[(M,g)]\in\mathscr{T}^{o}_{m,\Gamma}$ and $\Lambda\in\mathscr{D}^{o}_{m,\Gamma}$ if $(M,g)$ is a surface with the boundary $(\Gamma,dl)$, $o(M)=o$, and its Euler characteristic $\chi(M)$ is equal to $m$ while $\Lambda$ is the DN map of $(M,g)$. Let $[(M,g)],[(M',g')]\in \mathscr{T}^{o}_{m,\Gamma}$ and $\beta: \ M\to M'$ be a diffeomorphism. The {\it dilatation} $K_\beta(x)$ of the map $\beta$ at the point $x$ of $M$ is the square of the ratio between major and minor axis (in the metric $g$) of ellipse obtained by pulling back the circle (in the metric $g'$) in the tangent space $T_{\beta(x)}M'$. The number $K_\beta:=\sup_{x\in M} K_\beta(x)$ is called the dilatation of the map $\beta$. Note that the diffeomorphism $\beta: \ (M,g)\to (M',g')$ is conformal if and only if $K_\beta=1$ and, in such a case, we have $[(M,g)]=[(M',g')]$. Otherwise, the value of ${\rm log}K_\beta$ shows how far the map $\beta$ from being conformal. The Teichm\"uller distance between $[(M,g)]$ and $[(M',g')]$ is given by 
\begin{equation}
\label{Tdist}
d_{T}([(M,g)],[(M',g')]):=\frac{1}{2}\inf_\beta {\rm log}K_\beta,
\end{equation}
where the infimum is taken over all diffeomorphisms $\beta: \ M\to M'$ which do not move points of $\Gamma$. Note that, in the orientable case, all diffeomorphisms in (\ref{Tdist}) are assumed to be orientation-preserving. The distance $d_{T}$ is well-defined (i.e. $d_{T}([(M,g)],[(M',g')])$ does not depend on the choise of the representatives $(M,g)$, $(M',g')$ of conformal classes $[(M,g)]$, $[(M',g')]$) and it is actually the metric on $\mathscr{T}^{o}_{m,\Gamma}$. Endowed with this metrics, $\mathscr{T}^o_{m,\Gamma}$ is the natural space of solutions to the EIT problem. Note that, if $o=+$ and $\Gamma=\varnothing$, then $\mathscr{T}^o_{m,\Gamma}$ is a {\it Teichm\"uller modili space} (see, e.g., \cite{Alf,Ga,Nag}). At the same time, the space $\mathscr{D}^{o}_{m,\Gamma}$ of DN maps is endowed with the metrics 
\begin{equation}
\label{D op}
d_{op}(\Lambda',\Lambda):=\parallel\Lambda'-\Lambda\parallel_{H^{1}(\Gamma)\to L^{2}(\Gamma)}.
\end{equation}
We introduce the map $\mathscr{R}^{o}_{m,\Gamma}: \ \mathscr{D}^{o}_{m,\Gamma}\to \mathscr{T}^{o}_{m,\Gamma}$ which solves the EIT problem i.e. $\mathscr{R}^{o}_{m,\Gamma}(\Lambda)=[(M,g)]$ if and only if $\Lambda$ is DN map of $(M,g$). Then the result of \cite{BKTeich} states that the solving map $\mathscr{R}^{o}_{m,\Gamma}: \ (\mathscr{D}^{o}_{m,\Gamma},d_{op})\to (\mathscr{T}^{o}_{m,\Gamma},d_T)$ is {\it continuous} in the case $o=+$. The proof of this result is based on the result of \cite{BKstab} and consists in the construction of near-isometric diffeomorphism between the image $\mathcal{E}(M)$ of the holomorphic embedding of $(M,g)$ and the image $\mathcal{E}_{M'}(M')$ of the induced holomorphic embedding of $(M',g')$ which is close to $\mathcal{E}(M)$ in the Hausdorff distance as the DN map $\Lambda'$ of $(M',g')$ is close to the DN map $\Lambda$ of $(M,g)$. Such a consequence of the Hausdorff stability result of \cite{BKstab} is, in spirit, somewhat reminiscent of Gromov's theorem on the connection between Gromov-Hausdorff and Lipschitz distances for manifolds (see 8.19, \cite{G}).

In this paper, we generalize these stability results to the non-orientable case $o=-$. Namely, we prove the following statement.
\begin{theorem}
\label{main theorem}
The map $\mathscr{R}^{-}_{m,\Gamma}: \ (\mathscr{D}^{-}_{m,\Gamma},d_{op})\to (\mathscr{T}^{-}_{m,\Gamma},d_T)$ is continuous.
\end{theorem}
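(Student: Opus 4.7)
The plan is to reduce the non-orientable case to the orientable Teichm\"uller-stability result of \cite{BKTeich} by lifting to the canonical orientable Riemannian double cover and working $\mathbb Z/2$-equivariantly throughout. For a non-orientable $(M,g)$, let $\pi:(\tilde M,\tilde g)\to(M,g)$ be its orientable Riemannian double cover and $\tau$ its free, orientation-reversing deck isometry, so that $\tilde M$ is an orientable surface with $\chi(\tilde M)=2m$ and the boundary $\tilde\Gamma:=\pi^{-1}(\Gamma)$ is a disjoint union of two isometric copies of $(\Gamma,dl)$ interchanged by $\tau$. Make the analogous construction for $(M',g')$, obtaining $(\tilde M',\tilde g',\tau')$ with a canonical identification $\tilde\Gamma\cong\tilde\Gamma'$ once a sheet-labelling over $\Gamma$ is fixed. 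Any $\tau$-equivariant diffeomorphism $\tilde\beta:\tilde M\to\tilde M'$ fixing $\tilde\Gamma$ pointwise descends to a diffeomorphism $\beta:M\to M'$ fixing $\Gamma$ with $K_\beta=K_{\tilde\beta}$, so it suffices to produce such a $\tilde\beta$ whose dilatation tends to $1$ as $d_{op}(\Lambda',\Lambda)\to 0$.

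The next step is to invoke the non-orientable Hausdorff-stability result established earlier in the paper (the analogue of \cite{BKstab} for non-orientable surfaces). Fix a $\tau$-equivariant holomorphic immersion $\tilde{\mathcal E}:\tilde M\to\mathbb C^n$ intertwining $\tau$ with a prescribed antiholomorphic involution $\sigma$ of $\mathbb C^n$; for $d_{op}(\Lambda',\Lambda)$ small enough the Hausdorff-stability statement provides an induced $\tau'$-equivariant holomorphic immersion $\tilde{\mathcal E}_{M'}:\tilde M'\to\mathbb C^n$ (with the same $\sigma$) whose image $\tilde{\mathcal E}_{M'}(\tilde M')$ is Hausdorff-close to $\tilde{\mathcal E}(\tilde M)$. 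Then I would apply the $\mathbb C^n$-based near-isometric construction of \cite{BKTeich} --- local nearest-point projection between the two embedded surfaces, glued by a partition of unity and corrected by a local quasiconformal step --- to obtain a diffeomorphism $\tilde\beta:\tilde M\to\tilde M'$ with $\tilde\beta|_{\tilde\Gamma}=\mathrm{id}$ and $\log K_{\tilde\beta}\to 0$. Choosing all auxiliary data ($\sigma$-symmetric covering of a neighbourhood of the images, $\sigma$-symmetric partition of unity, \emph{etc.}) makes $\tilde\beta$ manifestly $\tau$-equivariant, so setting $\beta:=\pi'\circ\tilde\beta\circ\pi^{-1}$ yields the desired diffeomorphism and the estimate $d_T([(M,g)],[(M',g')])\leq\tfrac12\log K_\beta\to 0$.

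The main technical obstacle is enforcing the $\tau$-equivariance of $\tilde\beta$: the construction of \cite{BKTeich} rests on local choices (geodesic retractions, nearest-point projections, Whitney-type partitions of unity, conformal adjustments on patches) which do not automatically commute with $\sigma$. One route, sketched above, is to perform each choice $\sigma$-symmetrically from the outset, which requires re-examining the proof of \cite{BKTeich} step by step to verify that the resulting quantitative estimates survive this extra symmetry constraint. An alternative is to construct a not-necessarily-equivariant $\tilde\beta$ by the unmodified \cite{BKTeich} procedure, consider also $\tau'^{-1}\circ\tilde\beta\circ\tau$ (which has the same dilatation and boundary values), and symmetrise by solving a $\tau$-equivariant Beltrami equation whose coefficient is the $\tau$-average of those of the two candidates; the contraction of the Teichm\"uller disk in the Beltrami-coefficient ball then bounds the dilatation of the symmetrised map by a harmless multiplicative constant. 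Either route delivers the quantitative estimate, and hence Theorem~\ref{main theorem}.
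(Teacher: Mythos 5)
Your overall architecture --- lift to the orientable double cover, work with symmetric holomorphic embeddings intertwining the deck involution with complex conjugation, build a near-isometric map between the images and descend to $M\to M'$ --- is indeed the architecture of the paper, and your first route for enforcing equivariance (make every auxiliary choice $\sigma$-symmetric) is also how the paper handles it. However, you have misidentified what the hard part of the problem is, and the step you take for granted is precisely the step the paper has to work hardest to supply.

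You write that ``for $d_{op}(\Lambda',\Lambda)$ small enough the Hausdorff-stability statement provides an induced $\tau'$-equivariant holomorphic immersion $\tilde{\mathcal E}_{M'}$'' and attribute this to a ``non-orientable Hausdorff-stability result established earlier in the paper.'' No such result is available as a preliminary: \cite{BKstab} is proved only for orientable surfaces, and its hypotheses are stated in terms of the DN maps of the surfaces being embedded. In the present situation the surface being embedded is the cover $\tilde M$, but what you are given is the DN map $\Lambda$ of the base $M$, \emph{not} the DN map of $\tilde M$; and as the introduction points out, the latter cannot be recovered from the former. So you cannot simply feed the double covers into \cite{BKstab}. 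What \emph{is} available (Proposition~\ref{trace-DN-map connection} from \cite{BKor_SIAM}) is a characterisation of the boundary traces of symmetric holomorphic functions on $\tilde M$ as solutions of the \emph{nonlinear} equation $\mathcal G(f)=0$, where $\mathcal G$ is built from $\Lambda$ via~(\ref{numerator denominator})--(\ref{non-linear map}). The genuinely new technical content of this paper --- the entire Section~\ref{sec map iota}, including the abstract stability Lemma~\ref{main lemma} for positively-homogeneous maps $\mathcal G\in\mathfrak Q_{m,\alpha}(E;F)$ --- is precisely the proof that the null set $\mathcal G^{-1}(\{0\})$ is stable under small perturbations of $\Lambda$, which is what allows one to define the induced traces $\eta'_k$ and hence the induced embedding $\mathcal E_{M'}$. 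Your proposal skips this entirely; without it, there is no candidate $\tilde{\mathcal E}_{M'}$ to which the nearest-point machinery of \cite{BKTeich} could be applied. (It is also why the paper's estimate in~(\ref{stability estimate}) has a $t^{1/3}$ rather than the $t$ of the orientable case: the exponent comes from the cubic homogeneity of $\mathcal G$.) Your discussion of two routes to enforcing $\tau$-equivariance is sensible but secondary --- that part of the argument is comparatively routine once $\mathcal E_{M'}$ exists.
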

The proof of Theorem \ref{main theorem} is constructive. Namely, for any surface $(M',g')\in \mathscr{R}^{-}_{m,\Gamma}$ such that its DN map $\Lambda'$ is sufficiently close to the DN map $\Lambda$ of $(M,g)$, we construct the map $\beta_{M,M'}: \ M\to M'$ whose dilatation is subject to the estimate
\begin{equation}
\label{stability estimate}
{\rm log}K_{\beta_{M,M'}}\le c_{[(M,g)]}t^{\frac{1}{3}},
\end{equation}
where $t=\parallel\Lambda'-\Lambda\parallel_{H^{1}(\Gamma)\to L^{2}(\Gamma)}$ and the constant $c_{[(M,g)]}$ depends only on $[(M,g)]$. The scheme used in the construction of $\beta_{M,M'}$ is similar to that used in \cite{BKstab,BKTeich} but its implementation is much more complicated. The first obstacle is that the scheme of \cite{BKstab,BKTeich} is based on the holomorphic immersions of surfaces $(M,g)$ and $(M',g')$ while in the nonorientable case there are no holomorphic functions on $(M,g)$ and $(M',g')$. We avoid this obstacle by implementing the scheme of \cite{BKstab,BKTeich} not to the surfaces $(M,g)$ and $(M',g')$ themselves but to their orientable double covers $({\rm M},{\rm g})$ and $({\rm M}',{\rm g}')$. The second obstacle is that the DN maps of the covers $({\rm M},{\rm g})$ and $({\rm M}',{\rm g}')$ cannot be expressed explicitly in terms of DN maps $\Lambda$, $\Lambda'$ of the original surfaces $(M,g)$ and $(M',g')$. Nevertheless, due to the result of \cite{BKor_SIAM}, the boundary traces of holomorphic functions on $({\rm M},{\rm g})$ and $({\rm M}',{\rm g}')$ can still be determinated as solutions to certain non-linear equations including $\Lambda$ and $\Lambda'$, respectively. So, the key point in the generalization of the scheme of \cite{BKstab,BKTeich} to the nonorientable case is the study of stability of solutions to such class of non-linear equations. This study is given in Section \ref{sec map iota} while the complete proof of Theorem \ref{main theorem} is provided by Sections \ref{sec map iota}--\ref{sec map alpha}. Along with Theorem \ref{main theorem}, we obtain some estimates of continuity of $\mathscr{R}^{o}_{m,\Gamma}$ (see Proposition \ref{continuity estmates}) which are optimal in the orientable case $o=+$.

\section{Preliminaries}
\label{sec preliminaries} 
In this section, we present the facts and notions along with the results of \cite{BKor_SIAM} that are of use in the proof of Theorem \ref{main theorem}. 
\paragraph{Orientable double cover.}
Let us recall the notion and properties of {\it orientable double cover} (o.d.c.) of surface (for more details, see, e.g., \cite[Chapter 15]{Lee}). The o.d.c. of the surface $(M,g)$ is the triple $(({\rm M},{\rm g}),\pi,\tau)$, where the {\it covering space} ${\rm M}$ is an orientable surface, the {\it projection} $\pi: \ {\rm M}\to M$ is an unramified two-sheeted smooth map, and the {\it involution} $\tau$ is an automorphism of ${\rm M}$ that interchanges the points of ${\rm M}$ with joint projection. The metrics ${\rm g}$ on ${\rm M}$ is chosen in such a way that the projection $\pi$ is a local isometry, ${\rm g}:=\pi_{*}g$. Then $\tau$ is an isometric automorphism of $({\rm M},{\rm g})$. The o.d.c. of $(M,g)$ always and it is unique in the following sense: if $((\tilde{{\rm M}},\tilde{{\rm g}}),\tilde{\pi},\tilde{\tau})$ is another o.d.c. of $(M,g)$ then there is an isometric diffeomorphism $\phi: \ ({\rm M},{\rm g})\to (\tilde{{\rm M}},\tilde{{\rm g}})$ such that $\pi=\tilde{\pi}\circ\phi$ and $\phi\circ\tau=\tilde{\tau}\circ\phi$. Note that the covering space ${\rm M}$ is connected if and only if $M$ is non-orientable; otherwise, it is diffeomorphic to the disjoint union of two copies of $M$. In particular, since the boundary $\Gamma$ of $M$ is orientable, the boundary $\partial {\rm M}$ of ${\rm M}$ consists of two connected components $\partial {\rm M}_\pm$ and the restriction of $\pi$ on each $\partial {\rm M}_\pm$ is an isometric diffeomorphism. The Euler characteristic $\chi({\rm M})$ of the covering space ${\rm M}$ is equal to twice the Euler characteristic $\chi(M)$ of $M$.

\smallskip

The orientation on $({\rm M},{\rm g})$ is fixed by choosing one of two smooth families $\Phi=\{\Phi_x\}_{x\in {\rm M}}$, where each $\Phi_x\in {\rm End}T_x{\rm M}$ is a rotation on the right angle in the tangent space i.e.
\begin{equation}
\label{rotation}
{\rm g}(\Phi_xa,\Phi_xb)={\rm g}(a,b), \quad {\rm g}(\Phi_xa,a)=0, \qquad \forall a,b\in T_x{\rm M}, \ x\in M
\end{equation}
obeying
\begin{equation}
\label{rotation symmetry}
\tau^{*}\Phi=-\Phi\tau^*.
\end{equation}
Note that (\ref{rotation}) implies 
\begin{equation*}
\Phi^2=-I,
\end{equation*}
where $I$ is the identity. Also, if $M$ is non-orientable, then ${\rm M}$ is connected and (\ref{rotation symmetry}) follows from (\ref{rotation}) and the continuity of the family $\Phi$. 

\smallskip

The boundary $\partial {\rm M}$ of ${\rm M}$ is oriented by the unit tangent vector field $\Phi\nu$, where $\nu$ is an outward normal to $\partial {\rm M}$. Note that $\tau^*\nu=\nu$ and, due to (\ref{rotation symmetry}), $\tau^*\gamma=-\gamma$. Let $\Upsilon:=\Gamma\times\{+,-\}$ be a union of two copies $\Gamma_\pm:=\Gamma\times\{\pm\}$ of $\Gamma$ endowed with the involution $\sigma$ acting by $\sigma(x,\pm):=(x,\mp)$ for $x\in\Gamma$. In what follows, we consider $\Gamma$ as a part $\Gamma_+$ of $\Upsilon$. We fix the orientation on $\Upsilon$ by the unit tangent vector field $\gamma$ obeying $\sigma^*\gamma=-\gamma$. Then the map $\varpi: \ \partial {\rm M}\to \Upsilon$ acting by the rule $\varpi(x):=(\pi(x),\pm)$ on $\partial{\rm M}_\pm$ is an isometric diffeomorphism and the rotation $\Phi$ on ${\rm M}$ can be chosen in such a way that $\varpi$ is orientation preserving i.e. $\varpi^*[\Phi\nu]=\gamma$. For any covering space considered in the paper, we agree this choice of rotation and identify $\partial {\rm M}$ and $\Upsilon$ by the rule $\varpi(x)\equiv x$. Under this agreements, we have $\Phi\nu\equiv\gamma$ and $\sigma\equiv\tau$.

\paragraph{Harmonic fields.} Let $L_2({\rm M},{\rm g})$ and $\vec{L}_2({\rm M},{\rm g})$ be the spaces of square integrable functions and vector fields on $({\rm M},{\rm g})$ endowed with the inner products
$$(u,v)_{L_2({\rm M},{\rm g})}:=\int_{\rm M}|u|^2 dS_{\rm g}, \qquad (a,b)_{\vec{L}_2({\rm M},{\rm g})}:=\int_{\rm M}{\rm g}(a,b)dS_{\rm g},$$
respectively. The field $a\in\vec{L}_2({\rm M},{\rm g})$ is called {\it potential} if $a=\nabla u$ and {\it harmonic} if ${\rm div}_{\rm g}a={\rm div}_{\rm g}\Phi a=0$ holds in ${\rm int}\,{\rm M}$. Denote the sets of all harmonic fields by $\mathcal{H}$ and the set of all harmonic potential fields by $\mathcal{Q}$. Note that $\mathcal{H}$ and $\mathcal{Q}$ are (closed) subspaces of $\vec{L}_2({\rm M},{\rm g})$. Denote the orthogonal complement of $\mathcal{Q}$ in $\mathcal{H}$ by $\mathcal{N}$. The dimension of $\mathcal{N}$ is finite and it is determined by the Euler characteristic $\chi({\rm M})$ of ${\rm M}$ by 
\begin{equation}
\label{defect spaces dim 0}
{\rm dim}\mathcal{N}=1-\chi({\rm M})
\end{equation}
(see, e.g., \cite{Sch}). Also, the elements of $\mathcal{N}$ are smooth on ${\rm M}$. Denote the length element on $\Upsilon\equiv\partial{\rm M}$ induced by the metrics ${\rm g}$  by $dl$. In view of the divergence theorem, the equality
$$\int_\Upsilon u\,{\rm g}(a,\nu)dl=(\nabla u,a)_{\vec{L}_2({\rm M},{\rm g})}$$
is valid for any $u\in C^{\infty}({\rm M})$ and $a\in \mathcal{H}$. In particular, the field $a\in \mathcal{H}$ belongs to $\mathcal{N}$ if and only if it is tangent on $\Upsilon$. Also, denote $\mathcal{D}:=\Phi\mathcal{N}$; then $a\in \mathcal{H}$ belongs to $\mathcal{D}$ if and only if it is normal on $\Upsilon$. For $a\in \mathcal{N}$ and $b\in \mathcal{D}$ denote ${\rm Tr}_\gamma a:={\rm g}(a,\gamma)|_{\Gamma_+}$ and ${\rm Tr}_\nu b:={\rm g}(b,\nu)|_{\Gamma_+}$, respectively. Note that the linear operators ${\rm Tr}_\gamma: \ \mathcal{N}\to{\rm Tr}_\gamma\mathcal{N}$ and ${\rm Tr}_\gamma: \ \mathcal{D}\to{\rm Tr}_\nu\mathcal{D}$ are isomorphisms. Indeed, if ${\rm Tr}_\gamma a=0$ (${\rm Tr}_\nu b=0$) on any segment $\tilde{\Upsilon}$ of non-zero length of $\Upsilon$, then $a=0$ ($b=0$) on $\tilde{\Upsilon}$ and $a=0$ ($b=0$) on ${\rm M}$ due to the uniqueness of continuation of solutions to first-order elliptic systems.

\smallskip

In view of (\ref{rotation}), the rotation $\Phi$ is unitary operator on $\vec{L}_2({\rm M})$ obeying $\Phi H=H$. Also, since the involution $\tau$ is an isometric automorphism of $({\rm M},{\rm g})$, its differential $\tau_{*}$ is a unitary involution on $\vec{L}_2({\rm M})$ preserving potential fields, $\tau_{*}\nabla u=\nabla (u\circ\tau)$. Hence, in view of (\ref{rotation symmetry}), one has $\tau_{*}\mathcal{H}=\mathcal{H}$, $\tau_{*}\mathcal{Q}=\mathcal{Q}$, and $\tau_{*}\mathcal{N}=\mathcal{N}$, $\tau_{*}\mathcal{D}=\mathcal{D}$. We say that $a\in\mathcal{H}$ is symmetric (anti-symmetric) with respect to the involution $\tau$ and write $a=a_+$ ($a=a_-$) if $\tau_{*}a=a$ ($\tau_{*}a=-a$). The symmetric and anti-symmetric fields are orthogonal in $\vec{L}_2({\rm M})$ since ${\rm g}(a_+,a_-)={\rm g}(\tau_{*}a_+,\tau_{*}a_-)={\rm g}(a_+,-a_-)$. For any $a\in\vec{L}_2({\rm M})$ there is the unique decomposition $a=a_++a_-$, where $a_+=(a+\tau_*a)/2$ is symmetric and $a_-=(a-\tau_*a)/2$ is anti-symmetric. In particular, one has $\mathcal{N}=\mathcal{N}_+\oplus\mathcal{N}_-$ and $\mathcal{D}=\mathcal{D}_+\oplus\mathcal{D}_-$, where the indices $+$ and $-$ denote the corresponding subspaces of symmetric and anti-symmetric elements, respectively. Also, formula (\ref{rotation symmetry}) implies $\Phi\mathcal{N}_\pm=\mathcal{D}_\mp$ and 
\begin{equation}
\label{defect spaces dim 1}
{\rm dim}\mathcal{N}_\pm={\rm dim}\mathcal{D}_\mp.
\end{equation}
Let $P_\mathcal{N}$ and $P_\mathcal{D}$ be projections in $\vec{L}_2({\rm M},{\rm g})$ onto $\mathcal{N}$ and $\mathcal{D}$, respectively. The operators $\mathcal{P}_\mathcal{N}:=P_\mathcal{N}|_\mathcal{D}: \ \mathcal{D}\to \mathcal{N}$ and $\mathcal{P}_\mathcal{D}:=P_\mathcal{D}|_\mathcal{N}: \ \mathcal{N}\to \mathcal{D}$ are mutually conjugate since 
$$(\mathcal{P}_\mathcal{D}a,b)_{\vec{L}_2({\rm M},{\rm g})}=(a,b)_{\vec{L}_2({\rm M},{\rm g})}=(a,\mathcal{P}_\mathcal{N}b)_{\vec{L}_2({\rm M},{\rm g})} \qquad (a\in\mathcal{N}, \ b\in\mathcal{D}).$$
Let $a\in\mathcal{D}$. Due to decomposition $\mathcal{H}=\mathcal{Q}\oplus\mathcal{N}$, we have $a=\nabla y_a+\mathcal{P}_\mathcal{N}a$, where $y_a$ is a smooth harmonic function on ${\rm M}$. Since $\tau_*a$ is again an element of $\mathcal{D}$ and $\tau_*a=\tau_*\nabla y_a+\tau_*\mathcal{P}_\mathcal{N}a=\nabla(y_a\circ\tau)+\tau_*\mathcal{P}_\mathcal{N}a$, we obtain $\tau_*\mathcal{P}_\mathcal{N}a=\mathcal{P}_\mathcal{N}\tau_* a$. Also, $\partial_{\nu}y_a={\rm g}(a,\nu)$ since the field $\mathcal{P}_\mathcal{N}$ is tangent on $\Upsilon$. Therefore, $\mathcal{P}_\mathcal{N}a=a-\nabla y_a$, where $y_a$ is a solution to the Neumann problem 
$$\Delta_{\rm g}y_a=0 \text{ in } {\rm M}, \qquad \partial_\nu y_a={\rm g}(a,\nu) \text{ on } \Upsilon$$ 
determined up to an additive constant. In particular, $\mathcal{P}_\mathcal{N}a=0$ if and only if the field $a=\nabla y_a$ is normal on $\Upsilon$ i.e. $y_a$ is constant on each component $\Gamma_\pm$ of $\Upsilon$. So, we obtain ${\rm Ker}\mathcal{P}_\mathcal{N}=\{{\rm const}\nabla \phi_0\}$, where $\phi_0$ is a harmonic function equal to $\pm 1$ on $\Gamma_{\pm}$. In addition, we have proved that $\tau_*\mathcal{P}_\mathcal{N}=\mathcal{P}_\mathcal{N}\tau_*$ and, hence, $\mathcal{P}_\mathcal{N}\mathcal{D}_\pm\subset\mathcal{N}_\pm$. Since $\phi_0\circ\tau=-\phi_0$, we have ${\rm Ker}\mathcal{P}_\mathcal{N}\subset\mathcal{D}_-$. In view of these facts, we obtain
\begin{equation}
\label{defect spaces dim 2}
{\rm dim}\mathcal{N}_+\ge {\rm dim}\mathcal{D}_+, \qquad {\rm dim}\mathcal{N}_-\ge {\rm dim}\mathcal{D}_- - 1.
\end{equation}
Now, let $b\in\mathcal{N}$. Due to decomposition $\mathcal{H}=\Phi\mathcal{H}=\Phi\mathcal{Q}\oplus\Phi\mathcal{N}=\Phi\mathcal{Q}\oplus\mathcal{D}$, we have $b=\Phi\nabla q_b+\mathcal{P}_\mathcal{D}b$, where $y_b$ is a smooth harmonic function on ${\rm M}$. Note that $\tau_*b\in\mathcal{N}$ and $\tau_*b=\tau_*\Phi\nabla q_b+\tau_*\mathcal{P}_\mathcal{D}b=\Phi\nabla(-y_a\circ q_b)+\tau_*\mathcal{P}_\mathcal{D}b$ due to (\ref{rotation symmetry}). Thus, $\tau_*\mathcal{P}_\mathcal{D}b=\mathcal{P}_\mathcal{D}\tau_* b$. Also, $\partial_{\nu}q_b={\rm g}(\nabla q_b,\nu)={\rm g}(\Phi\nabla q_b,\Phi\nu)={\rm g}(\Phi\nabla q_b,\gamma)={\rm g}(b,\gamma)$ since the field $\mathcal{P}_\mathcal{D}b$ is normal on $\Upsilon$. Therefore, $\mathcal{P}_\mathcal{D}b=b-\Phi\nabla q_b$, where $q_b$ is a solution to the Neumann problem 
$$\Delta_{\rm g}q_b=0 \text{ in } {\rm M}, \qquad \partial_\nu q_b={\rm g}(b,\gamma) \text{ on } \Upsilon$$ 
determined up to an additive constant. In particular, $\mathcal{P}_\mathcal{D}b=0$ if and only if the field $b=\Phi\nabla q_b$ is tangent on $\Upsilon$ i.e. $q_b$ is constant on each $\Gamma_\pm$. So, we obtain ${\rm Ker}\mathcal{P}_\mathcal{D}=\{{\rm const}\Phi\nabla \phi_0\}$. In addition, we have proved that $\tau_*\mathcal{P}_\mathcal{D}=\mathcal{P}_\mathcal{D}\tau_*$, whence $\mathcal{P}_\mathcal{D}\mathcal{N}_\pm\subset\mathcal{D}_\pm$. Note that $\tau_*\Phi\nabla \phi_0=-\Phi\nabla(\phi_0\circ\tau)=\Phi\nabla\phi_0$ in view of (\ref{rotation symmetry}), and, thus, ${\rm Ker}\mathcal{P}_\mathcal{D}\subset\mathcal{N}_+$. Hence,
\begin{equation}
\label{defect spaces dim 2a}
{\rm dim}\mathcal{D}_+\ge {\rm dim}\mathcal{N}_+-1, \qquad {\rm dim}\mathcal{D}_-\ge {\rm dim}\mathcal{N}_-.
\end{equation}
Now, combining (\ref{defect spaces dim 0}),(\ref{defect spaces dim 1}),(\ref{defect spaces dim 2}),(\ref{defect spaces dim 2a}) with the equality $\chi({\rm M})=2\chi(M)$ yields
\begin{equation}
\label{defect spaces dim 3}
{\rm dim}\mathcal{N}_-={\rm dim}\mathcal{D}_+=-\chi(M), \qquad {\rm dim}\mathcal{N}_+={\rm dim}\mathcal{D}_-=1-\chi(M).
\end{equation}

\paragraph{Holomorphic functions.}
A smooth function $w$ on $({\rm M},{\rm g})$ is called {\it holomorphic} if its real and imaginary parts ${\rm u}:=\Re w$, ${\rm v}:=\Im w$ satisfy the Cauchy-Riemann equation $\nabla v=\Phi\nabla u$ on ${\rm M}$. Then $u,v$ are harmonic: $\Delta_{\rm g}{\rm u}=\Delta_{\rm g}{\rm v}=0$ in ${\rm int}{\rm M}$. Note that the rotation $\Phi$ induce the complex structure (a biholomorphic atlas) on ${\rm M}$ (see, e.g., \cite{Chirka}). With respect to this structure, $w$ is holomorphic in the usual sense. In the subsequent, $\mathcal{H}({\rm M})$ denotes the space of holomorphic smooth functions on $({\rm M},{\rm g})$. Of course, the definitions of the rotation (\ref{rotation}) and holomorphic functions remain valid for $M$ if $M$ is orientable. 

\smallskip

Due to the symmetry (\ref{rotation symmetry}) of the covering space $({\rm M},{\rm g})$, the map $ w\mapsto  w^*:=\overline{ w\circ\tau}$ is an involution on the space of holomorphic functions. The holomorphic function $ w$ is called {\it symmetric} if $ w^*= w$. Each holomorphic $w$ can be represented as
\begin{equation}
\label{graduation}
w=w_++i w_-, \text{ where } w_+:=\frac{ w+ w^*}{2}=w_+^*, \qquad w_-:=\frac{ w- w^*}{2i}=w_-^*.
\end{equation}
In the subsequent, the sub-space of symmetric elements of $\mathcal{H}({\rm M})$ is denoted by by $\mathcal{H}_+({\rm M})$. Let $w$ be an element of $\mathcal{H}_+({\rm M})$; then its real part obeys $\Re\,w\circ\tau=\Re\,w$, and, therefore, it can be rewritten as $\Re\,w=u\circ\pi$, where $u$ is a harmonic smooth function on $M$. Conversely, let $u^f$ is a smooth harmonic function on $M$ with the trace $f$ on $\Gamma$. Due to decomposition $\mathcal{H}=\mathcal{Q}\oplus\mathcal{N}$, we have 
$$\Phi\nabla(u^f\circ\pi)=\nabla v+a,$$ 
where $v$ is a harmonic smooth function on ${\rm M}$ and $a\in\mathcal{N}$. Then, $u^f\circ\pi$ is a real part of some function $w\in \mathcal{H}_+({\rm M})$ if and only if $a=0$. Also,
\begin{align*}
\Phi\nabla(u^f\circ\pi)&=\Phi\nabla(u^f\circ\pi\circ\tau)=\Phi\tau_*\nabla(u^f\circ\pi)=\\
=&-\tau_*\Phi\nabla(u^f\circ\pi)=-\tau_*\nabla v-\tau_*a=-\nabla(v\circ\tau)-\tau_*a
\end{align*}
due to (\ref{rotation symmetry}). Comparing the last two formulas yields $v\circ\tau=-v$ and $\tau_*a=-a$ i.e. $a\in\mathcal{N}_-$. Let $b$ be any element of $\mathcal{N}_-$ (i.e. $d=-\Phi b$ be any element of $\mathcal{D}_+$). Since $\nabla v\perp b$, we have 
\begin{align*}
(a,b)_{\Vec{L}_2({\rm M},{\rm g})}=(\Phi\nabla(u^f\circ\pi),b)_{\Vec{L}_2({\rm M},{\rm g})}=(\nabla(u^f\circ\pi),d)_{\Vec{L}_2({\rm M},{\rm g})}=\\=\int_\Upsilon f\circ\pi\cdot{\rm g}(d,\nu)dl=2\int_\Gamma f\cdot {\rm Tr}_\nu d\,dl
\end{align*}
due to (\ref{rotation}) and the divergence theorem. As a result, we obtain
\begin{equation}
\label{real part of hol sym func}
u^f\circ\pi\in \Re\,\mathcal{H}_+({\rm M}) \ \Longleftrightarrow \ \int_\Gamma f\tilde{d}dl=0 \quad \forall\tilde{d}\in{\rm Tr}_\nu \mathcal{D}_+.
\end{equation}

\smallskip

Denote the restriction of the trace map $w\mapsto  w|_{\partial{\rm M}}$ on the space of holomorphic functions by ${\rm Tr}$; then ${\rm Tr}$ is a bijection between holomorphic functions and their boundary traces. Also, the {\it generalized argument principle} holds: if $ w$ and $\tilde{w}$ are holomorphic and $z\in\mathbb{C}\backslash  w(\partial{\rm M})$, then 
\begin{equation}
\label{generalized argument principle}
\frac{1}{2\pi i}\int\limits_{\partial{\rm M}}\tilde{\eta}\frac{d\eta}{\eta-z}=\sum_{\bf x} \tilde w(x){\rm ord}_x( w),
\end{equation}
where $\eta:={\rm Tr} w$, $\tilde{\eta}:={\rm Tr} \tilde{w}$, ${\rm ord}_x( w)$ is the order of the zero $x$ of $ w$, and the summation is taken over all zeroes of $ w$ in ${\rm int}{\rm M}$. Equality (\ref{generalized argument principle}) is a result of the application of the Stokes theorem (see Theorem 3.16, \cite{M}) and the residue theorem (see Lemma 3.12, \cite{M}) to the meromorphic 1-form $\tilde{w}\ \! dw/(w-z)$. If $\tilde w=1$, then (\ref{generalized argument principle}) becomes the usual argument principle 
\begin{equation}
\label{argument principle}
{\rm wind}(\eta,z)={\rm ord}(w-z),
\end{equation}
where ${\rm wind}(\eta,z)$ is the winding number of the curve $\eta: \ \Gamma\to \mathbb{C}$ around the point $z$ and ${\rm ord}(w)$ is the total multiplicity of zeroes of $w$ in ${\rm int}{\rm M}$.

\paragraph{Determining the traces of holomorphic functions on ${\rm M}$ via DN map $\Lambda$.} In \cite{BKor_SIAM} it has been proved that the space of boundary traces of holomorphic functions on the covering space ${\rm M}$ is determined by the DN map $\Lambda$ of the (non-orientable) surface $M$. In this paragraph, we present a detailed formulation of this result.

\smallskip

Let $\partial_\gamma$ be the derivative with respect to the length on $\Upsilon$ in the direction $\gamma$. In this paragraph, we consider $\partial_\gamma$ as a linear operator acting on $C^{\infty}(\Gamma;\mathbb{R})$. Note that $\partial_{\gamma}C^{\infty}(\Gamma;\mathbb{R})$ is the space of smooth functions with zero mean values on $(\Gamma,dl)$ and $\Lambda C^{\infty}(\Gamma;\mathbb{R})=\partial_{\gamma}C^{\infty}(\Gamma;\mathbb{R})$. The integration $J$ is the operator inverse to $\partial_{\gamma}$ on $\partial_{\gamma}C^{\infty}(\Gamma;\mathbb{R})$. Introduce the linear operator $\mathfrak{D}$ and the non-linear map $\mathfrak{N}$ acting on $C^{\infty}(\Gamma;\mathbb{R})$ by the following rules
\begin{align}
\label{numerator denominator}
\begin{split}
\mathfrak{N}(f):&=\frac{1}{2}\Lambda[f^2-(J\Lambda f)^2]-f\Lambda f-(J\Lambda f)\partial_\gamma f,\\
\mathfrak{D}f:&=(\partial_{\gamma}+\Lambda J\Lambda)f.
\end{split}
\end{align}
Also, denote the function equal to $\pm 1$ on $\Gamma_\pm$ by $\sigma$.
\begin{prop}[See Lemma 1, \cite{BKor_SIAM} and Theorem 2.3, \cite{BKor_JIIPP}]
\label{trace-DN-map connection}
The following statements are valid:
\begin{enumerate}[a{\rm)}]
\item Let $f\in C^{\infty}(\Gamma;\mathbb{R})$ be non-constant and $\tilde{\Gamma}$ be arbitrary segment of $\Gamma$ of non-zero length. Then $\mathfrak{D}f=0$ on $\tilde{\Gamma}$ if and only if $M$ is orientable, and $f+iJ\Lambda f$ is a trace on $\Gamma$ of holomorphic function on $M$.

\item Suppose that $M$ is non-orientable. The function $\eta:=f+ih$ with $f,h\in C^{\infty}(\Gamma;\mathbb{R})$ is a trace on $\Gamma\equiv\Gamma_+$ of symmetric holomorphic function on ${\rm M}$ if and only if there is a constant $c\in\mathbb{R}$ such that $\mathfrak{N}(f)=c\mathfrak{D}f$ and $h=\sigma (J\Lambda f)\circ\pi+c$ hold on $\Upsilon$.
\end{enumerate}
\end{prop}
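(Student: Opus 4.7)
Both assertions follow from rewriting the Cauchy--Riemann equation $\nabla v=\Phi\nabla u$ (for $w=u+iv$) as a boundary compatibility using the identities $\Phi\nu=\gamma$, $\Phi\gamma=-\nu$, $\partial_\nu u^f=\Lambda f$ and $\partial_\gamma u^f=\partial_\gamma f$ on $\Gamma$.

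For part (a), if $w=u^f+iv$ is holomorphic, Cauchy--Riemann at the boundary yields $\partial_\gamma v=\partial_\nu u^f=\Lambda f$ (so $v|_\Gamma=J\Lambda f$ up to an additive constant) and $\partial_\nu v=-\partial_\gamma u^f=-\partial_\gamma f$; applying $\Lambda$ to the trace gives $\Lambda J\Lambda f=-\partial_\gamma f$, i.e.\ $\mathfrak{D}f\equiv 0$ on $\Gamma$. Conversely, a zero of $\mathfrak{D}f$ on a segment of non-zero length extends to all of $\Gamma$ by unique continuation for the first-order elliptic pseudodifferential operator $\partial_\gamma+\Lambda J\Lambda$; once this holds globally, the pair $(u^f,u^{J\Lambda f})$ satisfies the boundary form of Cauchy--Riemann, and assembling it into a holomorphic function on $M$ is possible exactly when the rotation $\Phi$ exists globally on $M$, i.e.\ when $M$ is orientable. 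The non-orientable case is ruled out by lifting to ${\rm M}$ and observing that the would-be conjugate fails to respect the involution $\tau$ (its defect lies in $\mathcal{N}_-$ rather than in the potential subspace).

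For part (b), work on the orientable double cover. A symmetric holomorphic $w=U+iV$ necessarily has $\Re w=u^f\circ\pi$ with $V$ anti-symmetric, so by (\ref{real part of hol sym func}) the existence of such $w$ is equivalent to $\int_\Gamma f\,\tilde d\,dl=0$ for every $\tilde d\in{\rm Tr}_\nu\mathcal{D}_+$. Once $w$ exists, applying the boundary computation of part (a) on ${\rm M}$ gives $\partial_\gamma V=\Lambda_{\rm M}(f\circ\pi)$ on $\Upsilon$; since $f\circ\pi$ is $\tau$-symmetric, $\Lambda_{\rm M}(f\circ\pi)=(\Lambda f)\circ\pi$, and integrating $V$ anti-symmetrically yields $V=\sigma\cdot(J\Lambda f)\circ\pi+c$ on $\Upsilon$ for some constant $c$, which is exactly the claimed identity for $h=V|_{\Gamma_+}$.

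The remaining task, and the main obstacle, is to translate the orthogonality $\int_\Gamma f\,\tilde d\,dl=0$ on ${\rm Tr}_\nu\mathcal{D}_+$ into the scalar equation $\mathfrak{N}(f)=c\mathfrak{D}f$. I would apply the real-part criterion to $w^2$, which is again symmetric holomorphic with real trace $f^2-(J\Lambda f+c)^2$ on $\Gamma_+$; expanding $\Lambda[f^2-(J\Lambda f)^2]$ via the Leibniz rule $\partial_\nu(uv)=u\partial_\nu v+v\partial_\nu u$ together with the boundary Cauchy--Riemann identities produces exactly $\mathfrak{N}(f)$ on the left-hand side and a multiple of $\mathfrak{D}f$ with coefficient $c$ on the right. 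The hardest aspect is carefully tracking every product and integration-by-parts term and confirming that the constant $c$ is precisely the additive ambiguity of the harmonic conjugate, which corresponds to the one-dimensional kernel $\{{\rm const}\cdot\nabla\phi_0\}$ of $\mathcal{P}_{\mathcal N}$ identified earlier.
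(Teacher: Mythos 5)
The paper does not prove this proposition; it cites it from \cite{BKor_SIAM} and \cite{BKor_JIIPP}, so there is no internal proof to match against. Judged on its own terms, your sketch has the right ingredients but two genuine gaps.

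For part (a), the necessity direction (boundary Cauchy--Riemann forces $\mathfrak{D}f=0$) is correct, but your invocation of ``unique continuation for the first-order elliptic pseudodifferential operator $\partial_\gamma+\Lambda J\Lambda$'' is not a legitimate tool: $\mathfrak{D}$ is nonlocal, and there is no general unique-continuation principle for pseudodifferential operators on a curve. The correct route is to relate $\mathfrak{D}f$ to a \emph{harmonic field}. When $M$ is orientable, set $b:=\Phi\nabla u^f-\nabla u^{J\Lambda f}$; then ${\rm g}(b,\gamma)|_\Gamma=\Lambda f-\partial_\gamma J\Lambda f=0$ automatically, while ${\rm g}(b,\nu)|_\Gamma=-\partial_\gamma f-\Lambda J\Lambda f=-\mathfrak{D}f$. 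Thus $\mathfrak{D}f=0$ on $\tilde\Gamma$ makes $b$ vanish on $\tilde\Gamma$, and unique continuation for the first-order elliptic system satisfied by harmonic fields (which the paper already uses elsewhere) gives $b\equiv 0$, hence $f+iJ\Lambda f\in{\rm Tr}\,\mathcal{H}(M)$. When $M$ is non-orientable one carries out the same computation for $b:=\Phi\nabla(u^f\circ\pi)-\nabla(u^{J\Lambda f}\circ\pi)$ on the connected cover ${\rm M}$: on $\Gamma_+$ one still gets ${\rm g}(b,\gamma)=0$ and ${\rm g}(b,\nu)=-\mathfrak{D}f$, but on $\Gamma_-$ the orientation reversal of $\pi$ gives ${\rm g}(b,\gamma)=2(\Lambda f)\circ\pi$. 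So if $\mathfrak{D}f=0$ on $\tilde\Gamma\subset\Gamma_+$, unique continuation forces $b\equiv 0$ on ${\rm M}$, and then the $\Gamma_-$ identity yields $\Lambda f=0$, i.e.\ $f$ constant. This is exactly how the orientability dichotomy is established; your phrase about the ``defect lying in $\mathcal{N}_-$'' gestures at it but does not supply the argument.

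For part (b), the $w^2$ computation is the right device and indeed reproduces $\mathfrak{N}(f)=c\,\mathfrak{D}f$: with $\Re w^2=f^2-(J\Lambda f+c)^2$ and $\Im w^2=2f(J\Lambda f+c)$ on $\Gamma_+$, the boundary identity $\partial_\gamma\Im w^2=\Lambda\,\Re w^2$ expands to $2\mathfrak{N}(f)=2c\,\mathfrak{D}f$, so your claim that ``every product and integration-by-parts term'' works out is correct. However this only gives necessity. The converse --- that $\mathfrak{N}(f)=c\,\mathfrak{D}f$ together with $h=\sigma(J\Lambda f)\circ\pi+c$ implies the orthogonality $\int_\Gamma f\tilde d\,dl=0$ for all $\tilde d\in{\rm Tr}_\nu\mathcal{D}_+$ (the criterion (\ref{real part of hol sym func})) --- is not addressed in your plan at all, and it is the substantive direction. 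As written, the sketch proves one implication in each part and leaves both converses open.
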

Let us comment Proposition \ref{trace-DN-map connection}. The statement {\it a}) provides the criterion of orientability of the surface $M$ which is obtained in \cite{BKor_JIIPP}. In the case of orientable $M$, it also provides the criterion for $\eta$ to be a trace on $\Gamma$ of a holomorphic function on $M$, which has been first obtained in \cite{B}. Due to this criterion, the boundary traces of holomorphic functions on $M$ can be founded as solutions to the linear system  $\partial_\gamma\Im\eta=\Lambda\Re\eta$, $\partial_\gamma\Re\eta=-\Lambda\Im\eta$ obtained by restriction of the Cauchy-Riemann equation on the boundary $\Gamma$. In contrast to this, in the non-orientable case, the traces of symmetric holomorphic functions on ${\rm M}$ are founded by solving the {\it non-linear} equation $\mathfrak{N}(f)={\rm const}\mathfrak{D}f$ provided by {\it b}). In this case, the statement {\it a}) implies that $\mathfrak{D}f$ does not vanish on any segment of $\Gamma$ of non-zero length for any nonconstant $f$ and the ratio 
\begin{equation}
\label{ratio}
c_f:=\mathfrak{N}(f)/\mathfrak{D}f.
\end{equation}
is well-defined (for constant fuctions $f$, we set $c_f=0$). The statement {\it b}) is proved in \cite{BKor_SIAM}. Together with (\ref{graduation}), it shows that the set of traces of holomorphic functions on $\partial {\rm M}\equiv\Upsilon$ is determined by $\Lambda$. 

\smallskip

For more convenience, we rewrite {\it b}) in the following form. Introduce the non-linear map $\mathcal{G}$ acting on functions from $C^{\infty}(\Gamma;\mathbb{R})$ by 
\begin{equation}
\label{non-linear map}
\mathcal{G}(f):=\mathfrak{D}f\cdot\partial_{\gamma}\mathfrak{N}(f)-\mathfrak{N}(f)\partial_{\gamma}\mathfrak{D}f=(\mathfrak{D}f)^2\partial_{\gamma}\Big(\frac{\mathfrak{N}(f)}{\mathfrak{D}f}\Big).
\end{equation}
\begin{prop} 
\label{trace-DN-map connection 1}
Let $M$ be non-orientable. Then the following statements are valid:
\begin{enumerate}[a{\rm)}]
\item the map $\mathcal{G}$ is positive homogeneous of degree 3 i.e. $\mathcal{G}(cf)=c^3\mathcal{G}(f)$ for any $f\in C^{\infty}(\Gamma;\mathbb{R})$ and $c>0$.
\item The nonconstant function $f\in C^{\infty}(\Gamma;\mathbb{R})$ satisfies $\mathcal{G}(f)=0$ if and only if $f\circ\pi+i\sigma((J\Lambda f)\circ\pi+c_f)$ is a trace on $\partial{\rm M}\equiv\Upsilon$ of symmetric holomorphic function on ${\rm M}$. In particular, the null set $\mathcal{G}^{-1}(\{0\})$ of $\mathcal{G}$ is a linear space and $f\mapsto c_f$ is a linear functional on $\mathcal{G}^{-1}(\{0\})$.
\item The codimension of $\mathcal{G}^{-1}(\{0\})$ in $C^{\infty}(\Gamma;\mathbb{R})$ is finite and it is equal to $-\chi(M)$.
\end{enumerate}
\end{prop}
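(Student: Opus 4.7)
The plan is to dispatch the three parts in order, with (b) and (c) building on the preliminary results.

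For part (a), I would argue by a direct homogeneity check: since $\Lambda$, $J$, and $\partial_\gamma$ are real-linear, inspection of (\ref{numerator denominator}) shows that each of the four summands in $\mathfrak{N}(f)$ is a product of exactly two factors linear in $f$, giving $\mathfrak{N}(cf)=c^2\mathfrak{N}(f)$, while $\mathfrak{D}(cf)=c\,\mathfrak{D}f$. Substituting into the definition (\ref{non-linear map}) of $\mathcal{G}$ yields $\mathcal{G}(cf)=c^3\mathcal{G}(f)$ for every real $c$.

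For part (b), I would exploit the factorisation $\mathcal{G}(f)=(\mathfrak{D}f)^2\partial_\gamma(\mathfrak{N}(f)/\mathfrak{D}f)$ already recorded in (\ref{non-linear map}). Proposition~\ref{trace-DN-map connection}(a), applied to non-orientable $M$, ensures that $\mathfrak{D}f$ vanishes on no arc of positive length for nonconstant $f$, so $\mathcal{G}(f)=0$ is equivalent to $\mathfrak{N}(f)/\mathfrak{D}f$ being a constant $c=c_f$ with $\mathfrak{N}(f)=c\,\mathfrak{D}f$. This is exactly the hypothesis of Proposition~\ref{trace-DN-map connection}(b), and invoking that statement produces the claimed equivalence with being the boundary trace of a symmetric holomorphic function on ${\rm M}$. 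Linearity of $\mathcal{G}^{-1}(\{0\})$ and of $f\mapsto c_f$ then follows from the real linear structure of $\mathcal{H}_+({\rm M})$: if $w_j\in\mathcal{H}_+({\rm M})$ realises $f_j$ with constant $c_{f_j}$, then $w_1+w_2$ realises $f_1+f_2$ with constant $c_{f_1}+c_{f_2}$, and uniqueness of $c_{(\cdot)}$ (from nonvanishing of $\mathfrak{D}$) forces $c_{f_1+f_2}=c_{f_1}+c_{f_2}$. Separately, one checks by direct computation that $\mathfrak{N}$ and $\mathfrak{D}$ are invariant under $f\mapsto f+\mathrm{const}$ (using $\Lambda(\mathrm{const})=0$) and that constant $f$ lies in $\mathcal{G}^{-1}(\{0\})$ with $c_f:=0$, closing the linearity argument.

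For part (c), I would combine (b) with the criterion (\ref{real part of hol sym func}): $f\in\mathcal{G}^{-1}(\{0\})$ is equivalent to $u^f\circ\pi\in\Re\,\mathcal{H}_+({\rm M})$, which by (\ref{real part of hol sym func}) amounts to the system of linear conditions $\int_\Gamma f\,\tilde d\,dl=0$ for every $\tilde d\in{\rm Tr}_\nu\mathcal{D}_+$. These conditions are automatic for constant $f$, since the $\tau$-symmetry and harmonicity of fields in $\mathcal{D}_+$ force $\int_{\Gamma_+}\tilde d\,dl=0$ by Stokes. Thus $\mathcal{G}^{-1}(\{0\})$ is the annihilator in $C^{\infty}(\Gamma;\mathbb{R})$ of the finite-dimensional subspace ${\rm Tr}_\nu\mathcal{D}_+\subset C^{\infty}(\Gamma;\mathbb{R})$; its codimension equals $\dim{\rm Tr}_\nu\mathcal{D}_+=\dim\mathcal{D}_+$ (since ${\rm Tr}_\nu:\mathcal{D}\to{\rm Tr}_\nu\mathcal{D}$ is an isomorphism by unique continuation, as recalled in the preliminaries), and (\ref{defect spaces dim 3}) identifies this with $-\chi(M)$.

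The main technical obstacle I anticipate is the book-keeping for linearity in (b): one must pin down the additive ambiguity in the harmonic conjugate $\Im\,w$ of the symmetric holomorphic extension by using $\Im\,w\circ\tau=-\Im\,w$ together with connectedness of ${\rm M}$ (which holds since $M$ is non-orientable), and then verify that the canonical constant extracted in this way coincides with the quotient (\ref{ratio}). All remaining steps reduce to the direct computations of (a), a careful application of Proposition~\ref{trace-DN-map connection}, and the dimension formulas (\ref{defect spaces dim 3}) already established in the preliminaries.
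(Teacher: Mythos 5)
Your proposal is correct and follows essentially the same route as the paper: direct homogeneity computation for (a), reduction to Proposition~\ref{trace-DN-map connection}(b) via non-vanishing of $\mathfrak{D}f$ for (b) with linearity traced back to the real-linear structure of ${\rm Tr}\,\mathcal{H}_+({\rm M})$ and uniqueness of the constant $c_f$, and combination of (b) with (\ref{real part of hol sym func}) and (\ref{defect spaces dim 3}) for (c). You merely supply more of the detail (the Stokes check for constants, the annihilator/codimension bookkeeping) that the paper leaves implicit.
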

\begin{proof}
The statement {\it a}) follows from definitions (\ref{numerator denominator}) and (\ref{non-linear map}) while {\it b}) is equivalent to Proposition \ref{trace-DN-map connection}, {\it b}). The linearity of $f\mapsto c_f$ on $\mathcal{G}^{-1}(\{0\})$ follows from the fact that ${\rm Tr}\mathcal{H}_+({\rm M})$ is a real linear space and $c_f$ is a unique constant which makes $f\circ\pi+i\sigma((J\Lambda f)\circ\pi+c_f)$ an element of ${\rm Tr}\mathcal{H}_+({\rm M})$. Finally, {\it c}) is a corollary of {\it b}) and formulas (\ref{defect spaces dim 3}), (\ref{real part of hol sym func}).
\end{proof}

\paragraph{Holomorphic embeddings.} Denote the $k$-th component of the point $\xi\in\mathbb{C}^n$ by $\xi_k$ and introduce the component-wise complex conjugation in $\mathbb{C}^n$ by $\overline{\xi}:=(\overline{\xi}_1,\dots,\overline{\xi}_n)$. Consider the differentiable map
\begin{equation*}
\label{embedding}
\mathcal{E}: \ {\rm M}\to\mathbb{C}^n, \quad x\mapsto \{w_1(x),\dots,w_n(x)\}
\end{equation*}
and denote $\eta_k:=w_k|_\Upsilon$. Recall that $\mathcal{E}$ is an embedding if its differential $d\mathcal{E}$ is injective everywhere on ${\rm M}$ and the map $\mathcal{E}: \ {\rm M}\mapsto \mathcal{E}({\rm M})$ is a homeomorphism. In this case, the surface $\mathcal{E}({\rm M})\subset\mathbb{C}^n$ is endowed with the metrics induced by the standard metrics on $\mathbb{C}^n$.

We say that the embedding $\mathcal{E}$ is {\it holomorphic} if its components $w_k$ are holomorphic on ${\rm M}$. The crucial fact is that, in this case, the map $\mathcal{E}: \ {\rm M}\mapsto \mathcal{E}({\rm M})$ is conformal (see, e.g., Lemma 3, \cite{BKTeich}). Also, the image $\mathcal{E}({\rm M})$ does not depend on the choice of the surface representing the conformal class $[(M,g)]$ or its covering space, but only on $[(M,g)]$ and the choice of the traces $\eta_k$. Indeed, let $(M,g)$ and $(\tilde{M},\tilde{g})$ be surfaces, $(({\rm M},{\rm g)}),\pi,\tau)$ and $((\tilde{\rm M},\tilde{\rm g}),\tilde{\pi},\tilde{\tau})$ be their o.d.c.-s, and $\mathcal{E}=\{w_1,\dots,w_n\}$ and $\tilde{\mathcal{E}}=\{\tilde{w}_1,\dots,\tilde{w}_n\}$ be their holomorphic embeddings, respectively. By the agreement above, $\partial {\rm M}\equiv \partial \tilde{\rm M}\equiv\Upsilon$. Suppose that $[(\tilde{M},\tilde{g})]=[(M,g)]$ and $\tilde{w}_k|_\Upsilon=w_k|_\Upsilon$. Due to the first equality, there is a conformal diffeomorphism $\beta_0: (M,g)\to (\tilde{M},\tilde{g})$ which does not move points of $\Gamma=\partial M=\partial \tilde{M}$. The lifting of the map $\beta_0$ on the covering spaces provides the conformal diffeomorphism $\beta: \ ({\rm M},{\rm g}) \to (\tilde{\rm M},\tilde{\rm g})$ obeying $\tilde{\pi}\circ\beta=\beta_0\circ\pi$ and $\beta(x)=x$ for $x\in\Upsilon$. By composing $\beta$ with the involution $\tau$ if needed, one makes $\beta$ orientation-preserving and, hence, a biholomorphism. Then the functions $w_k$ and $\tilde{w}_k\circ\beta$ are holomorphic on ${\rm M}$ and do coincide on $\Upsilon$. In view of the uniqueness of holomorphic continuation, we have $w_k=\tilde{w}_k\circ\beta$ and, hence, $\tilde{\mathcal{E}}(\tilde{\rm M})=\mathcal{E}({\rm M})$. 

\smallskip

For $\hat{\xi}\in\mathbb{C}^n$, introduce the projection $\mathfrak{p}_{\hat{\xi}}: \ \mathbb{C}^n\to\mathbb{C}$ and the functions $w_{\hat{\xi}}: \ {\rm M}\to\mathbb{C}$ and $\eta_{\hat{\xi}}: \ \Upsilon\to\mathbb{C}$ by 
\begin{equation}
\label{projection}
\mathfrak{p}_{\hat{\xi}}\xi:=\sum_{j=1}^n \hat{\xi}_j \xi_j, \qquad w_{\hat{\xi}}:=\mathfrak{p}_{\hat{\xi}}\circ\mathcal{E}=\sum_{j=1}^n \hat{\xi}_j w_j, \qquad \eta_{\hat{\xi}}:=w_{\hat{\xi}}|_\Upsilon=\sum_{j=1}^n \hat{\xi}_j \eta_j.
\end{equation}
The pre-image $\Pi_{\hat{\xi},\mathcal{D}}:=\mathfrak{p}_{\hat{\xi}}^{-1}(\mathcal{D})$ of the domain $\mathcal{D}\subset\mathbb{C}$ is a cylinder in $\mathbb{C}^n$. We say that the cylinder $\Pi_{\hat{\xi},\mathcal{D}}$ is projective for $\mathcal{E}({\rm M})$ if it has non-empty intersection with $\mathcal{E}({\rm M})$ and the restriction of $\mathfrak{p}_{\hat{\xi}}$ on $\Pi_{\hat{\xi},\mathcal{D}}\cap\mathcal{E}({\rm M})$ is an embedding. The embedding $\mathcal{E}$ is called {\it projective} if any $\xi\in \mathcal{E}({\rm M})$ belongs to some projective cylinder. Due to the divisor theorem (Theorem 26.5, \cite{Forster}, see also Proposition 2, \cite{BKstab}), each holomorphic projective embeddings (of sufficiently large dimension) do exist for o.d.c. of any surface $(M,g)$.

\smallskip

The crucial property of the holomorphic projective embeddings is that their images are determined by boundary traces of their components via the generalized argument principle (\ref{generalized argument principle}). Indeed, let $\mathcal{E}=\{w_1,\dots,w_n\}$ be a holomorphic projective embedding. Let us say that the pair $(\hat{\xi},z)$ with $\xi\in\mathbb{C}^{n}$ and $z\in\mathbb{C}$ is admissible if ${\rm wind}(\eta_{\hat{\xi}},z)=1$ or, equivalently, if $w_{\hat{\xi}}-z$ has a unique and simple zero $x\in {\rm M}$. Then formula (\ref{generalized argument principle}) with $w=w_{\hat{\xi}}$, $\tilde{w}=w_k$ provides the point $\xi:=\mathcal{E}(x)$ of $\mathcal{E}({\rm int}{\rm M})$ with coordinates
\begin{equation}
\label{GAP application}
\frac{1}{2\pi i}\int\limits_{\Upsilon}\eta_{k}\frac{d\eta_{\hat{\xi}}}{\eta_{\hat{\xi}}-z}=w_k(x)=\xi_k, \qquad k=1,\dots,n.
\end{equation}
By applying (\ref{GAP application}) for all admissible $(\hat{\xi},z)$, one finds $\mathcal{E}({\rm int}{\rm M})$. Indeed, any $\xi=\mathcal{E}(x)\in \mathcal{E}({\rm int}{\rm M})$ is contained in some projective cylinder $\Pi_{\hat{\xi},\mathcal{D}}$. The restriction of $w_{\hat{\xi}}$ on the neighbourhood $w_{\hat{\xi}}^{-1}(\mathcal{D})$ of $x$ is a composition of the embeddings $\mathcal{E}$ and $\mathfrak{p}_{\hat{\xi}}|_{\mathcal{E}({\rm M})\cap\Pi_{\hat{\xi},\mathcal{D}}}$. Hence, $x$ is a unique and simple zero of $w_{\hat{\xi}}-z$, where $z:=w_{\hat{\xi}}(x)$. Therefore, $(\hat{\xi},z)$ is admissible and, hence, $\xi$ can be founded by using (\ref{GAP application}).

\smallskip

We say that the embedding $\mathcal{E}=\{w_1,\dots,w_n\}$ of the covering space ${\rm M}$ is {\it symmetric} if its components are symmetric with respect to the involution $\tau$ of ${\rm M}$ i.e. $w_k^*=w_k$. In this case, we have $\mathcal{E}\circ\tau=\overline{\mathcal{E}}$ and, hence, $\overline{\mathcal{E}({\rm M})}=\mathcal{E}({\rm M})$. If $\mathcal{E}=\{w_1,\dots,w_n\}$ is a projective holomorphic embedding of ${\rm M}$ into $\mathbb{C}^n$, then $\mathcal{E}=\{w_{1+},\dots,w_{n+},w_{1-},\dots,w_{n-}\}$ (with $w_{\cdot,\pm}$ defined by (\ref{graduation})) is a symmetric holomorphic projective embedding of ${\rm M}$ to $\mathbb{C}^{2n}$. In the rest of the paper, we deal with {\it symmetric holomorphic projective embeddings} only and call them `embeddings' for short. 

\paragraph*{Outline of the proof and the sketch of the paper.} In Sections \ref{sec map iota}--\ref{sec map alpha}, the following notation is accepted. First, $(M,g)$ is an arbitrarily fixed non-orientable surface with the boundary $(\Gamma,dl)$ and the DN map $\Lambda$. We write $(M',g')\in\mathbb{B}(M,t)$ if $(M',g')$ is a surface diffeomorphic to $(M,g)$, $\partial M'=\Gamma$, the length element on $\Gamma$ induced by $g'$ does coincide with $dl$, and the DN map $\Lambda'$ of $(M',g')$ is subject to the estimate 
\begin{equation}
\label{small parameter}
\|\Lambda-\Lambda'\|_{H^{1}(\Gamma;\mathbb{R})\to L_2(\Gamma;\mathbb{R})}\le t
\end{equation}
with small $t>0$. The objects associated with the surface $(M,g)$ are denoted by unprimed symbols, while objects associated with the surface $(M',g')\in\mathbb{B}(M,t)$ are denoted by primed symbols. For example, $({\rm M},{\rm g})$ and $({\rm M}',{\rm g}')$ are the covering spaces of $(M,g)$ and $(M',g')$, respectively, $\mathcal{H}_+({\rm M})$ and $\mathcal{H}_+({\rm M}')$ are the space of symmetric holomorphic smooth functions on them, and so on.

To prove Theorem \ref{main theorem}, it suffices to construct, for any $M'$ above, the diffeomorphism $\beta_{M,M'}$ between $(M,g)$ and $(M',g')$ which is near-conformal for small $t$ (i.e. its dilatation tends to $1$ as $t\to 0$ uniformly with respect to $(M',g')$). The construction of $\beta$ consists of the following steps.

In Section \ref{sec map iota}, we construct the family $\{\iota_{M,M'}\}_{(M',g')\in \Xi(M,t_0)}$ ($t_0>0$) of real-linear maps $\iota_{M,M'}: \ \mathcal{H}_+({\rm M})\to \mathcal{H}_+({\rm M}')$ with the property
\begin{equation}
\label{iota map}
{\rm Tr}'\iota_{M,M'}w\underset{t\to 0}{\longrightarrow} {\rm Tr}w \text{ in } C^{\infty}(\Upsilon;\mathbb{C}) \qquad \forall w\in \mathcal{H}_+({\rm M}).
\end{equation}
where the convergence is uniform with respect to $(M',g')$. In view of the uniqueness of holomorphic continuation and Proposition \ref{trace-DN-map connection 1}, the construction is reduced to the study of stability of solutions to the equation $\mathcal{G}(\eta)=0$ under small perturbations of the DN map $\Lambda$ which defines $\mathcal{G}$ via (\ref{non-linear map}).

In Section \ref{sec embeddings}, we consider the embedding $\mathcal{E}:=\{w_1,\dots,w_n\}$ of the covering space ${\rm M}$ and the {\it induced embedding} $\mathcal{E}_{\rm M'}:=\{w'_1,\dots,w'_n\}$ of the covering space ${\rm M}'$ which is connected with $\mathcal{E}$ by $w'_k:=\iota_{M,M'}w_k$. By using the generalized argument principle (\ref{GAP application}), we show that the images $\mathcal{E}({\rm M})$ and $\mathcal{E}_{\rm M'}({\rm M}')$ are close in $\mathbb{C}^n$ (in Hausdorff distance) for small $t$.

In Section \ref{sec map alpha}, we consider the map $\alpha_{{\rm M},{\rm M}'}: \ \mathcal{E}({\rm M})\to \mathcal{E}_{\rm M'}({\rm M}')$ which sends the point $\xi$ of $\mathcal{E}({\rm M})$ to the nearest point $\xi'=\alpha_{{\rm M},{\rm M}'}(\xi)$ of $\mathcal{E}_{\rm M'}({\rm M}')$. After slight modification near the boundary $\mathcal{E}(\Upsilon)$ of $\mathcal{E}({\rm M})$, the map $\alpha_{{\rm M},{\rm M}'}$ becomes a near-isometric diffeomorphism between $\mathcal{E}({\rm M})$ and $\mathcal{E}_{\rm M'}({\rm M}')$. Since the images $\mathcal{E}({\rm M})$ and $\mathcal{E}_{\rm M'}({\rm M}')$ are conformally equivalent to ${\rm M}$ and ${\rm M}'$, respectively, the map $\alpha_{{\rm M},{\rm M}'}$ provides the near-conformal diffeomorphism $\beta_{{\rm M},{\rm M}'}$ between ${\rm M}$ and ${\rm M}'$. Due to the symmetry of the covering spaces ${\rm M}$ and ${\rm M}'$ and their embeddings, the map $\beta_{{\rm M},{\rm M}'}$ provides the required near-conformal diffeomorphism $\beta_{M,M'}$ between $M$ and $M'$. By this, we complete the proof of Theorem \ref{main theorem}.

In Section \ref{sec generalizations}, we describe the generalizations of the stability result of Theorem \ref{main theorem} for orientable surfaces with multicomponent boundaries. In this case, their DN maps are defined only on the external components of the boundaries. The scheme of proofs is basically the same as described in Sections \ref{sec map iota}--\ref{sec map alpha}. For this reason, we describe only the required modifications in the reasoning.

\section{Connection between spaces of holomorphic functions}
\label{sec map iota}
In this subsection, the operator $\iota_{M,M'}: \ \mathcal{H}_+({\rm M})\to \mathcal{H}_+({\rm M}')$ obeying (\ref{iota map}) is constructed for small $t>0$ and any $(M',g')\in\mathbb{B}(M,t)$. To this end, we consider the non-linear maps $\mathcal{G}$ the non-linear maps $\mathcal{G}$ and $\mathcal{G}'$ related by formula (\ref{non-linear map}) with the DN maps $\Lambda$ and $\Lambda'$ of $(M,g)$ and $(M',g')$, respectively. We write $\mathcal{G}'\in \mathbb{B}_{\mathcal{G}}(M,t)$ if the corresponding surface $(M',g')$ belongs to $\mathbb{B}(M,t)$. In this case, $\Lambda'$ is close to $\Lambda$ in the operator norm and one can consider $\mathcal{G}'$ as small perturbation of $\mathcal{G}$. First, we study the stability of solutions to the non-linear equation $\mathcal{G}(f)=0$ under such perturbations. The key tool in this study is Lemma \ref{main lemma}. As a result, we construct the family $\mathfrak{Y}_{\mathcal{G},\cdot}$ of {\it linear} maps 
$$\mathfrak{Y}_{\mathcal{G},\mathcal{G}'}: \ \mathcal{G}^{-1}(\{0\})\to \mathcal{G}^{'-1}(\{0\})$$
between the spaces of solutions to $\mathcal{G}(f)=0$ and $\mathcal{G}'(f')=0$ which satisfies 
\begin{equation}
\label{C infty convergence}
\mathfrak{Y}_{\mathcal{G},\mathcal{G}'}(f)\underset{t\to 0}{\longrightarrow} f \text{ in } C^{\infty}(\Gamma;\mathbb{R}) \qquad \forall f\in C^{\infty}(\Gamma;\mathbb{R}),
\end{equation}
where the convergence is uniform with respect to $\mathcal{G}'\in \mathbb{B}_{\mathcal{G}}(M,t)$. Next, for each element $\eta:=f\circ\pi+i\sigma((J\Lambda f)\circ\pi+c_f)$ of ${\rm Tr}\mathcal{H}_+({\rm M})$, we put
\begin{align}
\label{iota trace map 1}
\tilde{\iota}_{M,M'}\eta:=f'\circ\pi'\pm i\sigma((J\Lambda'f')\circ\pi'+c'_{f'}, \\
\label{iota trace map 2}
f':=\mathfrak{Y}_{\mathcal{G},\mathcal{G}'}(f), \qquad c'_{f'}:=\mathfrak{N}'(f')/\mathfrak{D}'f'
\end{align}
(as before, we put $c'_{f'}=0$ in the case $f={\rm const}$). In view of Proposition \ref{trace-DN-map connection 1}, formulas (\ref{iota trace map 1}), (\ref{iota trace map 2}) define the (real) linear map $\tilde{\iota}_{M,M'}: \ {\rm Tr}\mathcal{H}_+({\rm M})\to {\rm Tr}\mathcal{H}_+({\rm M}')$. By using (\ref{C infty convergence}), we prove the (uniform with respect to $(M',g')\in\mathbb{B}(M,t)$) convergence 
\begin{equation}
\label{C infty convergence 1}
\tilde{\iota}_{M,M'}\eta\underset{t\to 0}{\longrightarrow}\eta \text{ in } C^{\infty}(\Gamma;\mathbb{C}) \qquad \forall \eta\in {\rm Tr}\mathcal{H}_+({\rm M}).
\end{equation}
Finally, by using to the uniqueness of holomorphic continuation, we define the map $\iota_{M,M'}:={\rm Tr}^{'-1}\tilde{\iota}_{M,M'}{\rm Tr}$. Then the property (\ref{iota map}) follows from (\ref{C infty convergence 1}).

\paragraph{Preliminary estimates.}
In this paragraph, we show that the maps $\mathcal{N}'$, $\mathcal{D}'$, $\mathcal{G}'$, $c'_\cdot$ associated with surfaces $(M',g')\in\mathbb{B}(M,t)$, are close to the maps $\mathcal{N}$, $\mathcal{D}$, $\mathcal{G}$, $c_\cdot$, respectively, for small $t$.
\begin{lemma}
\label{preliminary estimates lemma}
Let $(M,g)$ be a non-orientable surface with the boundary $(\Gamma,dl)$ and DN map $\Lambda$. Then, for any $l=0,1,\dots$, the following statements are valid:
\begin{enumerate}[a{\rm)}]
\item The maps $\mathcal{N}$, $\mathcal{D}$ associated with $\Lambda$ via {\rm(\ref{numerator denominator})} can be extended to continuous operators acting from $C^{l+3}(\Gamma;\mathbb{R})$ to $C^{l}(\Gamma;\mathbb{R})$. The map $\mathcal{G}$ defined by {\rm (\ref{non-linear map})}, can be extended to a continuous operator acting from $C^{l+4}(\Gamma;\mathbb{R})$ to $C^{l}(\Gamma;\mathbb{R})$.
\item For sufficiently small $t\in(0,t_0(M,g)$ and any surface $(M',g')\in\mathbb{B}(M,t)$, the estimates 
\begin{equation}
\label{closeness of NDG}
\begin{split}
\|J\Lambda'f'-J\Lambda f\|_{C^{l}(\Gamma;\mathbb{R})}\le c(t\|f\|_{C^{l+2}(\Gamma;\mathbb{R})}+\|f'-f\|_{C^{l+2}(\Gamma;\mathbb{R})}),\\
\|\mathfrak{N}'(f)-\mathfrak{N}(f)\|_{C^{l}(\Gamma;\mathbb{R})}\le ct\|f\|^2_{C^{l+3}(\Gamma;\mathbb{R})},\\
\|\mathfrak{D}'f-\mathfrak{D}f\|_{C^{l}(\Gamma;\mathbb{R})}\le ct\|f\|_{C^{l+3}(\Gamma;\mathbb{R})},\\
\|\mathcal{G}'(f)-\mathcal{G}(f)\|_{C^{l}(\Gamma;\mathbb{R})}\le ct\|f\|_{C^{l+4}(\Gamma;\mathbb{R})}^{3}.
\end{split}
\end{equation}
are valid for all $f\in C^{\infty}(\Gamma;\mathbb{R})$. Here $\mathcal{N}'$,$\mathcal{D}'$, and $\mathcal{G}'$ are the maps associated with the DN map $\Lambda'$ of $(M',g')$ via {\rm(\ref{numerator denominator})} and {\rm (\ref{non-linear map})}. The constants in {\rm(\ref{closeness of NDG})} depend only on $(M,g)$ and $l$.
\item For any non-constant $f\in \mathcal{G}^{-1}(\{0\})$, the convergence
\begin{equation}
\label{coeff convergence}
\sup |c'_{f'}-c_f|\underset{t\to 0}{\longrightarrow} 0,
\end{equation}
holds, $c_f$ and $c'_{f'}$ are defined by {\rm (\ref{ratio})} and {\rm (\ref{iota trace map 2})}, respectively, and the supremum is taken over all $(M',g')\in\mathbb{B}(M,t)$ and all $f'\in \mathcal{G}^{'-1}(\{0\})$ obeying $\|f'-f\|_{C^{3}(\Gamma;\mathbb{R})}\le t$.
\end{enumerate}
\end{lemma}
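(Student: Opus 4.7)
The three parts will be established in order. Parts (a) and (c) are essentially bookkeeping; the heart of the lemma is the four estimates of (b).

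For (a), I would invoke the standard fact that the DN map of a smooth compact Riemannian surface with smooth boundary is a classical pseudodifferential operator of order $1$ on $\Gamma$, hence bounded from $C^{k+s}(\Gamma;\mathbb{R})$ to $C^{k}(\Gamma;\mathbb{R})$ with some finite loss $s$ independent of $k$. Combined with the trivial mapping properties of $J$ (integration of a mean-zero function, gaining one derivative) and $\partial_\gamma$ (losing one), and with the Leibniz rule applied to the products in \eqref{numerator denominator} and \eqref{non-linear map}, one reads off the claimed boundedness $C^{l+3}\to C^l$ for $\mathfrak{N},\mathfrak{D}$ and $C^{l+4}\to C^l$ for $\mathcal{G}$ by plain derivative counting.

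For (b), the strategy is to decompose every difference into two types of terms: those in which $(\Lambda-\Lambda')$ acts on a fixed smooth function produced from $f$ by $J,\partial_\gamma$, and products; and those in which already-controlled operators act on $f'-f$ (with possible extra factors of $(\Lambda-\Lambda')$ giving $O(t^2)$ corrections). The second type is immediately bounded by part (a) together with $\|f'-f\|_{C^{l+\mathrm{const}}}$. The first type reduces to establishing the key propagation estimate
\begin{equation*}
\|(\Lambda-\Lambda')g\|_{C^l(\Gamma;\mathbb{R})} \le c(l,M,g)\,t\,\|g\|_{C^{l+2}(\Gamma;\mathbb{R})} \qquad \forall g\in C^{\infty}(\Gamma;\mathbb{R}).
\end{equation*}
This is where the hypothesis $\|\Lambda-\Lambda'\|_{H^{1}(\Gamma;\mathbb{R})\to L_{2}(\Gamma;\mathbb{R})}\le t$ enters. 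The idea is that both DN maps are first-order pseudodifferential operators on $\Gamma$ whose principal symbols coincide, being determined solely by the common length element $dl$ on $\Gamma$; hence $\Lambda-\Lambda'$ is in fact of order zero on the one-dimensional manifold $\Gamma$, and pseudodifferential calculus together with Sobolev embeddings propagates the $H^1\to L_2$ operator bound into the $C^l$-scale with a bounded loss of derivatives. This propagation is the only non-routine step and is where I expect the main technical obstacle to lie. Granted this key estimate, the four inequalities in \eqref{closeness of NDG} follow by Leibniz-type algebra: the first directly, the second and third by applying the product rule to \eqref{numerator denominator}, and the last by differentiating these via \eqref{non-linear map}. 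The cubic growth in the $\mathcal{G}$-bound just tracks the cubic homogeneity of $\mathcal{G}$ in $f$ recorded in Proposition~\ref{trace-DN-map connection 1}, a).

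For (c), since $f$ is non-constant and lies in $\mathcal{G}^{-1}(\{0\})$, Proposition~\ref{trace-DN-map connection}, a) and the remarks following it imply that $\mathfrak{D}f$ cannot vanish on any sub-arc of $\Gamma$ of positive length; in particular, one can pick $x_0\in\Gamma$ with $\mathfrak{D}f(x_0)\ne 0$. The estimates of (b) applied with $l=0$ yield $\mathfrak{D}'f'(x_0)\to\mathfrak{D}f(x_0)$ uniformly over $(M',g')\in\mathbb{B}(M,t)$ and $f'$ with $\|f'-f\|_{C^3}\le t$, so the denominator stays bounded away from zero for all sufficiently small $t$. Writing
\begin{equation*}
c'_{f'}-c_f \;=\; \frac{\mathfrak{N}'(f')(x_0)\,\mathfrak{D}f(x_0)\;-\;\mathfrak{N}(f)(x_0)\,\mathfrak{D}'f'(x_0)}{\mathfrak{D}'f'(x_0)\,\mathfrak{D}f(x_0)},
\end{equation*}
and bounding the numerator by the second and third inequalities of \eqref{closeness of NDG} applied to $f$ and $f'$ (together with $\|f'-f\|_{C^3}\le t$), one obtains \eqref{coeff convergence} uniformly, completing the proof.
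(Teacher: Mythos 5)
Your proposal is correct and follows essentially the same route as the paper: part (a) by derivative counting using that $\Lambda$ is a first--order $\Psi$DO, part (b) by reducing everything to the key propagation estimate $\|\Lambda'-\Lambda\|_{C^{l+2}\to C^{l}}\le c_l t$ plus Leibniz algebra, and part (c) by writing $c'_{f'}-c_f$ as a single fraction at a point where the denominator is bounded below and invoking (b). The one place your justification differs in flavor is the key $\Psi$DO estimate: you argue that the principal symbols of $\Lambda$ and $\Lambda'$ coincide (both being determined by the common $dl$), so $\Lambda'-\Lambda$ is of order zero, whereas the paper asserts the equivalence of the operator norms $\|\cdot\|_{H^l\to H^{l-1}}$ across $l$ on the class of first-order $\Psi$DOs and then uses Sobolev embeddings; both are standard $\Psi$DO facts leading to the same estimate, and your correct identification of this as the only non-routine step mirrors the fact that this is precisely where the paper provides the least detail.
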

\begin{proof}
It is established in \cite{LeeU} that any DN map is a first order pseudo-differential operator. On the space of such operators, the equivalence of the norms 
$$\|\cdot\|_{H^{l}(\Gamma;\mathbb{R})\to H^{l-1}(\Gamma;\mathbb{R})}\asymp\|\cdot\|_{H^{l'}(\Gamma;\mathbb{R})\to H^{l'-1}(\Gamma;\mathbb{R})}$$ 
(where $H^{l}(\Gamma;\mathbb{R})$ denotes the Sobolev space on the curve $(\Gamma,dl)$) is valid for each $l,l'\in\mathbb{N}$. Also, the embeddings $C^{l}(\Gamma;\mathbb{R})\subset H^{l}(\Gamma;\mathbb{R})$ and $H^{l+1}(\Gamma;\mathbb{R})\subset C^{l}(\Gamma;\mathbb{R})$ are continuous for each $l\in\mathbb{N}$. Due to the facts above, for each $l=0,1,\dots$, the DN map $\Lambda$ can be extended to a continuous operator acting from $C^{l+2}(\Gamma;\mathbb{R})$ to $C^{l}(\Gamma;\mathbb{R})$ and condition (\ref{small parameter}) provides the estimate
\begin{equation}
\label{small parameter 1}
\|\Lambda'-\Lambda\|_{C^{l+2}(\Gamma;\mathbb{R})\to C^{l}(\Gamma;\mathbb{R})}\le c_l t,
\end{equation}
which is valid for any DN map $\Lambda'$ of a surface $(M',g')\in\mathbb{B}(M,t)$. Now, {\it a}) and {\it b}) follow from (\ref{small parameter 1}) and the facts that, for each $l=0,1,\dots$, the differentiation $\partial_\gamma$ acts continuously from $C^{l+1}(\Gamma;\mathbb{R})$ to $C^{l}(\Gamma;\mathbb{R})$, the integration $J$ is a continuous operator acting from $\partial_\gamma C^{l+1}(\Gamma;\mathbb{R})\subset C^{l}(\Gamma;\mathbb{R})$ to $C^{l+1}(\Gamma;\mathbb{R})$, and the multiplication $f,h\to fh$ is continuous on $C^{l}(\Gamma;\mathbb{R})$, $\|fh\|_{C^{l}(\Gamma;\mathbb{R})}\le c\|f\|_{C^{l}(\Gamma;\mathbb{R})}\|h\|_{C^{l}(\Gamma;\mathbb{R})}$. It remains to prove {\it c}). Let $\gamma_0$ be a point in which $|\mathfrak{D}f|$ attains maximum on $\Gamma$. In view of (\ref{ratio}) and (\ref{iota trace map 2}), we have 
\begin{equation}
\label{non-small denominator}
\sup|c'_{f'}-c_{f}|=\sup\frac{|[\mathfrak{N}(f)](\gamma_0)[\mathfrak{D}'f'](\gamma_0)-[\mathfrak{N}'(f')](\gamma_0)[\mathfrak{D}f](\gamma_0)|}{|[\mathfrak{D}f](\gamma_0)[\mathfrak{D}'f'](\gamma_0)|}
\end{equation}
(here and in the subsequent, the supremum is taken over the same set as in (\ref{coeff convergence})). Due to the continuity of $\mathfrak{N}$, we have ${\rm sup}|[\mathfrak{N}(f')](\gamma_0)-[\mathfrak{N}(f)](\gamma_0)|\to 0$ as $t\to 0$ while (\ref{closeness of NDG}) implies $\sup |[\mathfrak{N}(f')](\gamma_0)-[\mathfrak{N}(f')](\gamma_0)|\le ct\to 0$. The same convergences remain valid after replacing $\mathfrak{N}$,$\mathfrak{N}'$ by $\mathfrak{D}$,$\mathfrak{D}'$. In particular, for small $t$, the denominator in (\ref{non-small denominator}) is is greater that $\|\mathfrak{D}f\|_{C(\Gamma;\mathbb{R})}^2/2$. From these facts, the convergence (\ref{coeff convergence}) follows.
\end{proof}

\paragraph{Auxiliary lemma.} Let $E$ and $F$ be normed spaces over $\mathbb{R}$ and $\mathcal{G}: \ E\to F$ be a continuous (possibly non-linear) map. We say that $\mathcal{G}$ is $(m,\alpha)-${\it admissible} if 
\begin{enumerate}[(a)]
\item $\mathcal{G}$ is a positive homogeneous map of degree $\alpha$ i.e. $\mathcal{G}(cf)=c^{\alpha}\mathcal{G}(f)$ for all $f\in E$, $c>0$;
\item the null set $\mathcal{G}^{-1}(\{0\})$ of $\mathcal{G}$ is a subspace of codimension $m$ in $\mathcal{E}$.
\end{enumerate}
The set $\mathfrak{Q}_{m,\alpha}(E;F)$ of all $(m,\alpha)-$admissible maps from $E$ to $F$ is endowed with the metrics $\epsilon$,
\begin{equation}
\label{unterlinear metrics}
\epsilon(\mathcal{G}',\mathcal{G})=\sup_{f}\|\mathcal{G}'(f)-\mathcal{G}(f)\|_F,
\end{equation}
where the supremum is taken over all $f\in E$ such that $\|f\|_{E}=1$. We denote the closed ball of radius $t$ in $\mathfrak{Q}_{m,\alpha}(E;F)$ with center at $\mathcal{G}$ by $\mathfrak{B}_{m,\alpha}(\mathcal{G},t)$.

An example of $(m,\alpha)-$admissible map is $\mathcal{G}: \ E\to F$, where 
\begin{equation}
\label{spaces E F}
\begin{split}
E&=\mathcal{C}^{\infty}(\Gamma;\mathbb{R}), \qquad \|\cdot\|_E:=\|\cdot\|_{C^{l+4}(\Gamma;\mathbb{R})}, \\
F&=\mathcal{C}^{\infty}(\Gamma;\mathbb{R}), \qquad \|\cdot\|_F:=\|\cdot\|_{C^{l}(\Gamma;\mathbb{R})}, \\
m&=-\chi(M), \quad \alpha=3.
\end{split}
\end{equation}
with arbitrary $l=0,1,\dots$, and $\mathcal{G}$ is associated with the DN operator $\Lambda$ of the surface $(M,g)$ via formula (\ref{non-linear map}). Indeed, in view of Lemma \ref{preliminary estimates lemma}, $\mathcal{G}: \ E\to F$ is continuous. The property (a) with $\alpha=3$ follows from Proposition \ref{trace-DN-map connection 1}, {\it a}) while (b) is provided by Proposition \ref{trace-DN-map connection 1}, {\it c}). Therefore, we have $\mathcal{G}\in\mathfrak{Q}_{m,\alpha}(E;F)$. Also, the inclusion $\mathcal{G}'\in\mathfrak{Q}_{m,\alpha}(E;F)$ is valid for any map $\mathcal{G}'$ associated with the DN operator $\Lambda'$ of the surface $(M',g')\in\mathbb{B}(M,t)$ since, for any such surface, we have $\chi(M')=\chi(M)$. Moreover, the last estimate in (\ref{closeness of NDG}) implies that $\mathcal{G}'\in\mathfrak{B}_{m,\alpha}(\mathcal{G},ct)$ for any $(M',g')\in\mathbb{B}(M,t)$.

Let us return to the general case. The following lemma states the existence of a family of `almost identical' (in some sense) linear maps between the null set of the map $\mathcal{G}\in\mathfrak{Q}_{m,\alpha}(E;F)$ and the null sets of maps from a sufficiently small neighbourhood $\mathfrak{B}_{m,\alpha}(\mathcal{G},t)$ of $\mathcal{G}$.
\begin{lemma}
\label{main lemma}
Let $\alpha\ge 1$ and $m\in\mathbb{N}$. For any $\mathcal{G}\in \mathfrak{Q}_{m,\alpha}(E;F)$, there exists a positive number $t_0=t_0(\mathcal{G})$ and a family $\{\mathfrak{Y}_{\mathcal{G},\mathcal{G}'}\}_{\mathcal{G}'\in \mathfrak{B}_{m,\alpha}(\mathcal{G},t_0)}$ of linear maps $\mathfrak{Y}_{\mathcal{G},\mathcal{G}'}: \ \mathcal{G}^{-1}(\{0\})\to \mathcal{G}^{'-1}(\{0\})$ satisfying
$$\sup_{\mathcal{G}'\in \mathfrak{B}_{m,\alpha}(\mathcal{G},t)}\|\mathfrak{Y}_{\mathcal{G},\mathcal{G}'}f-f\|_E\underset{t\to 0}{\longrightarrow} 0, \qquad \forall f\in \mathcal{G}^{-1}(\{0\}).$$
\end{lemma}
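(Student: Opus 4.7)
The plan is to fix once and for all a finite-dimensional complement $W$ of $V:=\mathcal{G}^{-1}(\{0\})$ in $E$ (so $E = V \oplus W$ with $\dim W = m$) and to define $\mathfrak{Y}_{\mathcal{G},\mathcal{G}'}$ as the restriction to $V$ of the projection $P_{V'}\colon E \to V':=\mathcal{G}'^{-1}(\{0\})$ along $W$. The map is linear by construction, so the work splits into (i) showing that this projection exists, i.e.\ $E = V' \oplus W$, for every $\mathcal{G}'$ in a sufficiently small ball $\mathfrak{B}_{m,\alpha}(\mathcal{G},t_0)$, and (ii) proving that for each fixed $f \in V$ the quantity $\|P_{V'}(f)-f\|_E = \|P_W(f)\|_E$ tends to $0$ uniformly in $\mathcal{G}' \in \mathfrak{B}_{m,\alpha}(\mathcal{G},t)$ as $t \to 0$, where $P_W := I - P_{V'}$ is the complementary projection onto $W$ along $V'$.

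For (i), admissibility gives $V \cap W = \{0\}$, so $\mathcal{G}$ does not vanish on $W \setminus \{0\}$; by positive homogeneity, continuity of $\mathcal{G}$, and compactness of the unit sphere $S_W$ in the finite-dimensional $W$, I get $c := \inf_{w \in S_W}\|\mathcal{G}(w)\|_F > 0$. The metric (\ref{unterlinear metrics}) yields $\|\mathcal{G}'(w)\|_F \ge c-t$ for every $w \in S_W$ and every $\mathcal{G}' \in \mathfrak{B}_{m,\alpha}(\mathcal{G},t)$, so $\mathcal{G}'$ does not vanish on $S_W$ as soon as $t < c$, and homogeneity of $\mathcal{G}'$ extends this to $W \setminus \{0\}$. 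Since $\dim W = m = {\rm codim}\,V'$, the composition $W \hookrightarrow E \to E/V'$ is an isomorphism and $E = V' \oplus W$; I set $t_0 := c/2$.

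For (ii), I would argue by contradiction. Suppose there exist $\epsilon > 0$ and a sequence $\mathcal{G}_k \in \mathfrak{B}_{m,\alpha}(\mathcal{G},1/k)$ with $w_k := P_{W,k}(f) \in W$ satisfying $\|w_k\|_E \ge \epsilon$; by construction $\mathcal{G}_k(f - w_k) = 0$. The key tool is the rescaling estimate $\|\mathcal{G}'(g) - \mathcal{G}(g)\|_F \le \|g\|_E^\alpha\,\epsilon(\mathcal{G}',\mathcal{G})$ for every nonzero $g \in E$, which is immediate from (\ref{unterlinear metrics}) and positive homogeneity of degree $\alpha$. If $\{w_k\}$ is bounded in $W$, compactness of balls in the finite-dimensional $W$ gives a subsequence $w_k \to w_* \in W$ with $\|w_*\|_E \ge \epsilon$; passing to the limit in $\mathcal{G}_k(f-w_k)=0$ via continuity of $\mathcal{G}$ together with the rescaling estimate yields $\mathcal{G}(f-w_*)=0$, whence $w_* = f - (f-w_*) \in V \cap W = \{0\}$, a contradiction. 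If instead $\|w_k\|_E \to \infty$, set $\tilde w_k := w_k/\|w_k\|_E \in S_W$; positive homogeneity of $\mathcal{G}_k$ gives $\mathcal{G}_k(f/\|w_k\|_E - \tilde w_k) = 0$, and passage to a convergent subsequence $\tilde w_k \to \tilde w_* \in S_W$ yields $\mathcal{G}(-\tilde w_*) = 0$, so $\tilde w_* \in V \cap W = \{0\}$, contradicting $\|\tilde w_*\|_E = 1$.

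The main obstacle is the nonlinearity of $\mathcal{G}$, which rules out any implicit-function-theorem or linearisation argument; the content of the proof is therefore the two-case compactness contradiction in the finite-dimensional $W$. The structural features that make it succeed are exactly those built into admissibility: finite-dimensionality of $W$ provides compact spheres and convergent subsequences, while positive homogeneity of $\mathcal{G}$ (and of each $\mathcal{G}_k$) converts the unit-sphere bound coming from $\epsilon(\mathcal{G}_k,\mathcal{G}) \to 0$ into global estimates along rays and propagates transversality from $S_W$ to all of $W \setminus \{0\}$. With (i) and (ii) in hand, the family $\{\mathfrak{Y}_{\mathcal{G},\mathcal{G}'}\}_{\mathcal{G}' \in \mathfrak{B}_{m,\alpha}(\mathcal{G},t_0)}$ defined above satisfies the conclusion of the lemma.
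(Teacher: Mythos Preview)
Your proof is correct and follows essentially the same approach as the paper's: both fix a finite-dimensional complement $W$ of $V=\mathcal{G}^{-1}(\{0\})$ (the paper writes it as the span of $h_1,\dots,h_m$), show that $W$ remains transverse to $\mathcal{G}'^{-1}(\{0\})$ for $\mathcal{G}'$ close to $\mathcal{G}$ via the lower bound on $\|\mathcal{G}(\cdot)\|_F$ over the unit sphere in $W$, define $\mathfrak{Y}_{\mathcal{G},\mathcal{G}'}$ as the projection onto $\mathcal{G}'^{-1}(\{0\})$ along $W$, and establish the convergence by a compactness contradiction with the same bounded/unbounded dichotomy. The only cosmetic difference is that the paper first reduces to $\alpha=1$ via $f\mapsto\|f\|_E^{1-\alpha}\mathcal{G}(f)$, whereas you handle general $\alpha$ directly through the rescaling estimate $\|\mathcal{G}'(g)-\mathcal{G}(g)\|_F\le\|g\|_E^{\alpha}\,\epsilon(\mathcal{G}',\mathcal{G})$.
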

\begin{proof}
Note that the map $\tilde{\mathcal{G}}: \ f\mapsto \|f\|_E^{1-\alpha}\mathcal{G}(f)$ satisfies the conditions of Lemma \ref{main lemma} while $\tilde{\mathcal{G}}^{-1}(\{0\})=\mathcal{G}^{-1}(\{0\})$. Thus, it is sufficient to consider the case $\alpha=1$.

Let $\mathcal{G}\in \mathfrak{Q}_{m,1}(E;F)$. In view of (b), one can chose $h_1,\dots,h_m\in E$ which are linear independent modulo $\mathcal{G}^{-1}(\{0\})$. For $f\in E$, introduce the continuous function $\mathcal{G}_\cdot: \ E\times\mathbb{R}^m\mapsto [0,+\infty)$ by 
\begin{equation*}
\mathcal{G}_f(\vec{d}):=\|\mathcal{G}(f-\sum_{k=1}^{m}d_k h_k)\|_F,
\end{equation*}
where $\vec{d}=(d_1,\dots,d_m)^T$. For any $f\in \mathcal{G}^{-1}(\{0\})$, the function $\mathcal{G}_f$ is strictly positive $\mathcal{G}_f\ge c_{\mathcal{G},f}>0$ on the unit sphere $S^{m-1}$. 

Note that the norm $|\vec{d}|_E=\|\sum_{k=1}^{m}d_k h_k\|_E$ is equivalent to the standard norm $|\vec{d}|$ in $\mathbb{R}^n$. Let $\vec{d}\in S^{n-1}$, then $\mathcal{G}_0\ge c_{\mathcal{G},0}>0$ on $S^{n-1}$. Since the function $f\mapsto\mathcal{G}_f(\vec{d})$ is continuous, we have 
\begin{equation}
\label{main lemma ineq 1}
\mathcal{G}_f(\vec{d})\ge c_{\mathcal{G},0}/2 \qquad (f\in E, \|f\|_E\le q_{\mathcal{G}}(\vec{d})),
\end{equation}
where $q_{\mathcal{G}}(\vec{d})>0$ is sufficiently small. Let $U(\vec{d})$ be a neighbourhood of $\vec{d}$ in $S^{m-1}$ which consists of all $\vec{e}$ such that $|\vec{d}-\vec{e}|_E\le q_{\mathcal{G}}(\vec{d})/2$. By substitution $f=\sum_{k=1}^{m}(d_k-e_k)h_k+\tilde{f}$ into (\ref{main lemma ineq 1}), we show that $\mathcal{G}_{\tilde{f}}\ge c_{\mathcal{G},0}/2$ on $U(\vec{d})$ for any $\tilde{f}\in E$ such that $\|\tilde{f}\|_E\le q_{\mathcal{G}}(\vec{d})/2$. Since the sphere $S^{m-1}$ is a compact, one can choose the finite sub-cover from the neighbourhoods $U(\vec{d})$ ($\vec{d}\in S^{m-1}$) and obtain the inequality 
\begin{equation}
\label{main lemma ineq 2}
\mathcal{G}_f\ge c_{\mathcal{G},0}/2 \text{ on } S^{m-1} \qquad (f\in E, \|f\|_E\le q_{\mathcal{G}})
\end{equation}
with sufficiently small $q_{\mathcal{G}}$. In particular, we obtain
\begin{equation}
\label{main lemma ineq 3}
\mathcal{G}_f(\vec{d})=|\vec{d}|\mathcal{G}_{|\vec d|^{-1}f}\Big(\frac{\vec{d}}{|\vec{d}|}\Big)\ge c_{\mathcal{G},0}|\vec{d}|/2 \qquad \Big(f\in E, \vec{d}\in\mathbb{R}^m |\vec{d}|\ge\frac{\|f\|_E}{q_{\mathcal{G}}}\Big).
\end{equation}

Due to (a) and (\ref{unterlinear metrics}), we have
$$|\mathcal{G}'_0(\vec{d})|\ge |\mathcal{G}_0(\vec{d})|-|\vec{d}|_E\Big|\mathcal{G}'_0\Big(\frac{\vec{d}}{|\vec{d}|_E}\Big)-\mathcal{G}_0\Big(\frac{\vec{d}}{|\vec{d}|_E}\Big)\Big|\ge c_{\mathcal{G},0}-t|\vec{d}|_E\ge c_{\mathcal{G},0}/2$$
for sufficiently small $t\in [0,t_0]$ and any $\mathcal{G}'\in B_{m,1}(\mathcal{G},t)$, $\vec{d}\in S^{n-1}$. Thus, for any $\mathcal{G}'\in \mathfrak{B}_{m,1}(\mathcal{G},t)$ the elements $h_1,\dots,h_m\in E$ are linear independent modulo $\mathcal{G}^{'-1}(\{0\})$. So, for any $f\in \mathcal{G}^{-1}(\{0\})$, $t\in[0,t_0]$ and $\mathcal{G}'\in \mathfrak{B}_{m,1}(\mathcal{G},t)$, there is the unique decomposition 
\begin{equation}
\label{main lemma decomposition}
f=\sum_{k=1}^m d_{k,\mathcal{G}'}(f)h_k+\tilde{f}_{\mathcal{G}'},
\end{equation}
where $d_{k,\mathcal{G}'}\in\mathbb{R}$ and $\tilde{f}_{\mathcal{G}'}\in \mathcal{G}^{'-1}(\{0\})$. Denote $\vec{d}_{\mathcal{G}'}(f)=(d_{1,\mathcal{G}'}(f),\dots,d_{m,\mathcal{G}'}(f))^T$. We introduce the family $\{\mathfrak{Y}_{\mathcal{G},\mathcal{G}'}\}_{\mathcal{G}'\in \mathfrak{B}_{m,1}(\mathcal{G},t_0)}$ by the rule
\begin{equation}
\label{main lemma rule}
\mathfrak{Y}_{\mathcal{G},\mathcal{G}'}f:=\tilde{f}_{\mathcal{G}'} \qquad (f\in \mathcal{G}^{-1}(\{0\})),
\end{equation}
where $\tilde{f}_{\mathcal{G}'}$ is the remainder in (\ref{main lemma decomposition}). Note that the coefficients $d_{k,\mathcal{G}'}(f)$ and the reminder $\tilde{f}_{\mathcal{G}'}$ depend linearly on $f$ due to (b). Thus, each map $\mathfrak{Y}_{\mathcal{G},\mathcal{G}'}$ is linear.

Assume that $f\in \mathcal{G}^{-1}(\{0\})$ and $\mathcal{G}'\in \mathfrak{B}_{m,1}(\mathcal{G},t)$. Due to a) and (\ref{unterlinear metrics}), the inequality
\begin{align*}
\|\mathcal{G}'(\tilde{f}_{\mathcal{G}'})-\mathcal{G}(\tilde{f}_{\mathcal{G}'})\|_F=\|\tilde{f}_{\mathcal{G}'}\|_E \Big\|\mathcal{G}'\Big(\frac{\tilde{f}_{\mathcal{G}'}}{\|\tilde{f}_{\mathcal{G}'}\|_E}\Big)-\mathcal{G}\Big(\frac{\tilde{f}_{\mathcal{G}'}}{\|\tilde{f}_{\mathcal{G}'}\|_E}\Big)\Big\| \le \\ \le t\|\tilde{f}_{\mathcal{G}'}\|_E \le t(\|f\|_E+|\vec{d}_{\mathcal{G}'}(f)|_E)\le ct(\|f\|_E+|\vec{d}_{\mathcal{G}'}(f)|)
\end{align*}
is valid for the remainder $\tilde{f}_{\mathcal{G}'}$ in (\ref{main lemma decomposition}). Since $0=\mathcal{G}'(\tilde{f}_{\mathcal{G}'})=\mathcal{G}(\tilde{f}_{\mathcal{G}'})+\mathcal{G}'(\tilde{f}_{\mathcal{G}'})-\mathcal{G}(\tilde{f}_{\mathcal{G}'})$, we obtain
\begin{equation}
\label{main lemma eq 1}
\mathcal{G}_f(\vec{d}_{\mathcal{G}'}(f))=\|\mathcal{G}(\tilde{f}_{\mathcal{G}'})\|_F\le ct(\|f\|_E+|\vec{d}_{\mathcal{G}'}(f)|),
\end{equation}
where $c$ is independent of $\mathcal{G}'$, $f$, and $t$. 

Let us show that the set 
$$\{\|f\|_{E}^{-1}|\vec{d}_{\mathcal{G}'}(f)| \ | \ \mathcal{G}'\in \mathfrak{B}_{m,1}(\mathcal{G},t_0), f\in \mathcal{G}^{-1}(\{0\})\backslash\{0\}\}$$
is bounded for sufficiently small $t_0>0$. Suppose the contrary; then there exist sequences $t_k\to 0$, $\{f_k\}\subset \mathcal{G}^{-1}(\{0\})\backslash\{0\}$, and $\{\mathcal{G}'_k\}\subset \mathfrak{B}_{m,1}(\mathcal{G},\varepsilon_k)$ such that $\alpha_k:=\|f_k\|_{E}^{-1}||\vec{d}_{\mathcal{G}'_k}(f_k)|\to +\infty$. Denote $\vec{e}_{k}=(\alpha_k\|f_k\|_{E})^{-1}\vec{d}_{\mathcal{G}'}(f_k)$ and, $p_k:=(\alpha_k\|f_k\|_{E})^{-1}f_k$. Due to a), we have $\mathcal{G}_{f_k}(\vec{d}_{\mathcal{G}'_k})=\alpha_k\|f_k\|_{E} \mathcal{G}_{p_k}(\vec{e}_{k})$. Hence, estimate (\ref{main lemma eq 1}) yields
$$\mathcal{G}_{p_k}(\vec{e}_{k})\le ct_k(\alpha_k^{-1}+1)\to 0.$$
On the contrary, we have $\|p_k\|_E=\alpha_k^{-1}\to 0$ and $|\vec{e}_{k}|=1$. Hence, inequality (\ref{main lemma ineq 2}) yields  $\mathcal{G}_{p_k}(\vec{e}_{k})\ge c_{\mathcal{G},0}/2$ for sufficiently large $k$, a contradiction. 

So, one can choose $t_0>0$ and $C>0$ in such a way that $|\vec{d}_{\mathcal{G}'}(f)|\le C\|f\|_{E}$ for any $\mathcal{G}'\in \mathfrak{B}_{m,1}(\mathcal{G},t_0)$ and $f\in \mathcal{G}^{-1}(\{0\})$. Now, (\ref{main lemma eq 1}) takes the form 
\begin{equation}
\label{main lemma eq 3}
\mathcal{G}_f(\vec{d}_{\mathcal{G}'}(f))\le ct\|f\|_{E},
\end{equation}
where the constant $c$ does not depend on $f\in \mathcal{G}^{-1}(\{0\})$, $\mathcal{G}'\in \mathfrak{B}_{m,1}(\mathcal{G},t)$, and $t\in [0,t_0]$. Introduce the function 
\begin{equation}
\label{epsilon}
t\mapsto\varepsilon_{\mathcal{G},f}(t):=\sup\{|\vec{d}|_E \ | \ \vec{d}\in\mathcal{G}_f^{-1}([0,ct\|f\|_{E}])\}.
\end{equation}
Then (\ref{main lemma eq 3}) means that 
$$\|\tilde{f}_{\mathcal{G}'}-f\|_E\le c|\vec{d}_{\mathcal{G}'}(f)|_E\le c\varepsilon_{\mathcal{G},f}(t)$$ 
for any $f\in \mathcal{G}^{-1}(\{0\})$, $\mathcal{G}'\in \mathfrak{B}_{m,1}(\mathcal{G},t)$, and $t\in [0,t_0]$. To complete the proof, it remains to show that $\varepsilon_{\mathcal{G},f}(t)\to 0$ as $t\to 0$ for any $f\in \mathcal{G}^{-1}(\{0\})$. Suppose the contrary, then there is a sequence $\{\vec{d}_k\}\subset\mathbb{R}^m$ such that $|\vec{d}_k|\ge c>0$ and $\mathcal{G}_f(\vec{d}_k)\to 0$. In view of (\ref{main lemma ineq 3}), the sequence $\{\vec{d}_k\}$ is bounded and, hence, one can choose a subsequence which converges to some point $\vec{d}_\infty$ in $\mathbb{R}^m$. We can assume that this subsequence coincides with $\{\vec{d}_k\}$. Since $\mathcal{G}_f$ is continuous on $\mathbb{R}^n$, we have $\mathcal{G}_f(\vec{d}_\infty)=\lim\limits_{k\to\infty}\mathcal{G}_f(\vec{d}_k)=0$. However, since $f\in \mathcal{G}^{-1}(\{0\})$, the function $\mathcal{G}_f$ is strictly positive on $\mathbb{R}^{n}\backslash\{0\}$, a contradiction.
\end{proof}

\paragraph{Constructing the map $\iota_{M,M'}$.} Let $\mathcal{G}$ be a map (\ref{non-linear map}) associated with the surface $(M,g)$, $t_0>0$ be a sufficiently small number, and $l=4,5,\dots$.  Accepting notation (\ref{spaces E F}) and applying Lemma \ref{main lemma}, we construct the family $\{\mathfrak{Y}_{\mathcal{G},\mathcal{G}'} \ | \ \mathcal{G}'\in \Xi_{\mathcal{G}}(M,t_0)\}$ of linear maps $\mathfrak{Y}_{\mathcal{G},\mathcal{G}'}: \ \mathcal{G}^{-1}(\{0\})\to \mathcal{G}^{'-1}(\{0\})$ such that the convergence
\begin{equation}
\label{application of main lemma - est}
\sup_{\mathcal{G}'\in  \mathbb{B}_{\mathcal{G}}(M,t)}\|\mathfrak{Y}_{\mathcal{G},\mathcal{G}'}f-f\|_{C^{l}(\Gamma;\mathbb{R})}\underset{t\to 0}{\longrightarrow} 0
\end{equation}
is valid for any $f\in \mathcal{G}^{-1}(\{0\})$. The maps $\mathfrak{Y}_{\mathcal{G},\mathcal{G}'}$ are defined by formulas (\ref{main lemma decomposition}), (\ref{main lemma rule}), and, in particular, they do not depend on $l$. Note that (\ref{C infty convergence}) follows from formulas (\ref{application of main lemma - est}) with $l=4,5,\dots$.

Now, introduce the family $\{\tilde{\iota}_{M,M'}\}_{(M',g')\in \Xi(M,t_0)}$ of linear maps $\tilde{\iota}_{M,M'}: \ {\rm Tr}\mathcal{H}_+({\rm M})\to {\rm Tr}\mathcal{H}_+({\rm M}')$ by formulas (\ref{iota trace map 1}), (\ref{iota trace map 2}) where $\Lambda'$ is a DN map of $(M',g')$ and the maps $\mathfrak{N}'$, $\mathfrak{D}'$, $\mathcal{G}'$ are related to $\Lambda'$ via (\ref{numerator denominator}) and (\ref{non-linear map}). Also, introduce the family $\{\iota_{M,M'}\}_{(M',g')\in \Xi(M,t_0)}$ of maps $\iota_{M,M'}:={\rm Tr}^{'-1}\tilde{\iota}_{M,M'}{\rm Tr}$. In view of (\ref{application of main lemma - est}), the first estimate in (\ref{closeness of NDG}), and (\ref{coeff convergence}), the convergence 
\begin{equation}
\label{application of main lemma - est 1}
\sup_{(M',g')\in \mathbb{B}(M,t)}\|\tilde{\iota}_{M,M'}\eta-\eta\|_{C^{l}(\Gamma;\mathbb{R})}\underset{t\to 0}{\longrightarrow} 0
\end{equation}
is valid for any $\eta\in {\rm Tr}\mathcal{H}_+({\rm M})$ and $l=0,1,\dots$. In particular, we obtain (\ref{C infty convergence 1}) and (\ref{iota map}).

\paragraph{Estimates of $\tilde{\iota}_{M,M'}\eta-\eta$.} Let us estimate the left-hand sides of (\ref{application of main lemma - est}) and (\ref{application of main lemma - est 1}). To this end, it is sufficient to estimate the function (\ref{epsilon}) from the proof of Lemma \ref{main lemma}. First, note that $\mathfrak{N}(f)$ from (\ref{numerator denominator}) can be represented as $\mathfrak{N}(f)$ as $\mathfrak{N}(f)=\frac{1}{2}\mathfrak{Q}(f,f)$, where $\mathfrak{Q}$ is a bilinear map given by
$$\mathfrak{Q}(f,h):=\Lambda[fh-(J\Lambda f)(J\Lambda h)]-f\Lambda h-h\Lambda f-(J\Lambda f)\partial_\gamma h-(J\Lambda h)\partial_\gamma f.$$
Then $\mathfrak{N}(f+h)=\mathfrak{N}(f)+\mathfrak{Q}(f,h)+\mathfrak{N}(h)$. Hence, in view of (\ref{non-linear map}), (\ref{numerator denominator}), we have $\mathcal{G}(f+h)=\sum_{k=0}^{3}\mathcal{G}^{(k)}_{f}(h)$, where $\mathcal{G}^{(0)}_{f}(h):=\mathcal{G}(f)$, $\mathcal{G}^{(3)}_{f}(h):=\mathcal{G}(h)$, and 
\begin{align*}
\mathcal{G}^{(1)}_{f}(h):=\partial_{\gamma}\mathfrak{N}(f)\mathfrak{D}h-\mathfrak{N}(f)\partial_{\gamma}\mathfrak{D}h+(\mathfrak{D}f)\partial_{\gamma}\mathfrak{Q}(f,h)-(\partial_{\gamma}\mathfrak{D}f)\mathfrak{Q}(f,h),\\
\mathcal{G}^{(2)}_{f}(h):= (\mathfrak{D}f)\partial_{\gamma}\mathfrak{N}(h)-(\partial_{\gamma}\mathfrak{D}f)\mathfrak{N}(h)+\partial_{\gamma}\mathfrak{Q}(f,h)\cdot\mathfrak{D}h-\mathfrak{Q}(f,h)\partial_{\gamma}\mathfrak{D}h.
\end{align*}
In view of the formulas above, each $\mathcal{G}^{(k)}_{f}$ is a positive homogeneous map of degree $k$ i.e. $\mathcal{G}^{(k)}_{f}(sh)=s^k \mathcal{G}^{(k)}_{f}(h)$ for any $s>0$ and $f\in E$. Then, in the notation of the proof of Lemma \ref{main lemma}, we have 
\begin{equation}
\label{function G f 1}
\mathcal{G}_f(\vec{d}):=\|\mathcal{G}(f-h)\|_F=\|\sum_{k=1}^{3}\mathcal{G}^{(k)}_{f}(-h)\|_F=\|\sum_{k=1}^{3}\mathcal{G}^{(k)}_{f}(\hat{h})|\vec{d}|^k\|_F,
\end{equation}
where $f\in\mathcal{G}^{-1}(\{0\})$, $h:=\sum_{k=1}^{m}d_k h_k$, and $\hat{h}:=-|\vec{d}|^{-1}h$. Recall that $\mathcal{G}_f\ge c_{\mathcal{G},f}>0$ on $S^{m-1}$. Let us show that there exists a positive number $\tilde{c}_{\mathcal{G},f}$ such that
\begin{equation}
\label{epsilon est 1}
\mathcal{G}_f(\vec{d})\ge \tilde{c}_{\mathcal{G},f}|\vec{d}|^3
\end{equation}
for sufficiently small $\vec{d}$. Suppose the contrary, then there exist the bounded sequences $\vec{d}^{(j)}$ and $\vec{e}^{(j)}:=|\vec{d}^{(j)}|^{-1}\vec{d}^{(j)}$ in $\mathbb{R}^m$, and the sequences $h^{(j)}:=\sum_{k=1}^{m}d^{(j)}_k h_k$ and $\hat{h}^{(j)}:=-\sum_{k=1}^{m}e^{(j)}_k h_k$ in $E$ such that $|\vec{d}^{(j)}|^{-3}\mathcal{G}_f(\vec{d}^{(j)})\to 0$ as $j\to\infty$. Since the sphere $S^{m-1}$ is compact, one can assume that $\vec{d}^{(j)}\to \vec{d}^{(\infty)}$ in $S^{m-1}$ while $\hat{h}^{(j)}\to \hat{h}^{(\infty)}$ in $E$ while $\vec{d}^{(j)}\to \vec{d}^{(\infty)}$ in $\mathbb{R}^m$. From (\ref{function G f 1}), it follows that
\begin{equation}
\label{function G f 2}
\|\sum_{k=1}^{3}\mathcal{G}^{(k)}_{f}(\hat{h}^{(j)})|\vec{d}^{(j)}|^{k-3}\|_F\to 0.
\end{equation}
Also, $\mathcal{G}^{(k)}_{f}(\hat{h}^{(j)})\to \mathcal{G}^{(k)}_{f}(\hat{h}^{(\infty)})$ in $F$ due to the continuity of the maps $\mathcal{G}^{(k)}_{f}: \ E\to F$. If $\vec{d}^{(\infty)}\ne 0$, then passage to the limit in (\ref{function G f 2}) yields $\mathcal{G}_f(\vec{d}^{(\infty)})=0$. If $\vec{d}^{(\infty)}=0$, then (\ref{function G f 2}) implies $\mathcal{G}^{(k)}_{f}(\hat{h}^{(\infty)})=0$ and $\mathcal{G}_f(\vec{e}^{(\infty)})=0$. However, $\mathcal{G}_f\ne 0$ on $\mathbb{R}^m\backslash\{0\}$ a contradiction. Thus, formula (\ref{epsilon est 1}) is valid. In particular, the function (\ref{epsilon}) in the proof of Lemma \ref{main lemma} obeys the inequality
\begin{equation}
\label{epsilon est}
\varepsilon_{\mathcal{G},f}(t)\le c(\|f\|_E t)^{\frac{1}{3}}.
\end{equation}
In view of (\ref{epsilon est}), in addition to convergence (\ref{application of main lemma - est}), we have the estimate
\begin{equation}
\label{application of main lemma - est 1}
\sup_{\mathcal{G}'\in  \mathbb{B}_{\mathcal{G}}(M,t)}\|\mathfrak{Y}_{\mathcal{G},\mathcal{G}'}f-f\|_{C^{l}(\Gamma;\mathbb{R})}\le c_{f,l}t^{\frac{1}{3}}.
\end{equation}
Also, since $\mathfrak{N}(f')-\mathfrak{N}(f)=\mathfrak{Q}(f'-f,f)+\mathfrak{Q}(f,f'-f)$, by repeating of the proof of Lemma \ref{preliminary estimates lemma}, c), we obtain
\begin{equation}
\label{coeff convergence 1}
\sup |c'_{f'}-c_f|\le ct
\end{equation}
instead of (\ref{coeff convergence 1}). Now, from (\ref{application of main lemma - est 1}), (\ref{closeness of NDG}), and (\ref{coeff convergence 1}), it follows that
\begin{equation}
\label{application of main lemma - est 2}
\sup_{(M',g')\in \mathbb{B}(M,t)}\|\tilde{\iota}_{M,M'}\eta-\eta\|_{C^{l}(\Gamma;\mathbb{R})}\le c_{\eta,l}t^{\frac{1}{3}}.
\end{equation}

\section{Closeness of embeddings}
\label{sec embeddings}
Let $(M,g)$ be a non-orientable surface with the boundary $\Gamma$, $({\rm M},{\rm g})$ be its covering space, and $\mathcal{E}=\{w_1,\dots,w_n\}$ be a fixed embedding of $({\rm M},{\rm g})$. For sufficiently small $t\in (0,t_0)$ (where $t_0$ depends on $(M,g)$ and $\mathcal{E}$) and any surface $(M',g')\in\mathbb{B}(M,t)$, we introduce the {\it induced embedding} $\mathcal{E}_{\rm M'}=\iota_{M,M'}\mathcal{E}$ of its covering space $({\rm M}',{\rm g}')$ by the formula
$$\mathcal{E}_{\rm M'}=\{w'_1,\dots,w'_n\}, \qquad w'_k:=\iota_{M,M'}w_k,$$
where the map $\iota_{M,M'}$ is introduced in Section \ref{sec map iota}. In what follows, we denote $\eta_k:=w_k|_\Upsilon$ and $\eta'_k:=w'_k|_\Upsilon=\tilde{\iota}_{M,M'}\eta_k$. In view of (\ref{application of main lemma - est 2}), we have
\begin{equation}
\label{epsilon k}
\sup_{(M',g')\in \mathbb{B}(M,t)}\sum_{k=1}^{n}\|\tilde{\iota}_{M,M'}\eta_k-\eta_k\|_{C^{l}(\Gamma;\mathbb{R})}\le  c_{\mathcal{E},l}t^{\frac{1}{3}}. \qquad (l=0,1,\dots),
\end{equation}
where the constant $c_{\mathcal{E},l}$ depends only on $(M,g)$, $\mathcal{E}$, and $l$. Recall that the {\it Hausdorff distance} $d_{H}(K_1,K_2)$ between two compact subsets $K_1$ and $K_2$ of $\mathbb{C}^n$ is a minimal number $r\ge 0$ such that $r-$neighbourhood of $K_1$ contains $K_2$ and $r-$neighbourhood of $K_2$ contains $K_1$. Note that (\ref{epsilon k}) implies the closeness of boundaries of $\mathcal{E}({\rm M})$ and $\mathcal{E}_{\rm M'}({\rm M}')$, 
\begin{equation}
\label{emb b closeness}
\sup_{(M',g')\in \mathbb{B}(M,t)}d_{H}(\mathcal{E}_{\rm M'}(\Upsilon),\mathcal{E}(\Upsilon))\le c_{\mathcal{E},0}t^{\frac{1}{3}}.
\end{equation}
The purpose of this section is to prove that, for sufficiently small $t$, the map $\mathcal{E}_{\rm M'}: \ {\rm M}'\to\mathbb{C}^n$ is actually embedding and its image $\mathcal{E}_{\rm M'}({\rm M}')$ is close to $\mathcal{E}({\rm M})$ in Hausdorff distance. Namely, we prove the estimate
\begin{equation}
\label{emb closeness}
\sup_{(M',g')\in \mathbb{B}(M,t)}d_{H}(\mathcal{E}_{\rm M'}({\rm M}'),\mathcal{E}({\rm M}))\le c_{\mathcal{E},3}t^{\frac{1}{3}}.
\end{equation}
Along with (\ref{emb closeness}), we prove some more informative estimates of the closeness of the surfaces $\mathcal{E}_{\rm M'}({\rm M}')$ and $\mathcal{E}({\rm M})$, which will also be needed for the proof of Theorem \ref{main theorem}.

The proof of (\ref{emb closeness}) follows the scheme of the paper \cite{BKstab}. We construct the finite cover of the image $\mathcal{E}({\rm M})\subset\mathbb{C}^n$ by projective cylinders of the following two types. The closure of the cylinder $\Pi_{\hat{\xi},\mathcal{D}}$ of the first type in $\mathbb{C}^n$ does not intersect the boundary $\mathcal{E}(\Upsilon)$ of $\mathcal{E}({\rm M})$. Due to (\ref{emb b closeness}), it also does not intersect the boundary $\mathcal{E}_{\rm M'}(\Upsilon)$ of $\mathcal{E}_{\rm M'}({\rm M}')$ for sufficiently small $t$. In this case, we prove the closeness of the points $\xi\in \mathcal{E}({\rm M})$ and $\xi'\in\mathcal{E}_{\rm M'}({\rm M}')$ with the joint projection $z=\mathfrak{p}_{\hat{\xi}}\xi=\mathfrak{p}_{\hat{\xi}}\xi'$ on the cylinder base $D$ by applying the generalized argument principle (\ref{GAP application}) and formula (\ref{epsilon k}). The cylinder $\Pi_{\hat{\xi},\mathcal{D}}$ of the second type does intersect the boundary $\mathcal{E}(\Upsilon)$ while its base $D$ is sufficiently small. In this case, the points $\xi\in \mathcal{E}({\rm M})$ and $\xi'\in\mathcal{E}_{\rm M'}({\rm M}')$ with the joint projection $\mathfrak{p}_{\hat{\xi}}\xi=\mathfrak{p}_{\hat{\xi}}\xi'$ may not exist simultaneously and so we need a less explicit correspondence between $\xi$ and $\xi'$. Also, due to the presence of a small denominator in (\ref{GAP application}), the application of the generalized argument principle for the proof of closeness of $\xi$ and $\xi'$ is no longer straightforward and requires standard but more complicated estimates of the Cauchy-type integrals. As a result, we estimate the Hausdorff distance between parts of $\mathcal{E}({\rm M})$ and $\mathcal{E}_{\rm M'}({\rm M}')$ contained in each cylinder $\Pi_{\hat{\xi},\mathcal{D}}$ of the cover. Also, we prove that these cylinders are also projective for $\mathcal{E}_{\rm M'}({\rm M}')$ and provide the cover of $\mathcal{E}_{\rm M'}({\rm M}')$ for sufficiently small $t$. By this, we prove (\ref{emb closeness}).

For the convenience of the reader, we give a detailed proof of (\ref{emb closeness}) in the rest of the section. Throughout the section, we assume that all constants do not depend on $t$ and $(M',g')\in\mathbb{B}(M,t)$.

\subsection{Estimates in cylinders of the first type.} 
\label{1st type est}
Let $\Pi_{\hat{\xi},\mathcal{D}}:=\mathfrak{p}_{\hat{\xi}}^{-1}(\mathcal{D})$ be a projective cylinder for $\mathcal{E}({\rm M})$ with the base $\mathcal{D}$ and the projection $\mathfrak{p}_{\hat{\xi}}$ given by (\ref{projection}). Suppose that the closure of $\Pi_{\hat{\xi},\mathcal{D}}$ in $\mathbb{C}^n$ does not intersect $\mathcal{E}(\Upsilon)$. Then the closure of its base $\mathcal{D}$ in $\mathbb{C}$ is contained in $w_{\hat{\xi}}({\rm M})\backslash\eta_{\hat{\xi}}(\Upsilon)$, where $w_{\hat{\xi}}$, $\eta_{\hat{\xi}}$ are defined in (\ref{projection}). In particular, we have  $\sup\limits_{z\in \mathcal{D}}|\eta_{\hat{\xi}}-z|\ge c_0>0$ on $\Upsilon$. Let $z\in\mathcal{D}$. Since the restriction of $\mathfrak{p}_{\hat{\xi}}$ on $\mathcal{E}({\rm M})\cap\Pi_{\hat{\xi},\mathcal{D}}$ is an embedding, the restriction of the function $w_{\hat{\xi}}=\mathfrak{p}_{\hat{\xi}}\circ\mathcal{E}=\sum_{j=1}^n\hat{\xi}_j w_j$ on $w_{\hat{\xi}}^{-1}(\mathcal{D})$ is also an embedding. Then there is a unique and simple zero $x$ of the function $w_{\hat{\xi}}-z$ and, in view of the argument principle (\ref{argument principle}), we have ${\rm wind}(\eta_{\hat{\xi}},z)=1$. The $\xi=\Xi(z):=\mathcal{E}(x)$ is a unique point of $\mathcal{E}({\rm M})$ with the projection $\mathfrak{p}_{\hat{\xi}}\xi=z$. Note that the components $\Xi_k:=\mathfrak{p}_{\hat{\xi}}\Xi=w_{k}\circ w_{\hat{\xi}}^{-1}$ of $\Xi$ are holomorphic on $\mathcal{D}$.

Let $t>0$ be sufficiently small, $(M',g')$ be a surface from $\mathbb{B}(M,t)$, and $\mathcal{E}_{\rm M'}=\{w'_1,\dots,w'_n\}$ be the embedding of its covering space $({\rm M}',{\rm g}')$ induced by $\mathcal{E}$. Denote the unit function on ${\rm M}$ (${\rm M'}$) by $w_0$ ($w'_0$). Also, put $w'_{\hat{\xi}}:=\mathfrak{p}_{\hat{\xi}}\circ\mathcal{E}_{\rm M'}=\sum_{j=1}^n\hat{\xi}_j w'_j$ and $\eta'_{\hat{\xi}}:=w'_{\hat{\xi}}|_\Upsilon$. In view of (\ref{epsilon k}), we have 
\begin{equation}
\label{boundary curve est}
\|\eta'_{\hat{\xi}}-\eta_{\hat{\xi}}\|_{C^{l}(\Upsilon;\mathbb{C})}\le c_{\mathcal{E},l}|\hat{\xi}|t^{\frac{1}{3}} \qquad (l=0,1,\dots)
\end{equation}
(here and in the subsequent, we omit dependence of the differentiability index $l$ in the notation of constants). In particular, we have $|\eta'_{\hat{\xi}}-z|\ge c/2>0$ on $\Upsilon$. Hence, we have
\begin{equation}
\label{int estimate out of b}
\Big|\partial_z^l\Big[\frac{1}{2\pi}\int\limits_\Upsilon \eta'_j\frac{\partial_\gamma \eta'_{\hat{\xi}}}{\eta'_{\hat{\xi}}-z}dl-\frac{1}{2\pi}\int\limits_\Upsilon \eta_j\frac{\partial_\gamma \eta_{\hat{\xi}}}{\eta_{\hat{\xi}}-z}dl\Big]\Big|\le c_{\mathcal{E},1}t^{\frac{1}{3}}
\end{equation}
for $j=0,\dots,n$ and $l=0,1,\dots$. Due to the argument principle (\ref{argument principle}), estimate (\ref{int estimate out of b}) with $j=l=0$ implies ${\rm wind}(\eta'_{\hat{\xi}},z)=1$. This means that there is a unique and simple zero $x'$ of the function $w'_{\hat{\xi}}-z$ on ${\rm M}'$ and $\xi'=\Xi_{\rm M'}(z):=\mathcal{E}_{\rm M'}(x')$ is a unique point of $\mathcal{E}_{\rm M'}({\rm M}')$ with the projection $\mathfrak{p}_{\hat{\xi}}\xi'=z$. Since $z\in\mathcal{D}$ is arbitrary, the cylinder $\Pi_{\hat{\xi},\mathcal{D}}$ is projective for $\mathcal{E}_{\rm M'}({\rm M}')$. Also, due to the generalized argument principle (\ref{GAP application}), estimates (\ref{int estimate out of b}) imply
\begin{equation}
\label{closenes out of b}
\|\Xi_{\rm M'}-\Xi\|_{C^l(\mathcal{D};\mathbb{C}^n)}\le c_{\mathcal{E},1}t^{\frac{1}{3}}, \qquad z\in\mathcal{D}, \ l=0,1,\dots.
\end{equation}
Note that the components $\Xi'_k:=\mathfrak{p}_{\hat{\xi}}\Xi'=w'_{k}\circ w_{\hat{\xi}}^{'-1}$ of $\Xi'$ are holomorphic on $\mathcal{D}$. Denote $\mathfrak{A}_{\rm M'}:=\Xi_{\rm M'}\circ\Xi^{-1}$, then the map $\mathfrak{A}_{\rm M'}: \ \mathcal{E}({\rm M})\cap\Pi_{\hat{\xi},\mathcal{D}}\to \mathcal{E}_{\rm M'}({\rm M}')\cap\Pi_{\hat{\xi},\mathcal{D}}$ is a diffeomorphism. Also, estimate (\ref{closenes out of b}) with $l=0$  implies
\begin{equation}
\label{closenes out of b 1}
|\mathfrak{A}_{\rm M'}\xi-\xi|\le c_{\mathcal{E},1}t^{\frac{1}{3}}, \qquad \xi\in\mathcal{E}({\rm M})\cap\Pi_{\hat{\xi},\mathcal{D}}.
\end{equation}
So, any point $\xi_0\in\mathcal{E}({\rm M})\backslash\mathcal{E}(\Upsilon)$ is contained in some cylinder $\Pi_{\hat{\xi},\mathcal{D}}$ such that,for sufficiently small $t\in(0,t(\xi_0))$ and any $(M',g')\in\mathbb{B}(M,t)$, the following properties are valid:
\begin{enumerate}[(1')]
\item $\Pi_{\hat{\xi},\mathcal{D}}$ is projective for both $\mathcal{E}({\rm M})$ and $\mathcal{E}_{\rm M'}({\rm M}')$ and its closure does not intersect both $\mathcal{E}(\Upsilon)$ and $\mathcal{E}_{\rm M'}(\Upsilon)$.
\item the maps $\Xi:\ \mathcal{D}\to\mathcal{E}({\rm M})\cap\Pi_{\hat{\xi},\mathcal{D}}$ and $\Xi_{\rm M'}:\ \mathcal{D}\to\mathcal{E}_{\rm M'}({\rm M}')\cap\Pi_{\hat{\xi},\mathcal{D}}$, which are inverse to the projection $\mathfrak{p}_{\hat{\xi}}$, satisfy estimate (\ref{closenes out of b}). 
\item the map $\mathfrak{A}_{\rm M'}:=\Xi_{\rm M'}\circ\Xi^{-1}$ is a diffeomorphism obeying estimate (\ref{closenes out of b 1}).
\end{enumerate}
Indeed, since the embedding $\mathcal{E}$ is projective, $\xi_0$ is contained in some projective cylinder $\Pi_{\hat{\xi},\mathcal{D}}$ for $\mathcal{E}({\rm M})$ and the property $\mathcal{E}(\Upsilon)\cap \overline{\Pi_{\hat{\xi},\mathcal{D}}}=\varnothing$ can be achieved by decreasing the diameter of its base $\mathcal{D}$. Then a corollary to the above are the properties (1')-(3').

\subsection{Estimates in cylinders of the second type.} 
Let $s_0$ be an arbitrary point of $\Upsilon$. Since $\mathcal{E}$ is projective, the point $\xi_0:=\mathcal{E}(s_0)$ is contained in some projective cylinder $\Pi_{\hat{\xi},\mathcal{D}}$ for $\mathcal{E}({\rm M})$. Denote $\xi_0:=\mathfrak{p}_{\hat{\xi}}\xi_0=w_{\hat{\xi}}(l_0)=\eta_{\hat{\xi}}(s_0)$ (for definitions of $\mathfrak{p}_{\hat{\xi}}$, $w_{\hat{\xi}}$ and $\eta_{\hat{\xi}}$, see (\ref{projection})). Since $\Pi_{\hat{\xi},\mathcal{D}}$ is projective for $\mathcal{E}({\rm M})$, the restriction of the map $w_{\hat{\xi}}=\mathfrak{p}_{\hat{\xi}}\circ\mathcal{E}$ on $w_{\hat{\xi}}^{-1}(\mathcal{D})$ is an embedding. In particular, we have $\partial_\gamma\eta_{\hat{\xi}}(s_0)\ne 0$. For $s\in\Upsilon$, denote 
\begin{equation}
\label{theta}
\theta(s):=\Re\frac{\eta_{\hat{\xi}}(s)-\eta_{\hat{\xi}}(s_0)}{\partial_\gamma\eta_{\hat{\xi}}(s_0)}.
\end{equation}
The diameter of $\mathcal{D}$ can be chosen sufficiently small so that $\mathcal{D}\backslash\eta_{\hat{\xi}}(\Upsilon)$ has two connected components $\mathcal{D}_\pm$ obeying 
\begin{equation}
\label{connect comp}
\mathfrak{p}_{\hat{\xi}}^{-1}(\mathcal{D}_-)\cap\mathcal{E}({\rm M})=\varnothing, \qquad \mathfrak{p}_{\hat{\xi}}^{-1}(\mathcal{D}_+)\cap\mathcal{E}({\rm M})=\Pi_{\hat{\xi},\mathcal{D}}\cap\mathcal{E}({\rm int}{\rm M}).
\end{equation}
while $\partial_\gamma\theta\ge c_0>0$ on the segment $\Upsilon_{\mathcal{D}}:=\eta_{\hat{\xi}}^{-1}(\eta_{\hat{\xi}}(\Upsilon)\cap\mathcal{D})$ of $\Upsilon$. In particular, the restriction of $\theta$ on $\Upsilon_{\mathcal{D}}$ is an injection. For points $z\in\mathcal{D}$, introduce the coordinates $\Psi(z):=(s(z),\rho(z))$ by
\begin{equation}
\label{loc coordinates near b}
s(z):=\theta^{-1}\Big(\Re\frac{z-\eta_{\hat{\xi}}(s_0)}{\partial_\gamma\eta_{\hat{\xi}}(s_0)}\Big), \qquad \rho(z):=\frac{z-\eta_{\hat{\xi}}(s(z))}{i\partial_\gamma\eta_{\hat{\xi}}(s_0)}.
\end{equation}
or, conversely,
\begin{equation}
\label{loc coordinates near b inv}
z=\eta_{\hat{\xi}}(s)+i\partial_\gamma\eta_{\hat{\xi}}(s_0)\rho.
\end{equation}
Let us explain the choice of coordinates $(l,\rho)$. Denote the tangent line to the curve $\eta_{\hat{\xi}}(\Upsilon)$ in $\mathbb{C}$ at the point $\eta_{\hat{\xi}}(s_0)$ by $L_0$. Then $\eta_{\hat{\xi}}(s(z))$ is a point of the curve $\eta_{\hat{\xi}}(\Upsilon)\cap\mathcal{D}$ which has the same orthogonal projection on $L_0$ as $z$. Also, for $z\in\mathcal{D}_\pm$, the number $\pm|\partial_\gamma\eta_{\hat{\xi}}(s_0)|\rho(z)$ is the distance from $z$ to $\eta_{\hat{\xi}}(l(z))$. Note that $\Psi(\eta_{\hat{\xi}}(l))=(l,0)$.

Let $t>0$ be sufficiently small, $(M',g')$ be an arbitrary surface from $\mathbb{B}(M,t)$, and $\mathcal{E}_{\rm M'}=\{w'_1,\dots,w'_n\}$ be the embedding of its covering space $({\rm M}',{\rm g}')$ induced by $\mathcal{E}$. Let $w'_{\hat{\xi}}:=\mathfrak{p}_{\hat{\xi}}\circ\mathcal{E}_{\rm M'}=\sum_{j=1}^n\hat{\xi}_j w'_j$ and $\eta'_{\hat{\xi}}:=w'_{\hat{\xi}}|_\Upsilon$; then estimate (\ref{boundary curve est}) is valid. Define the function $\theta'$ by formula (\ref{theta}) with $\theta$, $\eta_{\hat{\xi}}$ replaced by $\theta'$, $\eta'_{\hat{\xi}}$. In view of (\ref{boundary curve est}) and the inequality $\partial_\gamma\eta_{\hat{\xi}}(s_0)\ne 0$, the estimate 
\begin{equation}
\label{theta estimate}
\|\theta'-\theta\|_{C^{l}(\Upsilon_{\mathcal{D}};\mathbb{R})}\le c_{\mathcal{E},l+1}t^{\frac{1}{3}}
\end{equation}
is valid for each $l=0,1,\dots$. In particular, $\partial_\gamma\theta'\ge c_0/2$ on $\Upsilon_{\mathcal{D}}$. In view of (\ref{boundary curve est}), by halving the diameter of $\mathcal{D}$, one can obtain that $\mathcal{D}\backslash\eta'_{\hat{\xi}}(\Upsilon)$ has two connected components while the restriction $\theta'$ on $\eta_{\hat{\xi}}^{'-1}(\eta'_{\hat{\xi}}(\Upsilon)\cap\mathcal{D})$ is injective. Chose some points $z_\pm\in\mathcal{D}_\pm$, then $|\eta_{\hat{\xi}}-z_\pm|\ge c_1>0$ on $\Gamma$. Denote the connected component of $\mathcal{D}\backslash\eta'_{\hat{\xi}}(\Upsilon)$ containing $z_\pm$ by $\mathcal{D}'_\pm$. From formula (\ref{connect comp}) and the argument principle (\ref{argument principle}) it follows that ${\rm wind}(\eta_{\hat{\xi}},z_+)=1$ and ${\rm wind}(\eta_{\hat{\xi}},z_-)=0$. Hence, estimate (\ref{boundary curve est}) and the inequality $|\eta_{\hat{\xi}}-z_\pm|\ge c_1>0$ imply ${\rm wind}(\eta'_{\hat{\xi}},z_+)=1$ and ${\rm wind}(\eta'_{\hat{\xi}},z_-)=0$. Note that the winding number ${\rm wind}(\eta'_{\hat{\xi}},\cdot)$ is constant on each connected component $\mathcal{D}'_\pm$. Now, the argument principle (\ref{argument principle}) shows that 
\begin{equation}
\label{connect comp 1}
\begin{split}
\mathfrak{p}_{\hat{\xi}}^{-1}&(\mathcal{D}'_-)\cap\mathcal{E}_{\rm M'}({\rm M}')=\varnothing, \\
\mathfrak{p}_{\hat{\xi}}^{-1}&(\mathcal{D}'_+)\cap\mathcal{E}_{\rm M'}({\rm M}')=\Pi_{\hat{\xi},\mathcal{D}}\cap\mathcal{E}_{\rm M'}({\rm int}{\rm M}'), \\
{\rm ord}&(w'_{\hat{\xi}}-z')=1 \qquad (z'\in\mathcal{D}'_+).
\end{split}
\end{equation}
In view of (\ref{connect comp 1}), the cylinder $\Pi_{\hat{\xi},\mathcal{D}}$ is projective for $\mathcal{E}_{\rm M'}({\rm M}')$. 

For points $z'\in\mathcal{D}$, introduce the coordinates $\Psi'(z'):=(s'(z'),\rho'(z'))$ by formulas (\ref{loc coordinates near b}), (\ref{loc coordinates near b inv}) with $z,s,\rho,\eta_{\hat{\xi}},\theta$  replaced by $z',s',\rho',\eta'_{\hat{\xi}},\theta'$, respectively. In view of(\ref{boundary curve est}) and (\ref{theta estimate}), there is the sufficiently small $\rho_0>0$ and an open segment $\tilde{\Gamma}\ni s_0$ of $\Upsilon_\mathcal{D}$ such that $\tilde{\Gamma}\times(-\rho_0,\rho_0)\subset \Psi(\mathcal{D})$ and the maps $\Psi^{'-1}$ are well-defined on $\tilde{\Gamma}\times(-\rho_0,\rho_0)$ for any $(M',g')\in\mathbb{B}(M,t)$. In what follows, we assume that $\tilde{\Gamma}\times(-\rho_0,\rho_0)$ coincides with $\Psi(\mathcal{D})$ and, in particular, $\tilde{\Gamma}$ coincides with $\Upsilon_\mathcal{D}$. Now, denote 
$$\varpi:=\Upsilon_\mathcal{D}\times[0,\rho_0)=\Psi(\mathcal{D}_+).$$ 
Due to (\ref{boundary curve est}) and (\ref{theta estimate}), there is the sub-domain $\tilde{\mathcal{D}}$ of $\mathcal{D}$ such that $\tilde{\mathcal{D}}\cap\mathcal{D}'_+$ is contained in $\Psi^{'-1}(\varpi)$ for any $(M',g')\in\mathbb{B}(M,t)$. In addition, (\ref{boundary curve est}), (\ref{theta estimate}) and (\ref{loc coordinates near b}), (\ref{loc coordinates near b inv}) yield the estimate
\begin{equation}
\label{near boundary map closeness}
\|\Psi^{'-1}-\Psi^{-1}\|_{C^{l}(\varpi;\mathbb{C})}+\|\Psi^{'-1}\circ\Psi\|_{C^{l}(\mathcal{D};\mathbb{C})}+\|\Psi^{-1}\circ\Psi'\|_{C^{l}(\tilde{\mathcal{D}}\cap\mathcal{D}'_+;\mathbb{C})}\le c_{\mathcal{E},l+1}t^{\frac{1}{3}}
\end{equation}
which is valid for each $l=0,1,\dots$. 

For $(s,\rho)\in \varpi$, $\rho>0$, denote the (unique) point $\xi\in\mathcal{E}({\rm M})$ with the projection $\mathfrak{p}_{\hat{\xi}}\xi=\Psi^{-1}(s,\rho)$ by $\Xi(s,\rho)$. Similarly, the (unique) point $\xi'\in\mathcal{E}_{\rm M'}({\rm M})$ with the projection $\mathfrak{p}_{\hat{\xi}}\xi'=\Psi^{'-1}(s,\rho)$ by $\Xi_{\rm M'}(s,\rho)$. Put $\mathfrak{A}_{\rm M'}(\xi)=\Xi_{\rm M'}\circ\Xi^{-1}(\xi)$ for $\xi\in\Pi_{\hat{\xi},\mathcal{D}}\cap\mathcal{E}({\rm M})$. Due to (\ref{connect comp 1}), the map $\mathfrak{A}_{\rm M'}$ is well-defined on $\Pi_{\hat{\xi},\mathcal{D}}\cap\mathcal{E}({\rm M})$ while the inverse map $\mathfrak{A}^{'-1}$ is well-defined at least on $\Pi_{\hat{\xi},\tilde{\mathcal{D}}}\cap\mathcal{E}_{\rm M'}({\rm M}')$. 

Let $\xi=\xi(z)$ be a point of $\mathcal{E}({\rm M})$ with the projection $z:=\mathfrak{p}_{\hat{\xi}}\xi$ and $\xi'=\xi(z')$ be a point of $\mathcal{E}_{\rm M'}({\rm M}')$ with the projection $z':=\mathfrak{p}_{\hat{\xi}}\xi'$. In the rest of the paragraph, we assume that the projections $z$ and $z'$ are related by $\Psi(z)=\Psi'(z')=:(s,\rho)\in\varpi$. Then $\xi=\Xi(s,\rho)$, $\xi'=\Xi'(s,\rho)=\mathfrak{A}_{\rm M'}(\xi)$. In view of the generalized argument principle (\ref{GAP application}), we have 
\begin{equation}
\label{GAP near boundary 1}
\partial_{z'}^{l}\xi'_k=\frac{1}{2\pi i}\partial_{z'}^{l}\int\limits_{\Upsilon}\frac{\eta'_{k}d\eta'_{\hat{\xi}}}{(\eta'_{\hat{\xi}}-z')^{l+1}}=\frac{l!}{2\pi i}\int\limits_{\Upsilon}\frac{\eta'_{k}d\eta'_{\hat{\xi}}}{(\eta'_{\hat{\xi}}-z')^{l+1}},
\end{equation}
where $l=0,1,\dots$. For $\sigma'\in\Upsilon$, denote $\mu':=\eta'_{\hat{\xi}}(\sigma')$. Let $L'_0$ be the tangent line to the curve $\eta'_{\hat{\xi}}(\Upsilon)$ in $\mathbb{C}$ at the point $\eta'_{\hat{\xi}}(s_0)$. Then $\zeta':=\eta'_{\hat{\xi}}(s'(z'))$ is a point of the curve $\eta'_{\hat{\xi}}(\Upsilon)$ which has the same orthogonal projection on $L'_0$ as $z'$. Put $s':=s'(z')$ and
\begin{equation}
\label{var kappa}
\varkappa':=\Re\frac{\mu'-\eta'_{\hat{\xi}}(s_0)}{\partial_\gamma \eta'_{\hat{\xi}}(s_0)}=\theta'(\sigma'), \qquad \kappa':=\Re\frac{\zeta'-\eta'_{\hat{\xi}}(s_0)}{\partial_\gamma \eta'_{\hat{\xi}}(s_0)}=\Re\frac{z'-\eta'_{\hat{\xi}}(s_0)}{\partial_\gamma \eta'_{\hat{\xi}}(s_0)}=\theta'(s').
\end{equation}
We also use the notation obtained by omitting primes in the formulas above. For $\eta'_{k}\circ\theta^{'-1}$, the Tailor formula 
$$\eta'_{k}\circ\theta^{'-1}(\varkappa')=\sum_{q=0}^{l}a'_q(\kappa')(\varkappa'-\kappa')^q+\check{\eta}'_{k,l}(\sigma',s')$$
is valid, where
\begin{equation}
\label{Taylor 2}
a'_q(\kappa'):=\frac{\partial^{q}_{\varkappa'}[\eta'_{k}\circ\theta^{'-1}](\kappa')}{q!}, \quad \check{\eta}'_{k,l}(\sigma',s'):=\frac{1}{l!}\int\limits_{\kappa'}^{\varkappa'}(\varkappa'-y)^l\partial^{l+1}_{\varkappa'}[\eta'_{k}\circ\theta^{'-1}](y)dy.
\end{equation}
In view of (\ref{var kappa}), the sum in the right-hand side is a polynomial $\mathcal{P}'_{k,l}(\mu'-\zeta',\overline{\mu'-\zeta'},s'(z'))$ in $\mu'-\zeta'$, $\overline{\mu'-\zeta'}$ of order $l$ with coefficients depending on $s'(z')$. So, for $\mu'\in\eta'_{\hat{\xi}}(\Upsilon)\cap\mathcal{D}$, formula (\ref{var kappa}) takes the form
\begin{equation}
\label{Taylor 3}
w'_k\circ w^{'-1}_{\hat{\xi}}(\mu')=\eta'_{k}(\sigma')=\mathcal{P}'_{k,l}(\mu'-\zeta',s')+\check{\eta}'_{k,l}(\sigma',s')
\end{equation}
Note that the function $w'_k\circ w^{'-1}_{\hat{\xi}}$ is well-defined and holomorphic on $\mathcal{D}'_+$ and smooth up to $\eta'_{\hat{\xi}}(\Upsilon)\cap\mathcal{D}$ while $\mathcal{P}'_{k,l}(\cdot,s')$ is its $l$-th Taylor polynomial at the point $\mu'$. Due to this, $\mathcal{P}'_{k,l}(\cdot,s')$ contains no terms proportional to $\overline{\mu'-\zeta'}$. Substituting (\ref{Taylor 3}) into (\ref{GAP near boundary 1}) and applying Cauchy's differentiation formula, we obtain
\begin{equation}
\label{GAP near boundary 2}
\begin{split}
\partial_{z'}^{l}\xi'_k=\frac{l!}{2\pi i}\int\limits_{\eta'_{\hat{\xi}}(\Upsilon)}\frac{\mathcal{P}'_{k,l}(\mu'-\zeta',s')d\mu'}{(\mu'-z')^{l+1}}+\frac{l!}{2\pi i}\int\limits_{\Upsilon}\frac{\tilde{\eta}'_{k,l}(\cdot,s')d\eta'_{\hat{\xi}}}{(\eta'_{\hat{\xi}}-z')^{l+1}}=\\
=\partial_{z'}^{l}\mathcal{P}'_{k,l}(\mu'-\zeta',s')+\frac{l!}{2\pi i}\int\limits_{\Upsilon}\frac{\tilde{\eta}'_{k,l}(\cdot,s')d\eta'_{\hat{\xi}}}{(\eta'_{\hat{\xi}}-z')^{l+1}}
\end{split}
\end{equation}
where 
\begin{equation}
\label{Taylor 4}
\tilde{\eta}'_{k,l}(\sigma',s'):=\eta'_{k}(\sigma')-\mathcal{P}'_{k,l}(\mu'-\zeta',s')
\end{equation}
does coincide with $\check{\eta}'_{k,l}(\sigma',s')$ for all $\sigma',s'\in\tilde{\Gamma}$. Note that formulas (\ref{GAP near boundary 1})-(\ref{GAP near boundary 2}) remain valid with omitted primes. In particular, we have 
\begin{equation}
\label{GAP near boundary 3}
\begin{split}
\partial_{z'}^{l}\xi'_k-\partial_{z}^{l}\xi_k=\partial_{z'}^{l}\mathcal{P}'_{k,l}(\mu'-\zeta',s)-\partial_{z}^{l}\mathcal{P}_{k,l}(\mu-\zeta,s)+\\
+\frac{l!}{2\pi i}\int\limits_{\Upsilon}\Big[\frac{\tilde{\eta}'_{k,l}(\cdot,s)d\eta'_{\hat{\xi}}}{(\eta'_{\hat{\xi}}-z')^{l+1}}-\frac{\tilde{\eta}_{k}(\cdot,s)d\eta_{\hat{\xi}}}{(\eta_{\hat{\xi}}-z)^{l+1}}\Big].
\end{split}
\end{equation}
Estimates (\ref{boundary curve est}) and (\ref{theta estimate}) imply 
\begin{equation}
\label{Taylor est 1}
\|a'_q-a_q\|_{C(\Upsilon_{\mathcal{D}};\mathbb{C})}\le c_{\mathcal{E},q+1}t^{\frac{1}{3}}, \quad \|\mathcal{P}'_{k,l}-\mathcal{P}_{k,l}\|_{C(B\times\Upsilon_{\mathcal{D}};\mathbb{C})}\le c_{\mathcal{E},l+1}t^{\frac{1}{3}},
\end{equation}
where $B$ is an arbitrary fixed bounded domain in $\mathbb{C}$. Due to (\ref{boundary curve est}), the curve $\eta'_{\hat{\xi}}(\Upsilon)$ is contained in an arbitrarily fixed neighbourhood of $\eta_{\hat{\xi}}(\Upsilon)$ for sufficiently small $t$. Thus, the argument $\mu'-\zeta'$ ($\mu-\zeta$) of $\mathcal{P}'_{k,l}$ ($\mathcal{P}_{k,l}$) is contained in some fixed bounded domain $B$ for any $(M',g')\in\mathbb{B}(M,t)$ and $s'\in\Upsilon_{\mathcal{D}}$. Therefore, formulas (\ref{Taylor 4}) and (\ref{Taylor est 1}) imply 
\begin{equation}
\label{Taylor est 2}
\|\tilde{\eta}'_{k,l}-\tilde{\eta}'_{k,l}\|_{C(\Upsilon\times\Upsilon_{\mathcal{D}};\mathbb{C})}\le c_{\mathcal{E},l+1}t^{\frac{1}{3}}.
\end{equation}
As a corollary of (\ref{boundary curve est}), (\ref{near boundary map closeness}), (\ref{Taylor est 2}), we obtain
\begin{equation}
\label{int est 1}
\Big|\int\limits_{\Upsilon\backslash\Gamma_\delta(s')}\Big[\frac{\tilde{\eta}'_{k,l}(\cdot,s)\partial_\gamma\eta'_{\hat{\xi}}}{(\eta'_{\hat{\xi}}-z')^{l+1}}-\frac{\tilde{\eta}_{k}(\cdot,s)\partial_\gamma\eta_{\hat{\xi}}}{(\eta_{\hat{\xi}}-z)^{l+1}}\Big]\Big|\le c_{\mathcal{E},l+1}t^{\frac{1}{3}},
\end{equation}
where $\delta>0$ can be chosen arbitrarily small and $\Gamma_\delta(s')$ is a $\delta$-neighbourhood of $s'$ in $\Upsilon$.

The change of variables in the integral in (\ref{Taylor 2}) yields 
\begin{equation}
\label{remainder 1}
\check{\eta}'_{k,l}(\sigma',s'):=(\varkappa'-\kappa')^{l+1}\acute{\eta}'_{k,l}(\sigma',s')
\end{equation}
where
\begin{equation}
\label{remainder 2}
\acute{\eta}'_{k,l}(\sigma',s'):=\frac{1}{l!}\int\limits_{0}^{1}(1-\tau)^l\partial^{l+1}_{\varkappa'}[\eta'_{k}\circ\theta^{'-1}]\big(\kappa'+(\varkappa'-\kappa')\tau\big)d\tau
\end{equation}
Formulas (\ref{remainder 1}), (\ref{remainder 2}) remain valid with omitted primes. From (\ref{remainder 2}), (\ref{var kappa}) and (\ref{boundary curve est}), (\ref{theta estimate}), it follows that 
\begin{equation}
\label{remainder est 1}
\|\acute{\eta}'_{k,l}-\acute{\eta}_{k,l}\|_{C(\Upsilon_{\mathcal{D}}\times\Upsilon_{\mathcal{D}};\mathbb{C})}\le c_{\mathcal{E},l+2}t^{\frac{1}{3}}.
\end{equation}
Next, in view of (\ref{var kappa}), we have
\begin{equation}
\label{fraction 1}
\frac{\varkappa'-\kappa'}{\eta'_{\hat{\xi}}(\sigma')-z'}=\frac{\Re a'(\sigma',s,\rho)}{a(\sigma',s,\rho)},
\end{equation}
where
$$a'(\sigma',s,\rho)=\frac{\eta'_{\hat{\xi}}(\sigma')-\Psi^{'-1}(s,\rho)}{\partial_\gamma \eta'_{\hat{\xi}}(s_0)}.$$
Note that
\begin{equation}
\label{fraction 3}
a'(l,l,0)=0, \quad \partial_{\rho}a'(\sigma',s,\rho)=-\frac{\partial_\rho \Psi^{'-1}(s,\rho)}{\partial_\gamma \eta'_{\hat{\xi}}(s_0)}=-i, \quad \partial_{\gamma}a'(\cdot,s,\rho)=\frac{\partial_{\gamma}\eta'_{\hat{\xi}}}{\partial_\gamma \eta'_{\hat{\xi}}(s_0)}
\end{equation}
due to (\ref{loc coordinates near b inv}). Hence, the formula
$$a'(\sigma,s,\rho)=\int_{s}^{\sigma}\partial_{\tau}a'(\tau,s,0)d\tau-i\rho=\tilde{a}'(\sigma,s)(\sigma-s)-i\rho, $$
holds with
\begin{equation}
\label{fraction 5}
\tilde{a}'(\sigma,s):=\partial_{\sigma'}a'(s,s,0)+\frac{1}{\sigma-s}\int_s^\sigma\int_s^{\tau}\partial^2_{\varsigma}a'(\varsigma,s,0)d\varsigma d\tau.
\end{equation}
Here, for simplicity, the points of $\Upsilon$ and their natural parameters are denoted by the same symbols $\tau$, $\varsigma$. Formulas (\ref{fraction 1})-(\ref{fraction 5}) remain valid with omitted primes. Also, estimates (\ref{boundary curve est}) and (\ref{near boundary map closeness}) imply
$$\|a'-a\|_{C^l(\Upsilon_{\mathcal{D}}\times\Upsilon_{\mathcal{D}}\times(-\rho_0,\rho_0);\mathbb{C})}\le c_{\mathcal{E},l+1}t^{\frac{1}{3}}.$$
Thus, function (\ref{fraction 5}) obeys
\begin{equation}
\label{fraction est 1}
\|\tilde{a}'-\tilde{a}\|_{C(\Upsilon_{\mathcal{D}}\times\Upsilon_{\mathcal{D}};\mathbb{C})}\le c_{\mathcal{E},3}t^{\frac{1}{3}}.
\end{equation}
In view of (\ref{fraction 3}), (\ref{fraction est 1}), for arbitrarily small $\epsilon>0$, one can choose $\delta>0$ and the diameter of $\mathcal{D}$ sufficiently small to obtain $|\tilde{a}(\sigma,s)-1|\le\epsilon/2$ for any $s\in\Upsilon_{\mathcal{D}}$ and $\sigma\in\Gamma_\delta(s)$. Then, for sufficiently small $t$ and any $(M',g')\in\mathbb{B}(M,t)$, we have $|\tilde{a}'(\sigma,s)-1|\le\epsilon$ for any $s\in\Upsilon_{\mathcal{D}}$ and $\sigma\in\Gamma_\delta(s)$. Therefore, for such $s$ and $\sigma$, we have
$$|\tilde{a}(\sigma,s)(\sigma-s)-i\rho|\ge cr, \qquad |\tilde{a}'(\sigma,s)(\sigma-s)-i\rho|\ge cr,$$
where $r=\sqrt{(\sigma-s)^2+\rho^2}$. Thus, estimate (\ref{fraction est 1}) implies
\begin{equation}
\label{fraction est 2}
\begin{split}
\Big|\frac{\varkappa'-\kappa'}{\eta'_{\hat{\xi}}(\sigma)-z'}-\frac{\varkappa-\kappa}{\eta_{\hat{\xi}}(\sigma)-z}\Big|=\Big|\frac{\Re \tilde{a}'(\sigma,s)(\sigma-s)}{\tilde{a}'(\sigma,s)(\sigma-s)-i\rho}-\frac{\Re \tilde{a}'(\sigma,s)(\sigma-s)}{\tilde{a}(\sigma,s)(\sigma-s)-i\rho}\Big|=\\
=\Big|\frac{(\sigma-s)\rho(\tilde{a}'(\sigma,s)-\tilde{a}(\sigma,s))}{(\tilde{a}'(\sigma,s)(\sigma-s)-i\rho)(\tilde{a}(\sigma,s)(\sigma-s)-i\rho)}\Big|\le c_{\mathcal{E},3}t^{\frac{1}{3}}.
\end{split}
\end{equation}
In vie of (\ref{remainder 1}), the integrand in (\ref{GAP near boundary 3}) can be rewritten as
\begin{align*}
\frac{\tilde{\eta}'_{k,l}(\sigma,s')\partial_\gamma\eta'_{\hat{\xi}}}{(\eta'_{\hat{\xi}}-z')^{l+1}}-\frac{\tilde{\eta}_{k}(\sigma,s)\partial_\gamma\eta_{\hat{\xi}}}{(\eta_{\hat{\xi}}-z)^{l+1}}&=\\
=\acute{\eta}'_{k,l}(\sigma,s)\partial_\gamma\eta'_{\hat{\xi}}\Big[\frac{\varkappa'-\kappa'}{\eta'_{\hat{\xi}}(\sigma)-z'}\Big]^{l+1}&-\acute{\eta}_{k,l}(\sigma,s)\partial_\gamma\eta_{\hat{\xi}}\Big[\frac{\varkappa-\kappa}{\eta_{\hat{\xi}}(\sigma)-z}\Big]^{l+1}.
\end{align*}
Then estimates (\ref{remainder est 1}) and (\ref{fraction est 2}) yields
\begin{align*}
\Big|\frac{\tilde{\eta}'_{k,l}(\sigma,s')\partial_\gamma\eta'_{\hat{\xi}}}{(\eta'_{\hat{\xi}}-z')^{l+1}}-\frac{\tilde{\eta}_{k}(\sigma,s)\partial_\gamma\eta_{\hat{\xi}}}{(\eta_{\hat{\xi}}-z)^{l+1}}\Big|\le c_{\mathcal{E},l+3}t^{\frac{1}{3}}
\end{align*}
for any $s\in\Upsilon_{\mathcal{D}}$ and $\sigma\in\Gamma_\delta(s)$. In particular,
\begin{equation}
\label{int est 2}
\Big|\int\limits_{\Gamma_\delta(s')}\Big[\frac{\tilde{\eta}'_{k,l}(\sigma,s)d\eta'_{\hat{\xi}}}{(\eta'_{\hat{\xi}}-z')^{l+1}}-\frac{\tilde{\eta}_{k}(\sigma,s)d\eta_{\hat{\xi}}}{(\eta_{\hat{\xi}}-z)^{l+1}}\Big]\Big|\le c_{\mathcal{E},l+3}t^{\frac{1}{3}}.
\end{equation}
Combining (\ref{GAP near boundary 3}) and inequalities (\ref{Taylor est 1}), (\ref{int est 1}), (\ref{int est 2}), we arrive at
\begin{equation}
\label{closeness near boundary fin 1}
|\partial_{z'}^{l}\xi'_k-\partial_{z}^{l}\xi_k|\le c_{\mathcal{E},l+3}t^{\frac{1}{3}}.
\end{equation}
For $l=0$, estimate (\ref{closeness near boundary fin 1}) takes the form
\begin{equation}
\label{closeness near boundary fin 2}
|\mathfrak{A}_{\rm M'}(\xi)-\xi|\le c_{\mathcal{E},3}t^{\frac{1}{3}}, \qquad \xi\in\Pi_{\hat{\xi},\mathcal{D}}\cap\mathcal{E}({\rm M}).
\end{equation}
Also, estimates (\ref{near boundary map closeness}) and (\ref{closeness near boundary fin 1}) imply
\begin{equation}
\label{closeness near boundary fin 3}
\|\Xi_{\rm M'}-\Xi\|_{C^l(\varpi;\mathbb{C}^n)}\le c_{\mathcal{E},l+3}t^{\frac{1}{3}}.
\end{equation}
So, for any point $\xi_0$ of $\mathcal{E}(\Upsilon)$ there are the cylinder $\Pi_{\hat{\xi},\mathcal{D}}$ and its sub-cylinder $\Pi_{\hat{\xi},\tilde{\mathcal{D}}}$ containing $\xi_0$, the set $\varpi=\Upsilon_{\mathcal{D}}\times[0,\rho_0)$, where $\rho_0>0$ and $\Upsilon_{\mathcal{D}}$ is a segment of $\Upsilon$, and the diffeomorphism $\Xi: \ \varpi\to\mathcal{E}({\rm M})\cap\Pi_{\hat{\xi},\mathcal{D}}$ such that $\Xi(s,0)=\mathcal{E}(s)$ and, for sufficiently small $t\in(0,t(\xi_0))$ and any $(M',g')\in\mathbb{B}(M,t)$, the following properties are valid:
\begin{enumerate}[(1'')]
\item $\Pi_{\hat{\xi},\mathcal{D}}$ is projective for both $\mathcal{E}({\rm M})$ and $\mathcal{E}_{\rm M'}({\rm M}')$.
\item there is the diffeomorphism $\Xi_{\rm M'}: \ \varpi\to\mathcal{E}_{\rm M'}({\rm M})\cap\Pi_{\hat{\xi},\mathcal{D}}$ satisfying estimate (\ref{closeness near boundary fin 3}) and such that $\Xi'_{\rm M'}(s,0)=\mathcal{E}_{\rm M'}(s)$.
\item the diffeomorphism $\mathfrak{A}_{\rm M'}=\Xi'\circ\Xi^{-1}$ obeys (\ref{closeness near boundary fin 2}) and the inverse map $\mathfrak{A}_{\rm M'}^{-1}$ is well-defined on $\mathcal{E}_{\rm M'}({\rm M}')\cap\Pi_{\hat{\xi},\tilde{\mathcal{D}}}$.
\end{enumerate}

\subsection{Completing the proof of (\ref{emb closeness}).} 
\label{ssec Hausdorff stability end of proof}
Let $\xi_0\in\Upsilon$. As showed in the previous paragraph, there is the cylinder $\Pi_{\hat{\xi},\mathcal{D}}=\Pi_{\hat{\xi}(\xi_0),\mathcal{D}(\xi_0)}$ possessing properties (1'')-(3''). By decreasing the diameter of its base $\mathcal{D}(\xi_0)$ one can ensure that (3'') is satisfied with $\tilde{\mathcal{D}}=\mathcal{D}(\xi_0)$. The cylinders $\{\Pi_{\hat{\xi}(\xi_0),\mathcal{D}(\xi_0)}\}_{\xi_0\in\Upsilon}$ constitute an open cover of $\mathcal{E}(\Upsilon)$. Since $\mathcal{E}(\Upsilon)$ is a compact subset of $\mathbb{C}^n$, one can chose a finite sub-cover $\{\Pi_{\hat{\xi}(\xi_0(k)),\mathcal{D}(\xi_0(k))}\}_{k=1}^{K_1}$ of $\mathcal{E}(\Upsilon)$. Next, the set 
$$\widetilde{\mathcal{E}({\rm M})}:=\mathcal{E}({\rm M})\backslash(\bigcup_{k=1}^{K_1}\Pi_{\hat{\xi}(\xi_0(k)),\mathcal{D}(\xi_0(k))})$$
is compact in $\mathbb{C}^n$. 

Now, let $\xi_0\in \widetilde{\mathcal{E}({\rm M})}$. As showed at the beginning of the section, there is the cylinder $\Pi_{\hat{\xi},\mathcal{D}}=\Pi_{\hat{\xi}(\xi_0),\mathcal{D}(\xi_0)}$ possessing properties (1')-(3'). The cylinders $\{\Pi_{\hat{\xi}(\xi_0),\mathcal{D}(\xi_0)}\}_{\xi_0\in\widetilde{\mathcal{E}({\rm M})}}$ constitute an open cover of the compact subset $\widetilde{\mathcal{E}({\rm M})}$ of $\mathbb{C}^n$. Thus, one can chose a finite sub-cover $\{\Pi_{\hat{\xi}(\xi_0(k)),\mathcal{D}(\xi_0(k))}\}_{k=K_1+1}^{K_2}$ of $\widetilde{\mathcal{E}({\rm M})}$. So, $\{\Pi_{\hat{\xi}(\xi_0(k)),\mathcal{D}(\xi_0(k))}\}_{k=1}^{K_2}$ is a finite open cover of $\mathcal{E}({\rm M})$. Then, considering only the cylinders from this cover, one can assume that estimates (\ref{closenes out of b 1}) and (\ref{closeness near boundary fin 2}) (provided by properties (3') and (3''), respectively) are valid for sufficiently small $t\in (0,t_0)$ with constants depending only on $({\rm M},{\rm g})$ and $\mathcal{E}$. 

Put $\mathfrak{O}:=\bigcup_{k=1}^{K_2}\Pi_{\hat{\xi}(\xi_0(k)),\mathcal{D}(\xi_0(k))}$. Each point $\xi\in\mathcal{E}({\rm M})$ is contained in some cylinder $\Pi_{\hat{\xi}(\xi_0(k)),\mathcal{D}(\xi_0(k))}$, and, due to estimate (\ref{closenes out of b 1}) or (\ref{closeness near boundary fin 2}) (provided by (3') or (3'')), it is contained in $c_{\mathcal{E},l+3}t^{\frac{1}{3}}-$neighbourhood (in $\mathbb{C}^n$) of the point $\xi'=\mathfrak{A}^{(\xi_0(k))}_{\rm M'}(\xi)$ of $\mathcal{E}_{\rm M'}({\rm M}')$. Conversely, each point $\xi'\in\mathcal{E}_{\rm M'}({\rm M}')\cap\mathfrak{O}$ is contained in some cylinder $\Pi_{\hat{\xi}(\xi_0(k)),\mathcal{D}(\xi_0(k))}$, and, due to (\ref{closenes out of b 1}) or (\ref{closeness near boundary fin 2}), it is contained in $c_{\mathcal{E},l+3}t^{\frac{1}{3}}-$neighbourhood of the point $\xi=(\mathfrak{A}^{(\xi_0(k))}_{\rm M'})^{-1}(\xi)$ of $\mathcal{E}({\rm M})$. So, for any $t\in (0,t_0)$ and $(M',g')\in\mathbb{B}(M,t)$, the image $\mathcal{E}({\rm M})$ is contained in $c_{\mathcal{E},3}t^{\frac{1}{3}}-$neighbourhood of $\mathcal{E}_{\rm M'}({\rm M}')$ while the set $\mathcal{E}_{\rm M'}({\rm M}')\cap\mathfrak{O}$ is contained in $c_{\mathcal{E},3}t^{\frac{1}{3}}-$neighbourhood of $\mathcal{E}({\rm M})$. By definition of the Hausdorff distance, formula (\ref{emb closeness}) is valid with $\mathcal{E}_{\rm M'}({\rm M}')$ replaced by $\mathcal{E}_{\rm M'}({\rm M}')\cap\mathfrak{O}$. Since $\mathcal{E}_{\rm M'}({\rm M}')$ is path-connected and $\mathcal{E}_{\rm M'}(\Upsilon)$ is contained in $\mathfrak{O}$ due to (\ref{emb b closeness}), by decreasing $t_0$, we obtain
\begin{equation}
\label{unioun of cylinders}
\mathcal{E}_{\rm M'}({\rm M}')\cap\mathfrak{O}=\mathcal{E}_{\rm M'}({\rm M}') \qquad \forall (M',g')\in\mathbb{B}(M,t_0).
\end{equation}
By this, we complete the proof of (\ref{emb closeness}). Also due to (\ref{unioun of cylinders}) and properties (1'), (1''), each point $\xi'\in \mathcal{E}_{\rm M'}({\rm M}')$ is contained in some projective (for $\mathcal{E}_{\rm M'}({\rm M}')$) cylinder $\Pi_{\hat{\xi}(\xi_0(k)),\mathcal{D}(\xi_0(k))}$. Therefore, for any $(M',g')\in\mathbb{B}(M,t_0)$, the map $\mathcal{E}_{\rm M'}$ is a projective embedding and $\mathcal{E}_{\rm M'}({\rm M}')$ is a surface. Also, since $w'_k=\iota_{M,M'}w_k\in\mathcal{H}_+({\rm M}')$, the embedding $\mathcal{E}_{\rm M'}$ is symmetric. These facts justify the use of the term `induced embedding' for the map $\mathcal{E}_{\rm M'}$.

\section{Constructing near-isometric diffeomorphism between $\mathcal{E}({\rm M})$ and $\mathcal{E}_{\rm M'}({\rm M}')$}
\label{sec map alpha}
Let $(M,g)$ be a non-orientable surface with the boundary $\Gamma$, $({\rm M},{\rm g})$ be its covering space, and $\mathcal{E}=\{w_1,\dots,w_n\}$ be a fixed embedding of $({\rm M},{\rm g})$. Let also  $t\in (0,t_0)$ (where $t_0=t_0(({\rm M},{\rm g},\mathcal{E})$), $(M',g')$ be a surface from $\mathbb{B}(M,t)$, and $\mathcal{E}_{M'}$ be the embedding of its covering space $({\rm M}',{\rm g}')$ induced by $\mathcal{E}$. The images $\mathcal{E}({\rm M})$ and $\mathcal{E}'({\rm M}')$ are the surface in $\mathbb{C}^n$ endowed with the metrics induced by $\mathbb{C}^n$. In this section, we construct the diffeomorphism $\alpha_{\rm M, M'}: \ \mathcal{E}({\rm M})\to\mathcal{E}_{M'}({\rm M}')$ which is near-isometric for small $t$. 

The map $\alpha_{\rm M, M'}$ is constructed in the following way. First, define the positive function $\mathscr{E}^{int}_{\rm M'}: \ \mathcal{E}({\rm M})\times\mathcal{E}_{M'}({\rm M}')\to [0,+\infty)$ by 
\begin{equation}
\label{minimizing function int}
\mathscr{E}^{int}_{\rm M'}(\xi,\xi'):=|\xi'-\xi|^2.
\end{equation}
For each $\xi\in\mathcal{E}({\rm M})$, the minimum of $\mathscr{E}^{0}_{\rm M'}(\xi,\cdot)$ is attained at a point $\xi'$ of $\mathcal{E}_{M'}({\rm M}')$ closest to $\xi$ in $\mathbb{C}^n$. Due to the symmetry of the embeddings $\mathcal{E}$ and $\mathcal{E}_{\rm M'}$, we have $\overline{\mathcal{E}({\rm M})}=\mathcal{E}({\rm M})$, $\overline{\mathcal{E}_{\rm M'}({\rm M}')}=\mathcal{E}_{\rm M'}({\rm M}')$. Hence, we have
$$\mathscr{E}^{int}_{\rm M'}(\overline{\xi},\overline{\xi'})=\mathscr{E}^{int}_{\rm M'}(\xi,\xi'), \qquad \xi\in\mathcal{E}({\rm M}), \ \xi'\in\mathcal{E}_{M'}({\rm M}').$$
Next, for $\xi\in\mathcal{E}({\rm M})$, let $L_\xi$ be a shortest geodesics in $\mathcal{E}({\rm M})$ connecting $\xi$ with the boundary $\mathcal{E}(\Upsilon)$ of $\mathcal{E}({\rm M})$, $r=r(\xi)$ be its length, and $\varsigma:=\varsigma(\xi)$ be the point of $\Upsilon$ such that $\mathcal{E}(\tau)$ is the end of the curve $L_\xi$. Then $(\varsigma,r)$ are semi-geodesic coordinates on $\mathcal{E}({\rm M})$, which are regular in some near-boundary strip $r<r_0$. The semi-geodesic coordinates $(\varsigma',r')$ on $\mathcal{E}_{M'}({\rm M}')$ are defined in the similar way. In the following, we show that the numbers $t_0$ and $r_0$ can be chosen so small that the coordinates $(\varsigma',r')$ are regular in the strip $r'<r_0$ for any $(M',g')\in\mathbb{B}(M,t_0)$. Introduce the positive function $\mathscr{E}^{b}_{\rm M'}: \ \mathcal{E}({\rm M})\times\mathcal{E}_{M'}({\rm M}')\to [0,+\infty)$ by 
\begin{equation}
\label{minimizing function near b}
\mathscr{E}^{b}_{\rm M'}(\xi,\xi'):={\rm dist}_{\Upsilon}(\varsigma(\xi),\varsigma'(\xi'))^2+|r'(\xi')-r(\xi)|^2.
\end{equation}
For each $\xi\in\mathcal{E}({\rm M})$ such that $r(\xi)<r_0$, the minimum of $\mathscr{E}^{b}_{\rm M'}(\xi,\cdot)$ is attained at the point $\xi'$ whose semi-geodesics coordinates obey $\varsigma'(\xi')=\varsigma(\xi)$ and $r'(\xi')=r(\xi)$. Due to the symmetry of the embeddings $\mathcal{E}$ and $\mathcal{E}_{\rm M'}$, their images obey $\overline{\mathcal{E}({\rm M})}=\mathcal{E}({\rm M})$, $\overline{\mathcal{E}_{\rm M'}({\rm M}')}=\mathcal{E}_{\rm M'}({\rm M}')$. Hence, we have
\begin{align*}
\varsigma(\overline{\xi})=\tau(\varsigma(\xi)), \quad r(\overline{\xi})=r(\xi)& \qquad (\xi\in\mathcal{E}({\rm M}')),\\
\varsigma'(\overline{\xi})=\tau'(\varsigma'(\xi)), \quad r'(\overline{\xi})=r'(\xi)& \qquad (\xi'\in\mathcal{E}_{M'}({\rm M}')).
\end{align*}
Therefore,
$$\mathscr{E}^{b}_{\rm M'}(\overline{\xi},\overline{\xi'})=\mathscr{E}^{b}_{\rm M'}(\xi,\xi'), \qquad \xi\in\mathcal{E}({\rm M}), \ \xi'\in\mathcal{E}_{M'}({\rm M}').$$
Let $\chi$ be a smooth cutoff function on $\mathcal{E}({\rm M})$ equal to one in the strip $r\le r_0/3$ and to zero in the domain $r\ge 2r_0/3$. We introduce the positive function $\mathscr{E}_{\rm M'}: \ \mathcal{E}({\rm M})\times\mathcal{E}_{M'}({\rm M}')\to [0,+\infty)$ by 
\begin{equation}
\label{minimizing function}
\mathscr{E}_{\rm M'}(\xi,\xi'):=(1-\chi)\mathscr{E}^{int}_{\rm M'}+\chi\mathscr{E}^{b}_{\rm M'}.
\end{equation}
Then $\mathscr{E}_{\rm M'}$ obeys the symmetry condition
\begin{equation}
\label{minimizing function symmetry}
\mathscr{E}_{\rm M'}(\overline{\xi},\overline{\xi'})=\mathscr{E}_{\rm M'}(\xi,\xi'), \qquad \xi\in\mathcal{E}({\rm M}), \ \xi'\in\mathcal{E}_{M'}({\rm M}').
\end{equation}
Along with $\mathscr{E}_{\rm M'}$, we consider the `unperturbed' function $\mathscr{E}_{\rm M}$ obtained by replacing ${\rm M'}$, $\varsigma'$, $r'$ by ${\rm M}$, $\varsigma$, $r$ in (\ref{minimizing function int}), (\ref{minimizing function near b}), (\ref{minimizing function}). Note that, for any $\xi\in\mathcal{E}({\rm M})$, the function $\mathscr{E}_{\rm M}(\cdot,\xi)$ attains minimum at the (unique) point $\xi'=\xi$. For $\xi\in\mathcal{E}({\rm M})$, we define $\alpha_{\rm M, M'}(\xi)$ as the point of minimum of the function $\mathscr{E}_{\rm M'}(\xi,\cdot)$ on $\mathcal{E}_{M'}({\rm M}')$. By the use of (\ref{closenes out of b}) and (\ref{closeness near boundary fin 3}), we prove that $t_0$ can be chosen so small that, for any $(M',g')\in\mathbb{B}(M,t)$ and $\xi\in\mathcal{E}({\rm M})$, the minimum point $\alpha_{\rm M, M'}(\xi)$ is unique and the map $\xi\mapsto\alpha_{\rm M, M'}(\xi)$ is a diffeomorphism from $\mathcal{E}({\rm M})$ onto $\mathcal{E}_{M'}({\rm M}')$. Also, we prove that the map $\alpha_{\rm M, M'}$ is near-isometric for small $t\in (0,t_0)$ and any $(M',g')\in\mathbb{B}(M,t)$. In view of the symmetry (\ref{minimizing function symmetry}) of the function $\mathscr{E}_{\rm M'}$, the map $\alpha_{\rm M, M'}$ satisfies 
\begin{equation}
\label{alpha symmetry}
\alpha_{\rm M, M'}(\overline{\xi})=\overline{\alpha_{\rm M, M'}(\xi)}.
\end{equation}

Finally, we introduce the diffeomorphism
\begin{equation}
\label{beta}
\beta_{\rm M, M'}:=\mathcal{E}_{M'}^{-1}\circ\alpha_{\rm M, M'}\circ\mathcal{E}: \ {\rm M}\to{\rm M}',
\end{equation}
where $\mathcal{E}_{M'}^{-1}: \ \mathcal{E}_{M'}({\rm M'})\to M'$ is the map inverse to $\mathcal{E}_{M'}$. Since the maps $\mathcal{E}: \ {\rm M}\to \mathcal{E}({\rm M})$, $\mathcal{E}_{M'}: {\rm M}'\to \mathcal{E}_{M'}({\rm M}')$ are conformal and the diffeomorphism $\alpha_{\rm M, M'}: \ \mathcal{E}({\rm M})\to \mathcal{E}_{M'}({\rm M}')$ is near-isometric, the diffeomorphism $\beta_{\rm M, M'}$ is near-conformal for small $t\in (0,t_0)$ and any $(M',g')\in\mathbb{B}(M,t)$. Also, since the embeddings $\mathcal{E}$ and $\mathcal{E}_{M'}$ are symmetric, we have $\mathcal{E}\circ\tau=\overline{\mathcal{E}}$ and $\mathcal{E}_{M'}\circ\tau'=\overline{\mathcal{E}_{M'}}$, where $\tau$ and $\tau'$ are the involutions on ${\rm M}$ and ${\rm M}'$, respectively. Due to this, formula (\ref{alpha symmetry}) shows that $\beta_{\rm M, M'}$ is involution-preserving i.e.
\begin{equation}
\label{beta symmetry}
\beta_{\rm M, M'}\circ\tau=\tau'\circ\beta_{\rm M, M'}.
\end{equation}
In view of (\ref{beta symmetry}), one can introduce the diffeomorphism $\beta_{M, M'}: \ M\mapsto M'$ by requiring that 
$$\beta_{M, M'}\circ\pi=\pi'\circ\beta_{\rm M, M'}.$$ 
Since the projections $\pi: \ ({\rm M},{\rm g})\to (M,g)$ and $\pi': \ ({\rm M}',{\rm g}')\to (M',g')$ $\beta_{M, M'}$ are local isometric, the diffeomorphism $\beta_{M, M'}$ is near-conformal for small $t\in (0,t_0)$ and any $(M',g')\in\mathbb{B}(M,t)$. Due to this, $[(M',g')]$ is close to $[(M,g)]$ in Teichm\"uller distance. By this, we prove Theorem \ref{main theorem}.

\subsection{Constructing $\alpha_{\rm M, M'}$ in a zone separated from $\mathcal{E}(\Upsilon)$.} 
\label{ssec alpha separated from boundary}
Let $\xi_0\in\mathcal{E}({\rm M})$ and $\Pi_{\hat{\xi},\mathcal{D}}$ be a cylinder containing $\xi_0$ and possessing properties (1')-(3'). We assume that $\Pi_{\hat{\xi},\mathcal{D}}$ does not intersect near-boundary strip $r>r_0$ of $\mathcal{E}({\rm M})$. Then the function $\mathscr{E}_{\rm M'}$ does coincide with $\mathscr{E}^{int}_{\rm M'}$ on $(\mathcal{E}({\rm M})\cap\Pi_{\hat{\xi},\mathcal{D}})\times(\mathcal{E}_{\rm M'}({\rm M}')\cap\Pi_{\hat{\xi},\mathcal{D}})$. In the rest of the paragraph, we consider the projections $z:=\mathfrak{p}_{\hat{\xi}}\xi$, $z':=\mathfrak{p}_{\hat{\xi}}\xi'$, $z'':=\mathfrak{p}_{\hat{\xi}}\xi''$ as local coordinates of the points $\xi,\xi''\in\mathcal{E}({\rm M})\cap\Pi_{\hat{\xi},\mathcal{D}}$ and $\xi'\in\mathcal{E}_{\rm M'}({\rm M}')\cap\Pi_{\hat{\xi},\mathcal{D}}$. In these coordinates, the functions $\mathscr{E}_{\rm M'}$ and $\mathscr{E}_{\rm M}$ are of the form
\begin{equation*}
\begin{split}
\mathscr{E}^{loc}_{\rm M'}(z,z'):=\mathscr{E}_{\rm M'}(\Xi(z),\Xi'(z))=(\Xi'(z')-\Xi(z))\overline{(\Xi'(z')-\Xi(z))}, \\
\mathscr{E}^{loc}_{\rm M}(z,z''):=\mathscr{E}_{\rm M}(\Xi(z),\Xi(z''))=(\Xi(z'')-\Xi(z))\overline{(\Xi(z'')-\Xi(z))},
\end{split}
\end{equation*}
where the functions $\Xi$ and $\Xi'$ are provided by property (2') and obey estimate (\ref{closenes out of b}).  Due to this estimate, we have 
\begin{equation}
\label{min functions closeness}
\|\mathscr{E}^{loc}_{\rm M'}-\mathscr{E}^{loc}_{\rm M}\|_{C^l(\mathcal{D}\times\mathcal{D};\mathbb{R})}\le c_{\mathcal{E},1}t^{\frac{1}{3}}.
\end{equation}
Note that, for any $z''=z$ is a unique and non-degenerate point of the global minimum of $\mathscr{E}^{loc}_{\rm M}(z,\cdot)$. In what follows, we assume that the diameter of $\mathcal{D}$ is sufficiently small (hence, the surface piece $\mathcal{E}({\rm M})\cap\Pi_{\hat{\xi},\mathcal{D}}$ is approximately plane) that for any $z\in\mathcal{D}$, the function $\mathscr{E}^{loc}_{\rm M}(z,\cdot)$ has no other extremal points except described above.

To construct the map $\alpha_{\rm M, M'}$ on $\mathcal{E}({\rm M})\cap\Pi_{\hat{\xi},\mathcal{D}}$, we need the following lemma.
\begin{lemma}
\label{inverse function lemma}
Let $X_0,X,Y$ be domains with compact closures in a finite-dimen\-sional linear space $\mathbb{E}$, $\overline{X}\subset X_0$, and $E_0\in C^2(X_0\times Y;\mathbb{E})$. Suppose that the zero set of $E_0$ is the graph of the function $\kappa_0\in C^{1}(X_0;Y)$ and the Jacoby matrix $E'_{0,y}(x,\cdot)$ of the function $E_0(x,\cdot)$ obeys ${\rm det}E'_{0,y}(x,\kappa_0(x))\ne 0$ for any $x\in X_0$. Then, for sufficiently small $\varepsilon\in (0,\varepsilon_0)$ and any function $E\in C^1(X_0\times Y;\mathbb{E})$ such that $\|E-E_0\|_{C^1(X_0\times Y;\mathbb{E})}\le\varepsilon$, the zero set of $E$ on $X\times Y$ is the graph of a function $\kappa\in C^{1}(\overline{X};Y)$. Moreover, the estimate $\|\kappa-\kappa_0\|_{C^{1}(\overline{X};Y)}\le c\varepsilon$ holds with the constant independent of $E$ and $\varepsilon$.
\end{lemma}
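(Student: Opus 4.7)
The plan is to prove this quantitative implicit function theorem in four stages: localize via a contraction near the graph of $\kappa_0$, rule out stray zeros of $E$ away from that graph, assemble the local solutions into a global $C^1$ function $\kappa$, and finally differentiate to get the $C^1$-estimate.

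First I would set up the implicit function theorem machinery. Since $\overline{X}\subset X_0$ is compact and $\kappa_0\in C^1(X_0;Y)$ is continuous, the graph $\{(x,\kappa_0(x)):x\in\overline{X}\}$ is a compact subset of $X_0\times Y$. The matrix-valued map $x\mapsto E'_{0,y}(x,\kappa_0(x))$ is continuous and everywhere nonsingular, so by compactness there exist constants $c_1,c_2>0$ with $\|[E'_{0,y}(x,\kappa_0(x))]^{-1}\|\le c_1$ and, by uniform continuity of $E'_{0,y}$ (which is $C^1$, hence Lipschitz on compacts), a tubular neighborhood $T_\delta:=\{(x,y):x\in\overline{X},\,|y-\kappa_0(x)|<\delta\}$ on which $\|E'_{0,y}(x,y)-E'_{0,y}(x,\kappa_0(x))\|\le 1/(4c_1)$. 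For $E$ with $\|E-E_0\|_{C^1}\le\varepsilon$ sufficiently small, the map
\[
\Phi_x(y):=y-[E'_{0,y}(x,\kappa_0(x))]^{-1}E(x,y)
\]
is then a contraction with ratio $\le 1/2$ on the ball $B(\kappa_0(x),\delta)$, and since $|\Phi_x(\kappa_0(x))-\kappa_0(x)|=|[E'_{0,y}(x,\kappa_0(x))]^{-1}E(x,\kappa_0(x))|\le c_1\varepsilon$, the contraction maps $B(\kappa_0(x),2c_1\varepsilon)$ into itself once $\varepsilon$ is small. Banach's fixed-point theorem then produces a unique $\kappa(x)\in B(\kappa_0(x),2c_1\varepsilon)$ with $E(x,\kappa(x))=0$, giving the estimate $\|\kappa-\kappa_0\|_{C(\overline{X};Y)}\le 2c_1\varepsilon$.

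Next I would rule out zeros of $E$ on $X\times Y$ outside the tube $T_\delta$. Because the zero set of $E_0$ on the compact set $\overline{X}\times\overline{Y}$ is exactly the graph of $\kappa_0$ and this graph lies in the open set $T_\delta$, the continuous function $|E_0|$ attains a positive minimum $m>0$ on the complement $(\overline{X}\times\overline{Y})\setminus T_\delta$. For $\varepsilon<m$, the inequality $|E(x,y)|\ge |E_0(x,y)|-\varepsilon\ge m-\varepsilon>0$ holds there, so every zero of $E$ on $X\times Y$ lies in $T_\delta$, and within $T_\delta$ the contraction argument already gave uniqueness at each fixed $x$. Consequently the zero set of $E$ on $X\times Y$ is precisely the graph of the function $\kappa:\overline{X}\to Y$ constructed above. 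Continuity (and even Lipschitzness in $x$) of $\kappa$ follows from the parametric form of the contraction since $\Phi_x(y)$ depends continuously on $x$.

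Finally I would upgrade to the $C^1$ estimate. Once $\varepsilon$ is small the matrix $E'_y(x,\kappa(x))$ remains invertible (it differs from $E'_{0,y}(x,\kappa_0(x))$ by at most $O(\varepsilon+\delta)$), so the standard implicit function theorem promotes $\kappa$ to a $C^1$ map with
\[
\kappa'(x)=-[E'_y(x,\kappa(x))]^{-1}E'_x(x,\kappa(x)),\qquad \kappa_0'(x)=-[E'_{0,y}(x,\kappa_0(x))]^{-1}E'_{0,x}(x,\kappa_0(x)).
\]
Subtracting, bounding each factor by its $C^0$-perturbation (using $\|E-E_0\|_{C^1}\le\varepsilon$, the already-established $\|\kappa-\kappa_0\|_{C^0}\le c\varepsilon$, and the $C^2$ regularity of $E_0$ which gives Lipschitz dependence of $E'_{0,x},E'_{0,y}$ on their arguments) yields $\|\kappa'-\kappa_0'\|_{C(\overline{X})}\le c\varepsilon$, hence the claimed $C^1$ bound. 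The main technical obstacle I expect is making the two estimates—local contraction and global exclusion of stray zeros—fit together with a single small parameter $\varepsilon_0$; the $C^2$ hypothesis on $E_0$ (rather than merely $C^1$) is exactly what allows the Lipschitz control needed to get a linear-in-$\varepsilon$ bound on $\|\kappa'-\kappa_0'\|_{C^0}$ in the last step.
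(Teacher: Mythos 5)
Your proposal is correct and follows essentially the same route as the paper's intended proof: the paper explicitly remarks that Lemma~\ref{inverse function lemma} ``is proved by a slight modification of the proof of the implicit function theorem'' (deferring details to Lemma 4 and Appendix B of \cite{BKTeich}), and your contraction-mapping construction, tube-exclusion argument, and differentiation of the implicit-function formula are precisely that modification. The only point worth being a little more careful about is the exclusion step: the hypothesis controls the zero set of $E_0$ on $X_0\times Y$, so to run the compactness argument on $(\overline{X}\times\overline{Y})\setminus T_\delta$ one needs (and the paper's use of the $C^1$-norm on $X_0\times Y$ implicitly supposes) that $E_0$ extends continuously with a non-vanishing boundary restriction on $\overline{X}\times\partial Y$; in all of the paper's applications the unperturbed function stays uniformly away from zero near $\partial Y$, so this is harmless, but you should flag the assumption rather than silently invoke $\overline{Y}$.
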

In other words, Lemma \ref{inverse function lemma} states that the result of the well-known implicit function theorem is stable under small perturbations of the implicit function $\mathscr{E}_0$. It is proved by a slight modification of the proof of the implicit function theorem (for a detailed proof, see Lemma 4 and Appendix B, \cite{BKTeich}). We rewrite Lemma \ref{inverse function lemma} in the following form.
\begin{lemma}
\label{inverse function lemma 1}
Let $X_0,X,Y$ be domains with compact closures in a finite dimensional space $\mathbb{E}$, $\overline{X}\subset X_0$, and $\mathscr{E}_0\in C^3(X_0\times Y;\mathbb{R})$. Suppose that each extremum point of $\mathscr{E}_0(z,\cdot)$ with $z\in X_0$ is non-degenerate and the set of such points is the graph of the function $\kappa_0\in C^{1}(X_0;Y)$. Then, for sufficiently small $\varepsilon\in (0,\varepsilon_0)$ and any function $\mathscr{E}\in C^2(X_0\times Y;\mathbb{R})$ such that $\|\mathscr{E}-\mathscr{E}_0\|_{C^2(X_0\times Y;\mathbb{R})}\le\varepsilon$, the set of extremum points of $\mathscr{E}$ on $X\times Y$ is the graph of a function $\kappa\in C^{1}(\overline{X};Y)$. Moreover, the estimate $\|\kappa-\kappa_0\|_{C^{1}(\overline{X};Y)}\le c\varepsilon$ holds with the constant independent of $\mathscr{E}$ and $\varepsilon$.
\end{lemma}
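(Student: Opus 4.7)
The plan is to reduce Lemma \ref{inverse function lemma 1} to the already-established Lemma \ref{inverse function lemma} by passing to the gradient in the $y$-variable. Concretely, I set $E_0:=\nabla_y\mathscr{E}_0$ and $E:=\nabla_y\mathscr{E}$, viewed as maps into $\mathbb{E}$ (after identifying $\mathbb{E}$ with its dual via a fixed inner product). Since $\mathscr{E}_0\in C^3$ and $\mathscr{E}\in C^2$, one gets $E_0\in C^2(X_0\times Y;\mathbb{E})$, $E\in C^1(X_0\times Y;\mathbb{E})$, and the hypothesis $\|\mathscr{E}-\mathscr{E}_0\|_{C^2}\le\varepsilon$ immediately implies $\|E-E_0\|_{C^1(X_0\times Y;\mathbb{E})}\le\varepsilon$.

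Next I verify the structural assumptions of Lemma \ref{inverse function lemma} for $E_0$. By hypothesis every extremum point of $\mathscr{E}_0(z,\cdot)$ is non-degenerate, so in particular is a critical point, and conversely any critical point of $\mathscr{E}_0(z,\cdot)$ lying in the extremum set is a zero of $E_0(z,\cdot)$; thus the zero set of $E_0$ on $X_0\times Y$ coincides with the graph of $\kappa_0$. Moreover, the Jacobian matrix $E'_{0,y}(z,\cdot)$ is exactly the Hessian of $\mathscr{E}_0(z,\cdot)$, whose non-degeneracy at $(z,\kappa_0(z))$ is precisely the non-degeneracy of the extremum. Therefore Lemma \ref{inverse function lemma} applies: for sufficiently small $\varepsilon$, the zero set of $E$ on $X\times Y$ is the graph of a function $\kappa\in C^1(\overline{X};Y)$ with $\|\kappa-\kappa_0\|_{C^1(\overline{X};Y)}\le c\varepsilon$.

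Finally I must check that the critical points $(z,\kappa(z))$ of $\mathscr{E}(z,\cdot)$ produced this way are \emph{extrema} rather than saddles, so that they genuinely describe the extremum set of $\mathscr{E}$ on $X\times Y$. This follows from a continuity-of-signature argument: the Hessian of $\mathscr{E}(z,\cdot)$ at $\kappa(z)$ equals the Hessian of $\mathscr{E}_0(z,\cdot)$ at $\kappa_0(z)$ plus an $O(\varepsilon)$ perturbation (using both the $C^2$-closeness of $\mathscr{E}$ to $\mathscr{E}_0$ and the $C^1$-closeness of $\kappa$ to $\kappa_0$, together with the $C^3$-regularity of $\mathscr{E}_0$). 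Since $\overline{X}$ is compact and the unperturbed Hessian is definite (positive or negative) at each point of the graph with a uniform lower bound on $|\det|$, the perturbed Hessian remains definite of the same sign for small $\varepsilon$, and non-degenerate critical points of definite type are extrema. Conversely, outside a fixed neighborhood $U$ of the graph of $\kappa_0$ the function $|E_0|$ is bounded below by a positive constant on $\overline{X\times Y}\setminus U$, so $E$ has no zeros there for small $\varepsilon$, confirming that the extrema of $\mathscr{E}$ on $X\times Y$ are exhausted by the graph of $\kappa$.

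The only genuine subtlety — and hence the main obstacle — is this last step: passing from a perturbed implicit-function statement about zeros of a gradient to a statement about extrema. It is handled by the stability of definite Hessian signatures under small $C^2$-perturbations together with the compactness of $\overline{X}$; all remaining work is the routine bookkeeping that the $C^k$ seminorms of $\mathscr{E}_0$ control the constants in the application of Lemma \ref{inverse function lemma}.
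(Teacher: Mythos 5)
Your proof takes precisely the route the paper does: the paper's entire argument is the one-line reduction to Lemma~\ref{inverse function lemma} with $E_0=\nabla_y\mathscr{E}_0$ and $E=\nabla_y\mathscr{E}$, which is exactly what you carry out and verify hypothesis-by-hypothesis. Your final Hessian-signature argument --- that the perturbed critical points $(z,\kappa(z))$ remain non-degenerate extrema of the same type because the Hessians of $\mathscr{E}$ and $\mathscr{E}_0$ differ by $O(\varepsilon)$ on the compact set $\overline X$ --- is correct and fills a step the paper leaves implicit, namely upgrading ``critical set of $\mathscr{E}$'' (what Lemma~\ref{inverse function lemma} controls) to ``extremum set of $\mathscr{E}$'' (what the conclusion asserts); the only residual imprecision, shared with the paper's own statement, is that one must read the hypothesis ``the extrema form the graph of $\kappa_0$'' as ``the critical points form the graph of $\kappa_0$'' so that the zero set of $E_0$ is correctly identified, which is how the lemma is in fact applied in Sections~\ref{sec map alpha}.
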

The proof of Lemma \ref{inverse function lemma 1} is reduced to the application of Lemma \ref{inverse function lemma 1} for $E_0(x,y)=\nabla_y\mathscr{E}_0(x,y)$, $E(x,y)=\nabla_y\mathscr{E}(x,y)$. Let $\tilde{\mathcal{D}}$ be a domain in $\mathbb{C}$ such that $\overline{\tilde{\mathcal{D}}}\subset\mathcal{D}$ and $\mathfrak{p}_{\hat{\xi}}\xi\in\tilde{\mathcal{D}}$. Put $X_0=Y=\mathcal{D}$, $X=\tilde{\mathcal{D}}$, $\mathscr{E}_0=\mathscr{E}^{loc}_{\rm M}$, $\mathscr{E}=\mathscr{E}^{loc}_{\rm M'}$, and $\kappa_0(z)=I$, where $I$ is the identity on $\mathcal{D}$ in Lemma \ref{inverse function lemma 1}. Due to estimate (\ref{min functions closeness}), Lemma \ref{inverse function lemma 1} provides the map $\tilde{\alpha}_{\rm M, M'}: \ \tilde{\mathcal{D}}\to\mathbb{C}$ such that, for each $z\in\tilde{\mathcal{D}}$, the $\tilde{\alpha}_{\rm M, M'}(z)$ is a (unique) extremum point of $\mathscr{E}_{\rm M'}(z,\cdot)$ on $\mathcal{D}$. Also we obtain the estimate
\begin{equation}
\label{inverse function est}
\|\tilde{\alpha}_{\rm M, M'}-I\|_{C^{1}(\tilde{\mathcal{D}};\mathbb{C})}\le c_{\mathcal{E},1}t^{\frac{1}{3}}.
\end{equation}
In particular, for small $t$, the map $\tilde{\alpha}_{\rm M, M'}$ is a diffeomorphism from $\tilde{\mathcal{D}}$ onto its image in $\mathcal{D}$. Define the map $\alpha_{\rm M, M'}: \ \mathcal{E}({\rm M})\cap\Pi_{\hat{\xi},\tilde{\mathcal{D}}}\to \mathcal{E}_{\rm M'}({\rm M}')$ by
$$\alpha_{\rm M, M'}(\xi):=\Xi'\circ\tilde{\alpha}_{\rm M, M'}(\mathfrak{p}_{\hat{\xi}}\xi).$$
In view of (\ref{emb closeness}) and the definition of $\tilde{\alpha}_{\rm M, M'}$, the map $\alpha_{\rm M, M'}$ sends the point $\xi\in\mathcal{E}({\rm M})\cap\Pi_{\hat{\xi},\tilde{\mathcal{D}}}$ to the (unique) point of $\mathcal{E}_{\rm M'}({\rm M}')$ which is closest to $\xi$ in $\mathbb{C}^n$. Also, for small $t$, the map $\alpha_{\rm M, M'}$ is a diffeomorphism from $\mathcal{E}({\rm M})\cap\Pi_{\hat{\xi},\tilde{\mathcal{D}}}$ onto its image in $\mathcal{E}_{\rm M'}({\rm M}')$. By decreasing the diameter of $\mathcal{D}$, one can obtain that the facts above are valid with $\tilde{\mathcal{D}}$ replaced by $\mathcal{D}$. Then, from (\ref{closeness near boundary fin 3}) and (\ref{inverse function est}), we obtain 
\begin{equation}
\label{point closeness 1}
|\alpha_{\rm M, M'}(\xi)-\xi|\le c_{\mathcal{E},1}t^{\frac{1}{3}}, \qquad \xi\in\mathcal{E}({\rm M})\cap\Pi_{\hat{\xi},\mathcal{D}}.
\end{equation}

Let $\phi: \ [0,1) \to \mathcal{E}({\rm M})$ be a curve with the beginning at $\xi\in\mathcal{E}({\rm M})\cap\Pi_{\hat{\xi},\mathcal{D}}$ and the tangent vector $a$ at $\xi$. Denote $z:=\mathfrak{p}_{\hat{\xi}}\xi$, $z':=\mathfrak{p}_{\hat{\xi}}\phi'(0)$ and $\tilde{a}=\frac{d\mathfrak{p}_{\hat{\xi}}\phi}{dt}(0)$, $\tilde{a}':=\frac{d\mathfrak{p}_{\hat{\xi}}\phi'}{dt}(0)=\tilde{\alpha}_{\rm M, M'}(z)\tilde{a}$. Introduce the curves $\phi':=\alpha_{\rm M, M'}\circ\phi$ in $\mathcal{E}_{\rm M'}({\rm M}')$ and denote it tangent vector at $\xi'=\phi'(0)=\alpha_{\rm M, M'}(\xi)$ by $a'$; then $a':=(\alpha_{\rm M, M'})_* a$. Since the components $\Xi_k=\mathfrak{p}_{\hat{\xi}}\Xi=w_{k}\circ w_{\hat{\xi}}^{-1}$ and $\Xi'_k=\mathfrak{p}_{\hat{\xi}}\Xi'=w'_{k}\circ w_{\hat{\xi}}^{'-1}$ of $\Xi$ and $\Xi'$ are holomorphic on $\mathcal{D}$, the lengths (in the metrics induced by $\mathbb{C}^n$ on $\mathcal{E}({\rm M})$ and $\mathcal{E}_{\rm M'}({\rm M}')$) of the tangent vectors $a$, $a'$ are given by
\begin{align*}
|a|_{T_\xi\mathcal{E}({\rm M})}^2&=\sum_{k=1}^{n}|\partial_z\Xi_k(z)|^2|\tilde{a}|^2, \\
|a'|_{T_{\xi'}\mathcal{E}_{\rm M'}({\rm M}')}^{2}&=\sum_{k=1}^{n}|\partial_z\Xi'_k(z')|^2|\tilde{a}'|^2=\\
&=\sum_{k=1}^{n}|(\partial_z\Xi'_k)\circ\tilde{\alpha}_{\rm M, M'}(z)\cdot\partial_z\tilde{\alpha}_{\rm M, M'}(z)|^2|\tilde{a}|^2,
\end{align*}
respectively. Hence, estimates (\ref{inverse function est}) and (\ref{closenes out of b}) (see condition (3'')) yield
\begin{equation}
\label{near isometric 1}
\Big|\frac{|(\alpha_{\rm M, M'})_* a|_{T_{\alpha_{\rm M, M'}(\xi)}\mathcal{E}_{\rm M'}({\rm M}')}}{|a|_{T_\xi\mathcal{E}({\rm M})}}-1\Big|\le c_{\mathcal{E},1}t^{\frac{1}{3}}, \quad \forall a\in T_\xi\mathcal{E}({\rm M}), \ \xi\in\mathcal{E}({\rm M})\cap\Pi_{\hat{\xi},\mathcal{D}}
\end{equation}
for sufficiently small $t\in(0,t_0)$ and any $(M',g')\in\mathbb{B}(M,t)$. So, the map $\alpha_{\rm M, M'}: \ \mathcal{E}({\rm M})\cap\Pi_{\hat{\xi},\mathcal{D}}\to\mathcal{E}_{\rm M'}({\rm M}')$ is near-isometric for small $t$ and any $(M',g')\in\mathbb{B}(M,t)$.

So, we have proved that any $\xi_0\in\mathcal{E}({\rm M})$ such that $r(\xi)>r_0$ (where $r_0>0$ is an arbitrarily fixed number) is contained in some cylinders $\Pi_{\hat{\xi},\mathcal{D}}$, $\Pi_{\hat{\xi}_k,\mathcal{D}_k}$ such that, for sufficiently small $t\in(0,t(\xi_0))$ and any $(M',g')\in\mathbb{B}(M,t)$, the following properties are valid are valid along with (1')-(3'):
\begin{enumerate}[(1')]
\setcounter{enumi}{3}
\item $\overline{\Pi_{\hat{\xi},\tilde{\mathcal{D}}}}\subset\Pi_{\hat{\xi}_k,\mathcal{D}_k}$ and $\Pi_{\hat{\xi}_k,\mathcal{D}_k}$ does not intersect the near-boundary strip $r(\xi)<r_0$ in $\mathcal{E}({\rm M})$.
\item there is the diffeomorphism $\alpha^{(k)}_{\rm M, M'}$ from $\mathcal{E}({\rm M})\cap\Pi_{\hat{\xi}_k,\mathcal{D}_k}$ onto its image in $\mathcal{E}_{\rm M'}({\rm M}')$ obeying (\ref{point closeness 1}), (\ref{near isometric 1}).
\item for $\xi\in \mathcal{E}({\rm M})\cap\Pi_{\hat{\xi}_k,\tilde{\mathcal{D}}_k}$, the $\alpha^{(k)}_{\rm M, M'}(\xi)$ is the unique extremum point of the function $\mathscr{E}_{\rm M'}(\xi,\cdot)$ on $\mathcal{E}_{\rm M'}({\rm M}')\cap\Pi_{\hat{\xi}_k,\mathcal{D}_k}$.
\end{enumerate}
Indeed, due to the result of Subsection \ref{1st type est}, any $\xi_0\in\mathcal{E}({\rm M})$ such that $r(\xi)>r_0$ is contained in some cylinder $\Pi_{\hat{\xi}_k,\mathcal{D}_k}$. Then, as explained above, by decreasing the diameter of the base $\mathcal{D}_k$ of $\Pi_{\hat{\xi}_k,\mathcal{D}_k}$, one can obey (5'). In addition to this, one satisfies the inequality $r(\xi)>r_0$ on $\mathcal{E}({\rm M})\cap \Pi_{\hat{\xi}_k,\mathcal{D}_k}$. Now, choosing any sub-cylinder $\Pi_{\hat{\xi},\tilde{\mathcal{D}}}$ obeying (4') and repeating the reasoning above, one obtain (6').

\subsection{Constructing $\alpha_{\rm M, M'}$ in a zone near $\mathcal{E}(\Upsilon)$.} 
\label{ssec alpha near boundary}
Suppose that the point $\xi_0\in\mathcal{E}(\Upsilon)$ and the cylinders $\Pi_{\hat{\xi},\mathcal{D}}$, $\Pi_{\hat{\xi},\tilde{\mathcal{D}}}$, the set $\varpi=\Upsilon_{\mathcal{D}}\times [0,\rho_0)$, and the map $\Xi$ are such that $\xi_0\in\Pi_{\hat{\xi},\tilde{\mathcal{D}}}$ and properties (1'')-(3'') are valid. In what follows, we consider $(s(\xi),\rho(\xi)):=\Xi^{-1}(\xi)$ and $(s'(\xi'),\rho'(\xi)):=\Xi^{'-1}(\xi')$ (where $\Xi'$ is provided by (2'')) as local coordinates of points $\xi\in\mathcal{E}({\rm M})\cap\Pi_{\hat{\xi},\mathcal{D}}$ and $\xi'\in\Xi'(\varphi)\supset\mathcal{E}_{\rm M'}({\rm M}')\cap\Pi_{\hat{\xi},\tilde{\mathcal{D}}}$, respectively, where $\Xi'$ is the map provided by property (2''). Foe simplicity, we also identify the points $s,s'$ of the segment $\Upsilon_{\mathcal{D}}$ with their natural parameters. With this agreement, one can consider the set $\varpi$ as a sub-domain of the closed upper half-plane in $\mathbb{R}^2$.

Let $\phi^0=(\phi^0_s,\phi^0_\rho): \ [0,1)\to\varpi$ be a curve with the beginning at $(s_0,\rho_0)$. Introduce the curves $\phi:=\Xi\circ\phi^0$ and $\phi':=\Xi'\circ\phi^0$ in $\mathcal{E}({\rm M})$ and $\mathcal{E}_{\rm M'}({\rm M}')$, respectively. Let $a$ be the tangent vector to the curve $\phi$ at the point $\xi_0:=\phi(0)=\Xi(s_0,\rho_0)$ and $a'$ be the tangent vector to $\phi'$ at $\xi'_0:=\phi'(0)=\Xi'(s_0,\rho_0)$. In the metrics induced by $\mathbb{C}^n$ on $\mathcal{E}({\rm M})$ and $\mathcal{E}_{\rm M'}({\rm M}')$, the lengths of $a$ and $a'$ are given by
\begin{equation}
\label{metrics near boundary 1}
\begin{split}
|a|_{T_\xi\mathcal{E}({\rm M})}^2&=\sum_{k=1}^{n}\big|\partial_s\Xi_k(\phi^0(\sigma))\frac{d\phi^0_s(\sigma)}{d\sigma}+\partial_\rho\Xi_k(\phi^0(\sigma))\frac{d\phi^0_\rho(\sigma)}{d\sigma}\big|^2\Big|_{\sigma=0}, \\
|a'|_{T_{\xi'}\mathcal{E}_{\rm M'}({\rm M}')}^{2}&=\sum_{k=1}^{n}\big|\partial_s\Xi'_k(\phi^0(\sigma))\frac{d\phi^0_s(\sigma)}{d\sigma}+\partial_\rho\Xi'_k(\phi^0(\sigma))\frac{d\phi^0_\rho(\sigma)}{d\sigma}\big|^2\Big|_{\sigma=0},
\end{split}
\end{equation}
respectively. Let $\tilde{g}$ be the matrix of the metric tensor on $\mathcal{E}({\rm M})$ in local coordinates $(s,\rho)$ and $\tilde{g}'$ be the matrix of the metric tensor on $\mathcal{E}_{\rm M'}({\rm M}')$ in local coordinates $(s',\rho')$. Then formula (\ref{metrics near boundary 1}) and the estimates (\ref{closeness near boundary fin 3}) provided by (2'') imply
\begin{equation}
\label{metrics near boundary 1 est}
\|\tilde{g}'-\tilde{g}\|_{C^{l}(\varpi;M^{2\times 2})}\le c_{\mathcal{E},l+4}t^{\frac{1}{3}}.
\end{equation}
Decreasing the diameter of the base $\mathcal{D}$ of $\Pi_{\hat{\xi},\mathcal{D}}$ if needed, one can assume that the semi-geodesic coordinates $(\varsigma,r)$ are regular on $\mathcal{E}({\rm M})\cap\Pi_{\hat{\xi},\mathcal{D}}$. By halving the diameter of $\mathcal{D}$, one can obtain that the semi-geodesic coordinates $(\varsigma',r')$ are regular on $\mathcal{E}_{\rm M'}({\rm M}')\cap\Pi_{\hat{\xi},\mathcal{D}}$ for sufficiently small $t\in(0,t_0)$ and any $(M',g')\in\mathbb{B}(M,t)$. This fact is a corollary of (\ref{metrics near boundary 1 est}) and the following lemma.
\begin{lemma}
\label{geodesic stability lemma}
Let $\mathfrak{g}$ be a smooth metric tensor defined on the upper half-plane $\mathbb{R}^2_+$ of $\mathbb{R}^2$ and $\mathfrak{x}:=(\mu,\varrho)$ be semi-geodesic coordinates on $\mathbb{R}^2_+$ corresponding to $\mathfrak{g}$ which are regular in the rectangle ${\rm R}_0:=[-2q_0,2q_0]\times[0,2q_0]$, where $q_0>0$. Then there exist $\epsilon_0>0$, such that, for any metrics $\mathfrak{g}'$ on $\mathbb{R}^2_+$ obeying $\|\mathfrak{g}'-\mathfrak{g}\|_{C^{3}({\rm R}_0;M^{2\times 2})}\le \epsilon$, the semi-geodesic coordinates $\mathfrak{x}':=(\mu',\varrho')$ on $\mathbb{R}^2_+$ corresponding to $\mathfrak{g}'$ are regular on the rectangle ${\rm R}:=[-q_0,q_0]\times[0,q_0]$ and obey $\|\mathfrak{x}'-\mathfrak{x}\|_{C^2({\rm R};\mathbb{R}^2)}\le c\epsilon$, where $c$ does not depend on $\mathfrak{g}'$.
\end{lemma}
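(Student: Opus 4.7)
The plan is to realize the semi-geodesic coordinate map $\mathfrak{x}$ as the local inverse of an explicit map $\mathcal{Y}_\mathfrak{g}$ built from the geodesic flow of $\mathfrak{g}$, and then combine standard smooth dependence of ODE solutions on parameters with Lemma~\ref{inverse function lemma} to transfer the stability of the metric to the stability of the coordinate chart.

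First I would construct $\mathcal{Y}_\mathfrak{g}:=\mathfrak{x}^{-1}$ explicitly. Let $\mu\mapsto\phi_\mathfrak{g}(\mu)$ be the $\mathfrak{g}$-arclength reparameterization of the segment $\{\varrho=0\}\cap\{|\mu|\le 3q_0\}$ starting at the origin, obtained by solving $\partial_\mu\phi_\mathfrak{g}=1/\sqrt{\mathfrak{g}_{11}(\phi_\mathfrak{g},0)}$; let $\mathfrak{n}_\mathfrak{g}(\mu)$ denote the inward unit $\mathfrak{g}$-normal to $\{\varrho=0\}$ at $(\phi_\mathfrak{g}(\mu),0)$, an algebraic expression in $\mathfrak{g}|_{\{\varrho=0\}}$; and let $\gamma_\mathfrak{g}(\cdot;p,v)$ denote the $\mathfrak{g}$-geodesic with initial data $(p,v)$. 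Set
\[
\mathcal{Y}_\mathfrak{g}(\mu,\varrho):=\gamma_\mathfrak{g}\bigl(\varrho;(\phi_\mathfrak{g}(\mu),0),\mathfrak{n}_\mathfrak{g}(\mu)\bigr),
\]
so that $\mathfrak{x}=\mathcal{Y}_\mathfrak{g}^{-1}$ on $\mathcal{Y}_\mathfrak{g}$'s image; regularity of $\mathfrak{x}$ on ${\rm R}_0$ is equivalent to $\mathcal{Y}_\mathfrak{g}$ being a diffeomorphism of $\mathfrak{x}({\rm R}_0)$ onto ${\rm R}_0$ with $|\det D\mathcal{Y}_\mathfrak{g}|\ge c_0>0$. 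Since Christoffel symbols are first-order differential expressions in $\mathfrak{g}$, the hypothesis $\|\mathfrak{g}'-\mathfrak{g}\|_{C^3({\rm R}_0)}\le\epsilon$ produces a $C^2$-bound of order $\epsilon$ on $\Gamma_{\mathfrak{g}'}-\Gamma_\mathfrak{g}$; analogously, $\phi_{\mathfrak{g}'}-\phi_\mathfrak{g}$ and $\mathfrak{n}_{\mathfrak{g}'}-\mathfrak{n}_\mathfrak{g}$ are $C^3$-bounded of the same order. Standard smooth dependence of solutions of the geodesic ODE on its coefficients and initial data, obtained from the variational equations up to order two combined with Gronwall on the compact set $\mathfrak{x}({\rm R}_0)$, then yields $\|\mathcal{Y}_{\mathfrak{g}'}-\mathcal{Y}_\mathfrak{g}\|_{C^2(\mathfrak{x}({\rm R}_0);\mathbb{R}^2)}\le c\epsilon$.

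To invert, I would apply Lemma~\ref{inverse function lemma} to $E_\mathfrak{g}(\mathfrak{y},\mathfrak{x}):=\mathcal{Y}_\mathfrak{g}(\mathfrak{x})-\mathfrak{y}$ with $X_0$ a neighbourhood of $\mathfrak{x}({\rm R}_0)$, $X\Subset X_0$ a neighbourhood of $\mathfrak{x}({\rm R})$, and $Y$ a neighbourhood of ${\rm R}$ contained in $\mathcal{Y}_\mathfrak{g}(\overline{X})$; non-degeneracy of $\partial_\mathfrak{x} E_\mathfrak{g}=D\mathcal{Y}_\mathfrak{g}$ is secured by the previous step. The lemma produces $\mathfrak{x}'\in C^1(\overline{Y};\mathbb{R}^2)$ with $\|\mathfrak{x}'-\mathfrak{x}\|_{C^1}\le c\epsilon$. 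To upgrade to the $C^2$ bound required by the conclusion, I would differentiate the identity $\mathcal{Y}_{\mathfrak{g}'}\circ\mathfrak{x}'=\mathrm{id}$ twice and solve algebraically for $D^2\mathfrak{x}'$: uniform invertibility of $D\mathcal{Y}_{\mathfrak{g}'}$ (inherited from $D\mathcal{Y}_\mathfrak{g}$ via the $C^1$-closeness just obtained) combined with the $C^2$ bound on $\mathcal{Y}_{\mathfrak{g}'}-\mathcal{Y}_\mathfrak{g}$ gives $\|D^2\mathfrak{x}'-D^2\mathfrak{x}\|_{C^0({\rm R})}\le c\epsilon$, completing the proof. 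The main obstacle is precisely this bootstrap from the $C^1$-conclusion of Lemma~\ref{inverse function lemma} to the $C^2$-conclusion needed here, together with keeping all constants uniform in $\mathfrak{g}'$; both are routine once one exploits the compactness of ${\rm R}_0$ and the strict inclusion $\overline{\rm R}\subset\mathrm{int}\,{\rm R}_0$ to isolate $\mathfrak{x}'({\rm R})$ from $\partial\mathfrak{x}({\rm R}_0)$ for all sufficiently small $\epsilon$.
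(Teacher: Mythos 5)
Your argument is correct and is essentially the route the paper takes: the statement reduces to stability of the Cauchy problem for the geodesic ODE (Gronwall applied to the variational equations up to order two, uniformly on the compact rectangle, using that the Christoffel symbols lose one derivative from $\mathfrak{g}$), followed by inversion of the geodesic flow map via Lemma~\ref{inverse function lemma} and a bootstrap from the identity $\mathcal{Y}_{\mathfrak{g}'}\circ\mathfrak{x}'=\mathrm{id}$ to upgrade the $C^1$ conclusion to $C^2$. There is only a minor notational slip in the application of Lemma~\ref{inverse function lemma}: the roles of $X_0$ and $Y$ are interchanged---the independent variable $\mathfrak{y}$ should range over a spatial neighbourhood of $\mathrm{R}_0\subset\mathbb{R}^2_+$ while the dependent variable $\mathfrak{x}$ ranges over a neighbourhood of $\mathfrak{x}(\mathrm{R}_0)$ in coordinate space, so that the lemma delivers $\kappa=\mathfrak{x}'\in C^1(\overline{X};Y)$ with $\overline{X}\supset\mathrm{R}$, as required.
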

In other words, Lemma \ref{geodesic stability lemma} states that solutions to the Cauchy problem for the geodesic equation are stable under small perturbations of the metric. It is proved by slight modifications of arguments used in the proof of local solvability of the Cauchy problem (for a detailed proof, see Lemma 6 and Appendix C, \cite{BKTeich}). 

For $(s,\rho)\in\varpi$, denote 
$$\mathfrak{x}(s,\rho):=(\varsigma(\Xi(s,\rho)),r(\Xi(s,\rho))), \qquad \mathfrak{x}'(s,\rho):=(\varsigma'(\Xi'(s,\rho)),r'(\Xi'(s,\rho))).$$ 
Then (\ref{metrics near boundary 1 est}) and Lemma \ref{geodesic stability lemma} lead to the estimates 
\begin{equation}
\label{closenes of geodesic coordinates}
\begin{split}
\|\mathfrak{x}'-\mathfrak{x}\|_{C^2(\varpi;\mathbb{R}^2)}\le c_{\mathcal{E},7}t^{\frac{1}{3}},\\
\|\mathfrak{x}^{'-1}\circ\mathfrak{x}-I\|_{C^2(\varpi;\mathbb{R}^2)}\le c_{\mathcal{E},7}t^{\frac{1}{3}},
\end{split}
\end{equation}
where $I$ is the identity on $\varpi$. Let $\varpi_{0}$ be the set of all $(s,\rho)\in\varpi$ such that $r(\Xi(s,\rho))<r_0/3$. Since $\chi(r(\Xi(s,\rho)))=1$ for $(s,\rho)\in\varpi_0$, formulas (\ref{minimizing function}), (\ref{minimizing function near b}) mean that the global minimum of the function $\mathscr{E}^{loc}_{\rm M'}(\xi,\cdot)$, $\xi=\Xi(s,\rho)$ is attained at the (unique) point $\alpha_{\rm M, M'}(\xi)=\Xi'(s',\rho')$ such that $\mathfrak{x}'(s',\rho')=\mathfrak{x}(s,\rho)$. By this, the map $\alpha_{\rm M, M'}$ is defined on $\Xi(\varpi_0)$. In addition, the map $\tilde{\alpha}_{\rm M, M'}:=\Xi^{-1}\circ\alpha_{\rm M, M'}\circ\Xi=\mathfrak{x}^{'-1}\circ\mathfrak{x}$ is well-defined on $\varpi_0$ and obeys
\begin{equation}
\label{map alpha closenes in loc c 1}
\|\tilde{\alpha}_{\rm M, M'}-I\|_{C^2(\varpi_0;\mathbb{R}^2)}\le c_{\mathcal{E},7}t^{\frac{1}{3}}
\end{equation}
due to the second estimate of (\ref{closenes of geodesic coordinates}). 

Now, let $\varpi_{1}$ be the set of all $(s,\rho)\in\varpi$ such that $r(\Xi(s,\rho))>r_0/5$. Similarly, let $\varpi_{2}$ be the set of all $(s,\rho)\in\varpi$ such that $r(\Xi(s,\rho))>r_0/4$. For $(s,\rho)\in\varpi$  and $(s',\rho')\in\varpi$, put
\begin{align*}
\mathscr{E}^{loc}_{\rm M'}(s,\rho,s',\rho'):=\mathscr{E}_{\rm M'}(\Xi(s,\rho);\Xi'(s',\rho')),\\
\mathscr{E}^{loc}_{\rm M}(s,\rho,s',\rho'):=\mathscr{E}_{\rm M}(\Xi(s,\rho);\Xi(s',\rho')).
\end{align*}
Then estimates (\ref{closeness near boundary fin 3}) and (\ref{closenes of geodesic coordinates}) yield
\begin{equation}
\label{closeness of min func near b}
\|\mathscr{E}^{loc}_{\rm M'}-\mathscr{E}^{loc}_{\rm M}\|_{C^2(\varpi;\mathbb{R})}\le c_{\mathcal{E},7}t^{\frac{1}{3}}. 
\end{equation}
Note that $(s',\rho')=(s,\rho)$ is a point of global minimum of $\mathscr{E}^{loc}_{\rm M}(s,\rho,\cdot,\cdot)$. By decreasing the diameter of the base $\mathcal{D}$ of $\Pi_{\hat{\xi},\mathcal{D}}$, we obtain that, for any $(s,\rho)\in\varpi_{1}$, the function  $\mathscr{E}^{loc}_{\rm M}(s,\rho,\cdot,\cdot)$ has no other extremal points except described above. Due to this, the set of the extremal points of $\mathscr{E}^{loc}_{\rm M}(s,\rho,\cdot,\cdot)$ with $(s,\rho)\in\varpi_{1}$ coincides with the graph of $I: \ (s,\rho)\to(c,\rho)$. Due to (\ref{closeness of min func near b}), the application of Lemma \ref{inverse function lemma 1} (with $X_0=\varpi_{1}$, $X=\varpi_{2}$, $Y=\varpi$, $\mathscr{E}_0=\mathscr{E}_{\rm M}$, $\mathscr{E}_0=\mathscr{E}^{loc}_{\rm M}$, $\mathscr{E}=\mathscr{E}^{loc}_{\rm M'}$, and $\kappa_0(z)=I$) provides the injective map $\tilde{\alpha}_{\rm M, M'}: \ \varpi_{2}\to\varpi$ which sends $(s,\rho)\in\varpi_{2}$ to the (unique) extremum point of $\mathscr{E}^{loc}_{\rm M'}(s,\rho,\cdot,\cdot)$. Also, the estimate
\begin{equation}
\label{map alpha closenes in loc c 2}
\|\tilde{\alpha}_{\rm M, M'}-I\|_{C^1(\varpi_2;\mathbb{R}^2)}\le c_{\mathcal{E},7}t^{\frac{1}{3}}.
\end{equation}
is valid. Note that, for $(s,\rho)\in\varpi_{0}\cap\varpi_{2}$, the (unique) extremum point of $\mathscr{E}^{loc}_{\rm M'}(s,\rho,\cdot,\cdot)$ is $(s',\rho')=(s,\rho)$. Due to this, the map $\tilde{\alpha}_{\rm M, M'}$ the definition of $\tilde{\alpha}_{\rm M, M'}$ is consistent with the one introduced before (\ref{map alpha closenes in loc c 1}). Thus, the map $\tilde{\alpha}_{\rm M, M'}$ is well-defined on $\varpi$. Also, from (\ref{map alpha closenes in loc c 1}) and (\ref{map alpha closenes in loc c 2}), it follows that
\begin{equation}
\label{map alpha closenes in loc c}
\|\tilde{\alpha}_{\rm M, M'}-I\|_{C^1(\varpi;\mathbb{R}^2)}\le c_{\mathcal{E},7}t^{\frac{1}{3}}.
\end{equation}
By construction, the map
$$\alpha_{\rm M, M'}:=\Xi'\circ\tilde{\alpha}_{\rm M, M'}\circ\Xi^{-1}$$ 
is a diffeomorphism from $\mathcal{E}({\rm M})\cap\Pi_{\hat{\xi},\mathcal{D}}$ onto its image in $\mathcal{E}_{\rm M'}({\rm M'})$ which sends the points close to the boundary $\mathcal{E}(\Upsilon)$ to the points of $\mathcal{E}_{\rm M'}({\rm M'})$ with the same values of semi-geodesic coordinates. Also, estimates (\ref{closeness near boundary fin 3}), (\ref{metrics near boundary 1 est}), and (\ref{map alpha closenes in loc c}) imply
\begin{equation}
\label{points closeness 2}
|\alpha_{\rm M, M'}(\xi)-\xi|\le c_{\mathcal{E},7}t^{\frac{1}{3}}, \quad \forall \xi\in\mathcal{E}({\rm M})\cap\Pi_{\hat{\xi},\mathcal{D}}
\end{equation}
and
\begin{equation}
\label{near isometric 2}
\Big|\frac{|(\alpha_{\rm M, M'})_* a|_{T_{\alpha_{\rm M, M'}(\xi)}\mathcal{E}_{\rm M'}({\rm M}')}}{|a|_{T_\xi\mathcal{E}({\rm M})}}-1\Big|\le c_{\mathcal{E},7}t^{\frac{1}{3}}, \quad \forall a\in T_\xi\mathcal{E}({\rm M}), \ \xi\in\mathcal{E}({\rm M})\cap\Pi_{\hat{\xi},\mathcal{D}}
\end{equation}
for sufficiently small $t\in(0,t_0)$ and any $(M',g')\in\mathbb{B}(M,t)$. So, the map $\alpha_{\rm M, M'}: \ \mathcal{E}({\rm M})\cap\Pi_{\hat{\xi},\mathcal{D}}\to\mathcal{E}_{\rm M'}({\rm M}')$ is near-isometric for small $t$ and any $(M',g')\in\mathbb{B}(M,t)$. 

So, we have proved that any $\xi_0\in\mathcal{E}(\Upsilon)$ is contained in some cylinders $\Pi_{\hat{\xi},\tilde{\mathcal{D}}}$, $\Pi_{\hat{\xi},\mathcal{D}}$ such that, for arbitrarily fixed $r_0>0$ in (\ref{minimizing function}), sufficiently small $t\in(0,t(\xi_0))$, and any $(M',g')\in\mathbb{B}(M,t)$, the following properties are valid are valid along with (1'')-(3''):
\begin{enumerate}[(1'')]
\setcounter{enumi}{3}
\item $\overline{\Pi_{\hat{\xi},\tilde{\mathcal{D}}}}\subset\Pi_{\hat{\xi}_k,\mathcal{D}_k}$ and the cylinder $\Pi_{\hat{\xi},\tilde{\mathcal{D}}}$ contains the part $\{\xi\in\mathcal{E}({\rm M}) \ | \ r(\xi)\le r_1, \ \varsigma(\xi)\in\Upsilon_1\}$ of the surface $\mathcal{E}({\rm M})$, where $r_1>0$ and $\Upsilon_1$ is a segment of $\Upsilon$ containing $\xi_0$.
\item there is the diffeomorphism $\alpha_{\rm M, M'}$ from $\mathcal{E}({\rm M})\cap\Pi_{\hat{\xi}_k,\mathcal{D}}$ onto its image in $\mathcal{E}_{\rm M'}({\rm M}')$ obeying (\ref{points closeness 2}) and (\ref{near isometric 2}).
\item for $\xi\in\Pi_{\hat{\xi},\tilde{\mathcal{D}}}$, the $\alpha_{\rm M, M'}(\xi)$ is the unique extremum point of the function $\mathscr{E}_{\rm M'}(\xi,\cdot)$ on $\mathcal{E}_{\rm M'}({\rm M}')\cap\Pi_{\hat{\xi},\mathcal{D}}$. In particular, if $ r(\xi)\le r_0/3$, then the semi-geodeisic coordinates of $\alpha_{\rm M, M'}(\xi)$ and $\xi$ on $\mathcal{E}_{\rm M'}({\rm M}')$ and $\mathcal{E}({\rm M})$, respectively, do coincide.
\end{enumerate}
Indeed, any $\xi_0\in\mathcal{E}(\Upsilon)$ is contained in some cylinders $\Pi_{\hat{\xi},\tilde{\mathcal{D}}}$, $\Pi_{\hat{\xi},\mathcal{D}}$ obeying (1'')-(3''), while properties (5''), (6'') can be satisfied by decreasing of the diameters of $\Pi_{\hat{\xi},\tilde{\mathcal{D}}}$, $\Pi_{\hat{\xi},\mathcal{D}}$. Also, property (4'') is valid since the base $\tilde{\mathcal{D}}$ of $\Pi_{\hat{\xi},\tilde{\mathcal{D}}}$ has non-zero diameter.

\subsection{Global construction of $\alpha_{\rm M, M'}$ and completing the proof of Theorem \ref{main theorem}.} 
Let $\xi_0\in\mathcal{E}(\Upsilon)$. As showed in Subsection \ref{ssec alpha near boundary}, there are the cylinders $\Pi_{\hat{\xi},\mathcal{D}}=\Pi_{\hat{\xi}(\xi_0),\mathcal{D}(\xi_0)}$, $\Pi_{\hat{\xi},\tilde{\mathcal{D}}}=\Pi_{\hat{\xi}(\xi_0),\tilde{\mathcal{D}}(\xi_0)}$ which contain $\xi_0$ and satisfy (1'')-(6''). The cylinders $\{\Pi_{\hat{\xi}(\xi_0),\tilde{\mathcal{D}}(\xi_0)}\}_{\xi_0\in\mathcal{E}(\Upsilon)}$ constitute an open cover of $\mathcal{E}(\Upsilon)$. Since $\mathcal{E}(\Upsilon)$ is a compact subset of $\mathbb{C}^n$, one can chose a finite sub-cover $\{\Pi_{\hat{\xi}(\xi_0(k)),\tilde{\mathcal{D}}(\xi_0(k))}\}_{k=1}^{K_1}$ of $\mathcal{E}(\Upsilon)$. Due to property (4''), there is $r_0>0$ such that the near-boundary strip $r\le 2r_0$ of $\mathcal{E}({\rm M})$ is contained in the union $\bigcup_{k=1}^{K_1}\Pi_{\hat{\xi}(\xi_0(k)),\tilde{\mathcal{D}}(\xi_0(k))}$. In what follows, we choose such $r_0$ in definition (\ref{minimizing function}) of $\mathscr{E}_{\rm M'}$.

Now, let $\xi_0\in \mathcal{E}({\rm M})$ be such that $r(\xi_0)\ge r_0$. As showed in Subsection \ref{ssec alpha separated from boundary}, there are the cylinders $\Pi_{\hat{\xi},\mathcal{D}}=\Pi_{\hat{\xi}(\xi_0),\mathcal{D}(\xi_0)}$, $\Pi_{\hat{\xi},\tilde{\mathcal{D}}}=\Pi_{\hat{\xi}(\xi_0),\tilde{\mathcal{D}}(\xi_0)}$ which contain $\xi_0$ and obey (1')-(6'). The cylinders $\{\Pi_{\hat{\xi}(\xi_0),\tilde{\mathcal{D}}(\xi_0)}\}_{\xi_0\in\mathcal{E}({\rm M}), \ r(\xi_0)> r_0}$ constitute an open cover of the compact subset $\mathcal{E}({\rm M})_-:=\{\xi_0\in \mathcal{E}({\rm M}) \ | \  r(\xi_0)\ge r_0\}$ of $\mathbb{C}^n$. Therefore, there is a finite sub-cover $\{\Pi_{\hat{\xi}(\xi_0(k)),\tilde{\mathcal{D}}(\xi_0(k))}\}_{k=K_1+1}^{K_2}$ of $\mathcal{E}({\rm M})_-$. Then $\{\Pi_{\hat{\xi}(\xi_0(k)),\tilde{\mathcal{D}}(\xi_0(k))}\}_{k=1}^{K_2}$ is a finite cover of $\mathcal{E}({\rm M})$. Since the number of cylinders of the cover is finite, one can assume that there is $\epsilon_0>0$ such that each cylinder $\Pi_{\hat{\xi}(\xi_0(k)),\mathcal{D}(\xi_0(k))}$ contain $\epsilon_0-$neighbourhood of $\Pi_{\hat{\xi}(\xi_0(k)),\tilde{\mathcal{D}}(\xi_0(k))}$. Also, we assume that that constant $c$ in estimates (\ref{points closeness 2}), (\ref{near isometric 1}), (\ref{point closeness 1}) and (\ref{near isometric 2}) provided by (5'), (5'') do not depend on the number $k$ of cylinder. Similarly, we assume that, for each $k=1,\dots K_2$, properties (1')-(6') and (1'')-(6'') are valid for all $t\in (0,t_0)$ while $c_{\mathcal{E},7}t_0^{\frac{1}{3}}\le \epsilon_0/3$. 

For each cylinder $\Pi_{\hat{\xi}(\xi_0(k)),\mathcal{D}(\xi_0(k))}$, denote the map $\alpha_{\rm M, M'}$ provided by property (5'), (6') or (5''), (6'') by $\alpha^{(k)}_{\rm M, M'}(\xi)$. Let $\xi\in\mathcal{E}({\rm M})$ belongs to the intersection of the cylinders $\Pi_{\hat{\xi}(\xi_0(k)),\tilde{\mathcal{D}}(\xi_0(k))}$ and $\Pi_{\hat{\xi}(\xi_0(j)),\tilde{\mathcal{D}}(\xi_0(j))}$. In view of estimates (\ref{points closeness 2}), (\ref{point closeness 1}) provided by (5'), (5'') and the inequality $c_{\mathcal{E},7}t_0^{\frac{1}{3}}\le \epsilon_0/3$, the points $\alpha^{(k)}_{\rm M, M'}(\xi)$ and $\alpha^{(j)}_{\rm M, M'}(\xi)$ belong to the intersection of the cylinders $\Pi_{\hat{\xi}(\xi_0(k)),\mathcal{D}(\xi_0(k))}$ and $\Pi_{\hat{\xi}(\xi_0(j)),\mathcal{D}(\xi_0(j))}$. Then $\alpha^{(k)}_{\rm M, M'}(\xi)=\alpha^{(k')}_{\rm M, M'}(\xi)$ in view of (6') and (6''). So, the map $\alpha_{\rm M, M'}$ introduced by the rule 
$$\alpha_{\rm M, M'}|_{\mathcal{E}({\rm M})\cap\Pi_{\hat{\xi}_k,\mathcal{D}_k}}=\alpha^{(k)}_{\rm M, M'}$$
is well-defined on the whole $\mathcal{E}({\rm M})$ and it is a local diffeomorphism due to (5') and (5'').

Now, let $\xi_1\in\mathcal{E}({\rm M})\cap\Pi_{\hat{\xi}(\xi_0(k)),\tilde{\mathcal{D}}(\xi_0(k))}$, $\xi_2\in\mathcal{E}({\rm M})$, and $\alpha_{\rm M, M'}(\xi_1)=\alpha_{\rm M, M'}(\xi_2)=\xi'$. In view of estimates (\ref{points closeness 2}), (\ref{point closeness 1}) provided by (5'), (5''), we have $|\xi_1-\xi_2|\le |\xi_1-\xi'|+|\xi_2-\xi'|\le 2c_{\mathcal{E},7}t_0^{\frac{1}{3}}<\epsilon_0$. Thus, $\xi_1,\xi_2\in\Pi_{\hat{\xi}(\xi_0(k)),\mathcal{D}(\xi_0(k))}$ and $\xi'=\alpha^{(k)}_{\rm M, M'}(\xi_1)=\alpha^{(k)}_{\rm M, M'}(\xi)(\xi_2)$. Since $\alpha^{(k)}_{\rm M, M'}: \ $ is an injection, we have $\xi_1=\xi_2$. Therefore, the map $\alpha_{\rm M, M'}: \ \mathcal{E}({\rm M})\to \mathcal{E}_{\rm M'}({\rm M}')$ is an injection.

Since the map $\alpha_{\rm M, M'}$ is a local diffeomorphism, its differential non-degenerate of $\mathcal{E}({\rm M})$. Then its image $\alpha_{\rm M, M'}(\mathcal{E}({\rm M}))$ is open in $\mathcal{E}_{\rm M'}({\rm M}')$. On the contrary, since $\alpha_{\rm M, M'}$ is continuous and the surface $\mathcal{E}({\rm M})$ is compact, the image $\alpha_{\rm M, M'}(\mathcal{E}({\rm M}))$ is a compact subset of $\mathcal{E}_{\rm M'}({\rm M}')$ and, thus, it is closed in $\mathcal{E}_{\rm M'}({\rm M}')$. Since $\mathcal{E}_{\rm M'}({\rm M}')$ is connected, we obtain $\alpha_{\rm M, M'}(\mathcal{E}({\rm M}))=\mathcal{E}_{\rm M'}({\rm M}')$. Therefore, the map $\alpha_{\rm M, M'}: \ \mathcal{E}({\rm M})\to \mathcal{E}_{\rm M'}({\rm M}')$ is a surjection.

So, we have proved that $\alpha_{\rm M, M'}$ from $\mathcal{E}({\rm M})$ onto $\mathcal{E}_{\rm M'}({\rm M}')$. In view of properties (6'), (6''), and the symmetry (\ref{minimizing function symmetry}) of the function $\mathscr{E}_{\rm M'}$, the map $\alpha_{\rm M, M'}$ obeys symmetry condition (\ref{alpha symmetry}). Also, in view of property (6''), we have
$$(\varsigma'(\alpha_{\rm M, M'}(\xi)),r'(\alpha_{\rm M, M'}(\xi)))=(\varsigma(\xi),r(\xi)), \qquad \xi\in\mathcal{E}({\rm M}), \ r(\xi)\le r_0/3.$$
In particular, 
\begin{equation}
\label{boungary to boundary 1}
\alpha_{\rm M, M'}\circ\mathcal{E}(x)=\mathcal{E}_{\rm M'}(x), \qquad x\in\Upsilon.
\end{equation}
Finally, in view of estimates (\ref{near isometric 1}) and (\ref{near isometric 2}) provided by (5') and (5''), we have 
\begin{equation}
\label{near isometric map}
\Big|\frac{|(\alpha_{\rm M, M'})_* a|_{T_{\alpha_{\rm M, M'}(\xi)}\mathcal{E}_{\rm M'}({\rm M}')}}{|a|_{T_\xi\mathcal{E}({\rm M})}}-1\Big|\le c_{\mathcal{E},7}t^{\frac{1}{3}}, \quad \forall a\in T_\xi\mathcal{E}({\rm M}), \ \xi\in\mathcal{E}({\rm M}).
\end{equation}
Thus, the map $\alpha_{\rm M, M'}$ is near-isometric for small $t>0$ and any $(M',g')\in\mathbb{B}(M,t)$.

Now, introduce the diffeomorphism $\beta_{\rm M, M'}: \ {\rm M}\to {\rm M}'$ by (\ref{beta}). In view of symmetry condition (\ref{alpha symmetry}), the map $\beta_{\rm M, M'}$ is involution-preserving i.e. obeys (\ref{beta symmetry}). Formula (\ref{boungary to boundary 1}) means that $\beta_{\rm M, M'}$ does not move the points of the boundary $\Upsilon$ i.e. $\beta_{\rm M, M'}(x)=x$ for any $x\in\Upsilon$. Next, let $x$ be an arbitrary point of $M$, $b_1,b_2\in T_x M$ be arbitrary tangent vectors at $x$, and $|b_1|_{T_x {\rm M}}=|b_2|_{T_x {\rm M}}=1$. Denote $a_j:=\mathcal{E}_*b_j$, $a'_j:=(\alpha_{\rm M, M'})_* a_j$, and $b'_j:=(\mathcal{E}_{\rm M'}^{-1})_* a'_j$. Since the maps $\mathcal{E}: \ {\rm M}\to\mathcal{E}({\rm M})$ and $\mathcal{E}_{\rm M'}: \ {\rm M}'\to\mathcal{E}_{\rm M'}({\rm M}')$ are conformal, we have 
$$\frac{|a_1|_{T_\xi\mathcal{E}({\rm M})}}{|b_1|_{T_x {\rm M}}}=\frac{|a_2|_{T_\xi\mathcal{E}({\rm M})}}{|b_2|_{T_x {\rm M}}}, \qquad \frac{|a'_1|_{T_{\xi'}\mathcal{E}_{\rm M'}({\rm M}')}}{|b'_1|_{T_{\beta_{\rm M, M'}(x)}{\rm M}'}}=\frac{|a'_2|_{T_{\xi'}\mathcal{E}_{\rm M'}({\rm M}')}}{|b'_2|_{T_{\beta_{\rm M, M'}(x)}{\rm M}'}}.$$
Hence,
$$\Big|\frac{|b'_1|_{T_{\beta_{\rm M, M'}(x)}{\rm M}'}}{|b'_2|_{T_{\beta_{\rm M, M'}(x)}{\rm M}'}}-1\Big|=\Big|\frac{|a'_1|_{T_{\alpha_{\rm M, M'}(\xi)}\mathcal{E}_{\rm M'}({\rm M}')}}{|a_1|_{T_\xi\mathcal{E}({\rm M})}}\cdot\frac{|a_2|_{T_\xi\mathcal{E}({\rm M})}}{|a'_2|_{T_{\alpha_{\rm M, M'}(\xi)}\mathcal{E}_{\rm M'}({\rm M}')}}-1\Big|\le c_{\mathcal{E},7}t^{\frac{1}{3}}$$
due to estimate (\ref{near isometric map}). The last inequality means that the dilatation $K_{\beta_{\rm M, M'}}(x)$ of the map $\beta_{\rm M, M'}$ satisfies
\begin{equation}
\label{dilatation est}
{\rm log}K_{\beta_{\rm M, M'}}(x)\le c_{\mathcal{E},7}t^{\frac{1}{3}}, \qquad \forall x\in {\rm M}.
\end{equation}
Thus, the diffeomorphism $\beta_{\rm M, M'}$ is near-conformal for small $t>0$ and any $(M',g')\in\mathbb{B}(M,t)$.

Finally, introduce the diffeomorphism $\beta_{M,M'}: \ M\to M'$ in the following way. For $y\in M$, its pre-image $\pi^{-1}(\{x\})$ in ${\rm M}$ consists of two points $x$ and $\tau(x)$. Due to (\ref{beta symmetry}) the points $\beta_{\rm M, M'}(x)$ and $\beta_{\rm M, M'}(\tau(x))$ of ${\rm M'}$ are connected by $\beta_{\rm M, M'}(\tau(x))=\tau'(\beta_{\rm M, M'}(x))$ and, thus, have the joint projection $\beta_{M,M'}(y):=\pi'(\beta_{\rm M, M'}(x))=\pi'(\beta_{\rm M, M'}(\tau(x)))$ on $M'$. By this, the map $\beta_{M,M'}: \ M\to M'$ is defined. Since $\pi: \ {\rm M}\to M$ and $\pi': \ {\rm M}'\to M'$ are local diffeomorphisms, the map $\beta_{M,M'}$ is also a local diffeomorophism admitting the representation
\begin{equation}
\label{beta local}
\beta_{M,M'}=\pi'\circ\beta_{\rm M, M'}\circ\pi^{-1}.
\end{equation}
If $\beta_{M,M'}(y_1)=\beta_{M,M'}(y_2)=y'$, then 
$$\pi^{'-1}(\{y'\})=\beta_{\rm M, M'}(\pi^{-1}(\{y_1\}))=\beta_{\rm M, M'}(\pi^{-1}(\{y_2\})),$$ 
and, since $\beta_{\rm M, M'}$ is a bijection, $\pi^{-1}(\{y_1\})=\pi^{-1}(\{y_2\})$ and, hence, $y_1=y_2$. Therefore, $\beta_{M,M'}$ is an injection. Since $\pi^{-1}(M)={\rm M}$, $\beta_{M,M'}({\rm M})={\rm M}'$, and $\pi'({\rm M}')=M'$, the map $\beta_{M,M'}$ is a surjection. Thus, $\beta_{M,M'}: \ M\to M'$ is a diffeomorphism. Also, if $y\in\Gamma$, then its pre-image $\pi^{-1}(\{y\})\subset\Upsilon\equiv\Gamma\times\{\pm\}$ and consists of two points $(y,+)$ and $(y,-)$. Since $\beta_{\rm M, M'}$ does not move the points of $\Upsilon$, we have $\beta_{\rm M, M'}((y,\pm))=(y,\pm)$ and $\beta_{M,M'}(y)=\pi'(\beta_{\rm M, M'}((y,\pm))=y$. Therefore, $\beta_{M,M'}$ does not move the points of $\Gamma$. In addition, since $\pi: \ ({\rm M},{\rm g})\to (M,g)$ and $\pi': \ ({\rm M}',{\rm g}')\to (M',g')$ are local isometries, from (\ref{dilatation est}) and (\ref{beta local}) it follows that the dilatation $K_{\beta_{M, M'}}$ of $\beta_{M, M'}$ satisfies
$$\frac{1}{2}{\rm log}K_{\beta_{M, M'}}(x)\le c_{\mathcal{E},7}t^{\frac{1}{3}}, \qquad \forall x\in M,$$
which is valid for small $t>0$ and any $(M',g')\in\mathbb{B}(M,t)$. By this, we have proved estimate (\ref{stability estimate}). Hence according to definition (\ref{Tdist}) of the Teichm\"uller distance, we have 
$$d_T([(M,g)],[(M',g')])\le \frac{1}{2}{\rm log}K_{\beta_{M, M'}} \le c_{\mathcal{E},7}t^{\frac{1}{3}}, \quad \forall (M',g')\in\mathbb{B}(M,t).$$
In particular, the map $\mathscr{R}^{-}_{m,\Gamma}: \ (\mathscr{D}^{-}_{m,\Gamma},d_{op})\to (\mathscr{T}^{-}_{m,\Gamma},d_T)$ is continuous at the point $\Lambda$ (DN map of $(M,g)$). Since $(M,g)\in\mathscr{T}^{-}_{m,\Gamma}$ is arbitrary, Theorem \ref{main theorem} is proved.

\subsection{Stability estimates.} In both orientable and non-orientable cases, the arguments above yield not only the continuity of the solving map $\mathscr{R}^{o}_{m,\Gamma}$ but also the following continuity estimates.
\begin{prop}
\label{continuity estmates}
For each $\Lambda\in\mathscr{D}^{o}_{m,\Gamma}$, the map $\mathscr{R}^{o}_{m,\Gamma}: \ (\mathscr{D}^{o}_{m,\Gamma},d_{op})\to (\mathscr{T}^{o}_{m,\Gamma},d_T)$ satisfies 
\begin{align}
\label{stability estimate orient}
c(\Lambda)d_{op}(\Lambda',\Lambda)\le d_T(\mathscr{R}^{o}_{m,\Gamma}(\Lambda'),\mathscr{R}^{o}_{m,\Gamma}(\Lambda))\le C(\Lambda)d_{op}(\Lambda',\Lambda), \qquad o=+,\\
\label{stability estimate nonorient}
c(\Lambda)d_{op}(\Lambda',\Lambda)\le d_T(\mathscr{R}^{o}_{m,\Gamma}(\Lambda'),\mathscr{R}^{o}_{m,\Gamma}(\Lambda))\le C(\Lambda)d_{op}(\Lambda',\Lambda)^{\frac{1}{3}}, \qquad o=-.
\end{align}
Here $\Lambda'\in\mathscr{D}^{o}_{m,\Gamma}$ obeys $d_{op}(\Lambda',\Lambda)\le \tilde{c}(\Lambda)$; the positive constants $c(\Lambda)$, $C(\Lambda)$, $\tilde{c}(\Lambda)$ depend only on $\Lambda$.
\end{prop}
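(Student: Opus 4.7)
\textbf{Proof plan for Proposition \ref{continuity estmates}.} The upper bound in (\ref{stability estimate nonorient}) is precisely (\ref{stability estimate}), obtained at the end of Section \ref{sec map alpha}, with $C(\Lambda):=c_{\mathcal{E},7}$. The upper bound in (\ref{stability estimate orient}) is the stability result of \cite{BKTeich}: it is obtained by re-running the three-step scheme of Sections \ref{sec map iota}--\ref{sec map alpha} directly on the orientable surfaces $(M,g)$ and $(M',g')$ (no covering spaces are needed), the only essential change being that in the orientable case boundary traces of holomorphic functions satisfy the \emph{linear} Cauchy--Riemann system $\partial_\gamma h=\Lambda f$, $\partial_\gamma f=-\Lambda h$ (see the remark following Proposition \ref{trace-DN-map connection}). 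Hence the analog of $\mathfrak{Y}_{\mathcal{G},\mathcal{G}'}$ is a linear projection onto the null set of a linear operator, the cubic lower bound (\ref{epsilon est 1}) is replaced by a linear one, and the exponent $\tfrac{1}{3}$ in (\ref{epsilon est})--(\ref{dilatation est}) becomes $1$ throughout. This yields $\log K_{\beta_{M,M'}}\le C(\Lambda)\,t$.

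For the lower bounds in both (\ref{stability estimate orient}) and (\ref{stability estimate nonorient}), I would argue variationally. By definition (\ref{Tdist}), for any $\varepsilon>0$ there is a diffeomorphism $\beta\colon M\to M'$ fixing $\Gamma$ pointwise with $\tfrac{1}{2}\log K_\beta\le d_T([M,g],[M',g'])+\varepsilon$. Since $\beta$ is an isometry $(M,\beta^{*}g')\to(M',g')$ that is the identity on $\Gamma$, the DN map $\Lambda'$ equals the DN map of the metric $\beta^{*}g'$ on $M$. In two dimensions the Dirichlet energy
\begin{equation*}
E_g(u)=\int_M g^{ij}\partial_i u\,\partial_j u\,\sqrt{\det g}\,dx
\end{equation*}
is conformally invariant, and pointwise diagonalization of $\beta^{*}g'$ in a $g$-orthonormal frame---together with the definition of $K_\beta(x)$ as the squared axis ratio---yields the uniform quadratic-form comparison $\bigl|E_{\beta^{*}g'}(u)-E_g(u)\bigr|\le (K_\beta^{1/2}-1)\,E_g(u)\le c\log K_\beta\cdot E_g(u)$ for all $u\in H^1(M)$. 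The Dirichlet principle $(\Lambda f,f)_{L^2(\Gamma)}=\min\{E_g(u):u|_\Gamma=f\}$ then gives $|((\Lambda-\Lambda')f,f)_{L^2(\Gamma)}|\le c\log K_\beta\cdot(\Lambda f,f)_{L^2(\Gamma)}$; polarization, the standard trace bound $(\Lambda f,f)_{L^2(\Gamma)}\le c_\Lambda\|f\|_{H^{1/2}(\Gamma)}^{2}$, and the pseudo-differential norm equivalence recalled in the proof of Lemma \ref{preliminary estimates lemma} convert this into $d_{op}(\Lambda',\Lambda)\le C'(\Lambda)\log K_\beta$. Letting $\varepsilon\to 0$ produces the lower bounds with $c(\Lambda):=(2C'(\Lambda))^{-1}$.

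The main obstacle I anticipate is uniformity: the constant $C'(\Lambda)$ in the conversion to the $H^1\to L^2$ operator norm must depend only on $\Lambda$, not on the auxiliary $\beta$ or on $(M',g')$. This is arranged by shrinking the threshold $\tilde c(\Lambda)$ so that $\|\Lambda'\|_{H^1(\Gamma)\to L^2(\Gamma)}\le \|\Lambda\|_{H^1(\Gamma)\to L^2(\Gamma)}+\tilde c(\Lambda)$ for all admissible $\Lambda'$, reducing the estimate to a uniform bound over the family of DN maps in that ball; the equivalence of pseudo-differential norms of first order then supplies constants depending only on this a priori bound. A secondary but standard technical point is the possible non-attainment of the infimum in (\ref{Tdist}), which is handled by the $\varepsilon$-argument above.
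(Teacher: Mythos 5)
Your proposal is correct and matches the paper's proof in essence: the upper bounds are obtained exactly as you describe (quoting (\ref{stability estimate}) for $o=-$, and the linear-rate analogue from \cite{BKTeich} for $o=+$), and the lower bound is obtained by the same energy-comparison argument, with only cosmetic differences (the paper applies the divergence theorem to get the bilinear identity $((\Lambda'-\Lambda)f,f)=(\nabla_{g'}u',\nabla_{g'}u)_{\vec L_2(M,g')}-(\nabla_g u',\nabla_g u)_{\vec L_2(M,g)}$ and then estimates in isothermal coordinates, arriving at $|((\Lambda'-\Lambda)f,f)|\le c\,d_T\,((\Lambda'+\Lambda)f,f)$ before absorbing the $\Lambda'-\Lambda$ term; your Dirichlet-principle route produces the equivalent bound directly). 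Your $\varepsilon$-argument handling possible non-attainment of the infimum in (\ref{Tdist}) is a small clean-up over the paper, which tacitly assumes an extremal $\beta$ exists; and the passage from the quadratic-form bound to the $H^1\to L^2$ operator norm that you flag as a potential obstacle is also treated tersely in the paper, so your concern is legitimate but does not distinguish the two proofs.
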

\begin{proof}
In the non-orientable case, the continuity of the solving map $\mathscr{R}^{+}_{m,\Gamma}: \ (\mathscr{D}^{+}_{m,\Gamma},d_{op})\to (\mathscr{T}^{+}_{m,\Gamma},d_T)$ and the second part of estimate (\ref{stability estimate nonorient}) are proved above. In the orientable case, the similar but more simpler proof of the continuity of the solving map $\mathscr{R}^{+}_{m,\Gamma}: \ (\mathscr{D}^{+}_{m,\Gamma},d_{op})\to (\mathscr{T}^{+}_{m,\Gamma},d_T)$ is obtained in \cite{BKTeich}. In this case, the arguments of Sections \ref{sec embeddings} and \ref{sec map alpha} can be implemented for projective holomorphic embeddings $\mathcal{E}: \ M\to \mathbb{C}^n$ and $\mathcal{E}_{M'}: \ M'\to \mathbb{C}^n$ of the surfaces $(M,g)$ and $(M',g')$ themselves. At the same time, the construction of the operator $\iota_{M,M'}$ which maps holomorphic functions on $(M,g)$ to holomorphic functions on $(M',g')$ with close traces on $\Gamma$ for small $t=d(\Lambda',\Lambda)$ requires arguments much more simpler than those used in Section \ref{sec map iota}. The reason is that the traces $\eta$ of holomorphic functions on $(M,g)$ (and, similarly, on $(M',g')$) are found from the linear equations $\mathfrak{D}\Re\eta=0$, $\Im\eta=J\Lambda\Re\eta+{\rm const}$ provided by Proposition \ref{trace-DN-map connection}, {\it a}). In addition, in the orientable case, the following estimate 
$$\sup_{(M',g')\in \mathbb{B}(M,t)}\|{\rm Tr}'\iota_{M,M'}w-{\rm Tr}w\|_{C^l(\Gamma;\mathbb{C})}\le c_{[(M,g)],l}\|{\rm Tr}w\|_{C^{l+1}(\Gamma;\mathbb{C})}d_{op}(\Lambda',\Lambda)$$
is valid (see Lemma 2, \cite{BKTeich}). By using this estimate instead of (\ref{application of main lemma - est 2}) in the arguments of Sections \ref{sec embeddings} and \ref{sec map alpha}, one constructs, for any surface $(M',g')\in \mathscr{R}^{+}_{m,\Gamma}$ such that its DN map $\Lambda'$ is sufficiently close to the DN map $\Lambda$ of $(M,g)$, the map $\beta_{M,M'}: \ M\to M'$ whose dilatation is subject to the estimate
$${\rm log}K_{\beta_{M,M'}}\le c_{[(M,g)]}t$$
(cf. with (\ref{stability estimate})). From the last inequality and the definition (\ref{Tdist}) of the Teichm\"uller distance, one obtains the second part of estimate (\ref{stability estimate orient}).

Now, let $(M,g)$ and $(M',g')$ be surfaces with DN maps $\Lambda$ and $\Lambda'$, respectively, and $[(M,g)]\in\mathscr{T}^{o}_{m,\Gamma}$, $[(M',g')]\in\mathscr{T}^{o}_{m,\Gamma}$. Also, let $\beta: \ M\to M'$ be a diffeomorphism minimizing the dilatation in (\ref{Tdist}) i.e. such that $\frac{1}{2}{\rm log}K_\beta=d_T([(M',g')],[(M,g)])$ and $\beta$ does not move the points of $\Gamma$. Since $[(M',g')]=[(M,\beta^*g')]$, one can assume that $M'=M$ and $\beta$ is the identity on $M$. Let $u$ and $u'$ be smooth harmonic functions on $(M,g)$ and $(M,g')$, respectively, and $u|_\Gamma=u'|)\Gamma=f$. In view of the divergence theorem, one has
\begin{equation}
\label{optimality 1}
((\Lambda'-\Lambda)f,f)_{L_2(\Gamma)}=(\nabla_{g'}u',\nabla_{g'}u)_{\vec{L}_2(M,g')}-(\nabla_{g}u',\nabla_{g}u)_{\vec{L}_2(M,g)}.
\end{equation}
Recall that one can chose an atlas $\{U_k,\phi_k\}_{k=1}^{N}$ on $M$ such that $\phi_k: \ U_k\to\mathbb{R}^2$ are isothermal coordinates on $U$ in which the components of the tensor $g$ are of the form $g_{ij}=\delta_{ij}$. Let $\{\chi_k\}_{k=1}^{M}$ be a partition of unity subordinate to the atlas $\{U_k,\phi_k\}_{k=1}^{N}$. Then the right-hand side of (\ref{optimality 1}) is equal to $\sum_{k=1}^M\mathscr{J}_k$, where 
$$\mathscr{J}_k:=\int\limits_{\phi_k(U_k)}\chi_k\circ\phi_{k}^{-1}[g^{'ij}\sqrt{|g'|}-g^{ij}\sqrt{|g|}]\partial_i(u'\circ\phi_k^{-1})\partial_j(u\circ\phi_k^{-1})dx$$
Here $g_{ij}=\delta_{ij}$ and $g'_{ij}$ are components of the metrics $g$ and $g'$ in local coordinates $\phi_k$, $\{g^{ij}\}_{i,j=1}^2$ and $\{g^{'ij}\}_{i,j=1}^2$ are matrices inverse to $\{g_{ij}\}_{i,j=1}^2$ and $\{g_{'ij}\}_{i,j=1}^2$, and $|g|$ and $|g'|$ are determinants of $\{g_{ij}\}_{i,j=1}^2$ and $\{g_{'ij}\}_{i,j=1}^2$, respectively. According to the definition of the dilatation $K_{\beta}(y)$ at the point $y=\phi_k^{-1}(x)\in U$, we have $K_{\beta}(y)^2=\lambda_{\rm max}(g'(x))/\lambda_{\rm min}(g'(x))$, where $\lambda_{\rm max}(g')$ (resp., $\lambda_{\rm min}(g')$) denotes the maximal (resp., minimal) eigenvalue of the matrix $\{g_{'ij}(x)\}_{i,j=1}^2$. Introduce the matrices $I:=\{g^{ij}\sqrt{|g|}\}_{i,j=1}^2=\{\delta_{ij}\}_{i,j=1}^2$ and $I'(x):=\{g^{'ij}(x)\sqrt{|g'|(x)}\}_{i,j=1}^2$. Then 
$$K_{\beta}(y)^2=\lambda_{\rm max}(I'(x))/\lambda_{\rm min}(I'(x)), \quad \lambda_{\rm max}(I'(x))\lambda_{\rm min}(I'(x))={\rm det\,}I'(x)=1.$$
Hence, $\lambda_{\rm max}(I'(x))=K_{\beta}(y)$ and $\lambda_{\rm min}(I'(x))=1/K_{\beta}(y)$. If the value of $d_T([(M',g')],[(M,g)])=\frac{1}{2}{\rm log}K_\beta$ is sufficiently small, we have 
$$|I'(x)-I|_{M^{2\times 2}}\le c|\lambda_{\rm max}(I'(x))-1|+|\lambda_{\rm min}(I'(x))-1|\le cd_T([(M',g')],[(M,g)])$$
for any $x\in\phi_k(U_k)$. Therefore, 
\begin{align*}
|\mathscr{J}_k|\le cd_T([(M',g')],[(M,g)])\Big[\int\limits_{\phi_k(U_k)}\chi_k\circ\phi_{k}^{-1}\sum_{i=1}^{2}|\partial_i(u'\circ\phi_k^{-1})|^2 dx+\\
+\int\limits_{\phi_k(U_k)}\chi_k\circ\phi_{k}^{-1}\sum_{i=1}^{2}|\partial_i(u\circ\phi_k^{-1})|^2 dx\Big].
\end{align*}
Note that 
\begin{align*}
\int\limits_{\phi_k(U_k)}\chi_k\circ\phi_{k}^{-1}\sum_{i=1}^{2}|\partial_i(u\circ\phi_k^{-1})|^2 dx=(\nabla_{\chi_k g}u,\nabla_{\chi_k g}u)_{\vec{L}_2(U,\chi_k g)},\\
[\inf\limits_{x\in\phi_k(U_k)}\lambda_{\rm min}(I'(x))]\int\limits_{\phi_k(U_k)}\chi_k\circ\phi_{k}^{-1}\sum_{i=1}^{2}|\partial_i(u'\circ\phi_k^{-1})|^2 dx\le \\ \le \int\limits_{\phi_k(U_k)}\chi_k\circ\phi_{k}^{-1}\sum_{i=1}^{2}g^{'ij}\partial_i(u'\circ\phi_k^{-1})\partial_j(u'\circ\phi_k^{-1})\sqrt{|g'|}dx=\\
=(\nabla_{\chi_k g'}u',\nabla_{\chi_k g'}u')_{\vec{L}_2(U,\chi_k g')}.
\end{align*}
Combining the last two formulas, one arrives at
\begin{align*}
\sum_{k=1}^{N}|\mathscr{J}_k|\le cd_T([(M',g')],[(M,g)])\big[\|\nabla_{g}u\|^2_{\vec{L}_2(U,g)}+\|\nabla_{g'}u'\|^2_{\vec{L}_2(U,g')}\big].
\end{align*}
Applying the divergence theorem again, one obtains
\begin{align*}
\sum_{k=1}^{N}|\mathscr{J}_k|\le cd_T([(M',g')],[(M,g)])((\Lambda'+\Lambda)f,f)_{L_2(\Gamma)}.
\end{align*}
Combining the last estimate with (\ref{optimality 1}) yields
\begin{equation}
\label{optimality 2}
|((\Lambda'-\Lambda)f,f)_{L_2(\Gamma)}|\le cd_T([(M',g')],[(M,g)])((\Lambda'+\Lambda)f,f)_{L_2(\Gamma)}.
\end{equation}
In (\ref{optimality 2}), let us rewrite $\Lambda'+\Lambda=2\Lambda+(\Lambda'-\Lambda)$ in the right-hand side and take supremum over all smooth $f$ obeying $\|f\|_{H^1(\Gamma)}=1$. As a result, we arrive at
$$d_{op}(\Lambda',\Lambda)=\|\Lambda'-\Lambda\|_{H^1(\Gamma)\to L_2(\Gamma)}\le cd_T([(M',g')],[(M,g)]).$$
By this, we have proved the first parts of inequalities (\ref{stability estimate orient}) and (\ref{stability estimate nonorient}).
\end{proof}

\subsection{Remarks on stability of surface topology} 
Let $\Lambda$ be DN map $\Lambda$ of non-orientable surface $(M,g)$, and $m=\chi(M)$. Denote a ball of positive radius $t$ with center $\Lambda$ in the space of bounded operators acting from $H^{1}(\Gamma)$ to $L_2(\Gamma)$ by $B(\Lambda,t)$. Note that, in the construction of the diffeomorphism $\beta_{M,M'}$ above, we actually do not use the assumption that the surface $(M',g')$ is non-orientable. Thus, there is a ball $B(\Lambda,t_0)$ which does not contain DN maps of orientable surfaces with Euler characteristic $m$. Below, we provide a more straightforward proof of this fact along with results on the stability of the surface topology under small perturbation of its DN map.
\begin{prop}
\
\begin{enumerate}[{\rm t}1.]
\item Let $\Lambda$ be a DN map of an orientable surface $(M,g)$ with boundary $\Gamma$, and $m=\chi(M)$. Then there is a ball $B(\Lambda,t_0)$ which does not contain DN maps of orientable surfaces whose Euler characteristics are more than $m$. At the same time, any ball $B(\Lambda,t)$ contains DN maps of orientable as well as non-orientable surfaces with arbitrary Euler characteristics less than $m$.
\item Let $\Lambda$ be a DN map of a non-orientable surface $(M,g)$ with boundary $\Gamma$, and $m=\chi(M)$. Then, for any given $m'$, there is a ball $B(\Lambda,t_0)$ which does not contain DN maps of orientable surfaces whose Euler characteristics are more than $m'$. Also, there is the ball $B(\Lambda,t_1)$ which does not contain DN maps of non-orientable surfaces whose Euler characteristics are more than $m$. At the same time, any ball $B(\Lambda,t)$ contains DN maps of non-orientable surfaces with arbitrary Euler characteristics less than $m$.
\end{enumerate}
\end{prop}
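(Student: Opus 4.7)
My plan is to derive every ``does not contain'' clause in both parts of the proposition from the codimension-stability argument already used in the proof of Lemma~\ref{main lemma}, and to invoke \cite{ZNS} for the ``contains'' clauses.

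The common principle: if $\Phi:C^{\infty}(\Gamma;\mathbb{R})\to C^{\infty}(\Gamma;\mathbb{R})$ is a continuous, positively homogeneous map and $h_1,\dots,h_k$ are linearly independent modulo $\Phi^{-1}(\{0\})$, then for every sufficiently small perturbation $\Phi'$ of $\Phi$ these functions remain linearly independent modulo $\Phi'^{-1}(\{0\})$; in particular ${\rm codim}\,\Phi'^{-1}(\{0\})\ge k$. For linear $\Phi$ this is just openness of finite-dimensional linear independence; in the nonlinear setting it is exactly the content of (\ref{main lemma ineq 2}) in the proof of Lemma~\ref{main lemma}. In both cases Lemma~\ref{preliminary estimates lemma}b supplies the requisite closeness of $\Phi'$ to $\Phi$ as $t=\|\Lambda'-\Lambda\|\to 0$.

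I apply the principle three times. For the orientable exclusion in t1, take $\Phi=\mathfrak{D}=\partial_\gamma+\Lambda J\Lambda$; by Proposition~\ref{trace-DN-map connection}a together with standard Hodge theory, ${\rm codim}\,\ker\mathfrak{D}'=1-\chi(M')$ for every orientable $M'$. Choosing $h_1,\dots,h_{1-m}$ with $\mathfrak{D}h_j$ linearly independent yields ${\rm codim}\,\ker\mathfrak{D}'\ge 1-m$ for small $t$, hence $\chi(M')\le m$, contradicting $\chi(M')>m$. For the orientable exclusion in t2, the non-orientability of $M$ forces $\ker\mathfrak{D}=\{{\rm const}\}$ by Proposition~\ref{trace-DN-map connection}a, so $\mathfrak{D}$ is injective on $C^{\infty}(\Gamma;\mathbb{R})/(\mathrm{const})$; given any $m'$, pick $1-m'$ smooth functions linearly independent modulo constants and run the same argument to obtain $\chi(M')\le m'$. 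For the non-orientable exclusion in t2, replace $\mathfrak{D}$ by $\mathcal{G}$ of (\ref{non-linear map}), whose null set has codimension $-\chi(M')$ by Proposition~\ref{trace-DN-map connection 1}c, and invoke the nonlinear version of the principle to conclude $\chi(M')\le m$.

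The three instability statements are established in \cite{ZNS}: cutting out a small disk from the interior of $(M,g)$ and gluing in a small handle, cross-cap, or plug produces a surface of strictly smaller Euler characteristic (and, in t1, of prescribed orientability class) whose DN map differs from $\Lambda$ by an amount controlled by the diameter of the surgery region. The only mild technical issue in the upper-bound proofs is that Lemma~\ref{preliminary estimates lemma}b delivers $C^l$-norm closeness of $\mathfrak{D}'$ and $\mathcal{G}'$ to $\mathfrak{D}$ and $\mathcal{G}$, while the codimension argument needs closeness as maps on the fixed finite-dimensional subspace spanned by $h_1,\dots,h_k$; this is automatic because linear independence is an open condition in any $C^l$ norm on such a subspace.
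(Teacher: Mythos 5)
Your proof is correct and follows essentially the same route as the paper: the ``does not contain'' clauses come from the stability of the codimension of the null set of $\mathfrak{D}$ (linear case) or $\mathcal{G}$ (nonlinear case) under perturbation of $\Lambda$, using the openness argument from the proof of Lemma~\ref{main lemma} together with the closeness estimates of Lemma~\ref{preliminary estimates lemma}, and the ``contains'' clauses are delegated to the surgery construction of \cite{ZNS}. The only cosmetic difference is that the paper cites \cite{ZNS} for all of t1 and spells out the codimension-stability argument only for t2, whereas you re-derive the exclusion part of t1 as the orientable specialization of that same argument — a mild unification, not a different method.
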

\begin{proof}
Statement t1. is proved in \cite{ZNS}. In \cite{ZNS}, it also shown that the surface $(M',g')$ with Euler characteristic $\chi(M')<m$ and DN map $\Lambda'$ arbitrarily close to $\Lambda$ can be constructed by removing a finite number of small disks close to each other (in the metrics $g$) from $M$ then attaching the handles/M\"obius strips to it. This construction does not depend on the orientability of $M$.

Now, suppose that $\Lambda$ is a DN map of non-orientable surface $(M,g)$ with boundary $\Gamma$, and $m=\chi(M)$. Let $m'$ be arbitrarily large negative integer number. In view of Proposition \ref{trace-DN-map connection}, {\it a}), the kernel of the operator $\mathfrak{D}:=\partial_\gamma+\Lambda J\Lambda$ consists only of constants. So, there are smooth functions $f_1,\dots,f_{1-m'}$ on $\Gamma$ such that $\|\mathfrak{D}\sum_{k=1}^{1-m'}c_k f_k\|_{L_2(\Gamma;\mathbb{R})}>c_0>0$ for any $\vec{c}=(c_1,\dots,c_{1-m'})^T\in S^{-m'}$. Then, in view of (\ref{closeness of NDG}), there exists a sufficiently small ball $B(\Lambda,t_0)$ such that the operator $\mathfrak{D}':=\partial_\gamma+\Lambda' J\Lambda'$ obeys $\|\mathfrak{D}'\sum_{k=1}^{1-m'}c_k f_k\|_{L_2(\Gamma;\mathbb{R})}>c_0/2$ for any $\vec{c}\in S^{-m'}$ and $\Lambda'\in B(\Lambda,t_0)$. In particular, ${\rm dim}\frac{C^{\infty}(\Gamma;\mathbb{R})}{{\rm Ker}\mathfrak{D}'}\ge 1-m'$. Suppose that $\Lambda'\in B(\Lambda,t_0)$ is a DN map of an orientable surface $(M',g')$ with boundary $\Gamma$. As shown in \cite{B}, the topology of $(M',g')$ is connected with its DN map $\Lambda'$ by ${\rm dim}\frac{C^{\infty}(\Gamma;\mathbb{R})}{{\rm Ker}\mathfrak{D}'}=1-\chi(M')$. Hence, $\chi(M')<m'$. Thus, $B(\Lambda,t_0)$ does not contain DN maps of orientable surfaces whose Euler characteristics are more than $m'$.

Next, recall that the map $\mathcal{G}$ associated with $\Lambda$ via (\ref{non-linear map}) is $(-m,3)-$admissible. Hence, we have ${\rm dim}\frac{C^{\infty}(\Gamma;\mathbb{R})}{\mathcal{G}^{-1}(\{0\})}=-m$. So, there are smooth functions $f_1,\dots,f_{-m}$ on $\Gamma$ such that $\|\mathcal{G}(\sum_{k=1}^{-m}c_k f_k)\|_{L_2(\Gamma;\mathbb{R})}>c_0>0$ for any $\vec{c}\in S^{-m-1}$. In view of (\ref{closeness of NDG}), there exists a sufficiently small ball $B(\Lambda,t_1)$ such that the operator $\mathcal{G}'$ associated with $\Lambda'$ via (\ref{non-linear map}) satisfies $\|\mathcal{G}(\sum_{k=1}^{-m}c_k f_k)\|_{L_2(\Gamma;\mathbb{R})}>c_0/2$ for any $\vec{c}\in S^{-m'}$ and $\Lambda'\in B(\Lambda,t_1)$. In particular, we have ${\rm dim}\frac{C^{\infty}(\Gamma;\mathbb{R})}{\mathcal{G}^{-1}(\{0\})}\ge -m$. Suppose that $\Lambda'\in B(\Lambda,t_1)$ is a DN map of a non-orientable surface $(M',g')$ with boundary $\Gamma$. Then $\mathcal{G}'$ is a $(-\chi(M'),3)-$admissible operator and, hence, $-\chi(M')={\rm dim}\frac{C^{\infty}(\Gamma;\mathbb{R})}{\mathcal{G}^{-1}(\{0\})}\ge -m$. Thus, $B(\Lambda,t_1)$ does not contain DN maps of non-orientable surfaces whose Euler characteristics are more than $m$. 
\end{proof}

\section{Generalizations}
\label{sec generalizations}
\subsection{Surfaces with unknown electrically isolated components of boundary}
Let $(M,g)$ be a surface with boundary $\partial M$ and $\Gamma$ be a connected component of $\partial M$. In this paragraph, we make no assumptions on orientability of $M$. Denote the length element induced by the metric $g$ on $\Gamma$ by $dl$ and the outward normal vector on $\Gamma$ by $\nu$. Let $u^{f,(is)}$ be a harmonic smooth function in $(M,g)$ obeying $u^{f,(is)}=f$ on $\Gamma$ and $\partial_\nu u^{f}_{is}=0$ on $\partial M\backslash\overline{\Gamma}$. With $(M,g)$ and $\Gamma$ we associate the DN map $\Lambda_{is}$, acting on smooth functions on $\Gamma$ by the rule $\Lambda_{is}f:=\partial_{\nu}u^{f}_{is}|_\Gamma$. 

Let $(M',g')$ be another surface, $\partial M'\supset\Gamma$ and the metric $g'$ induces on $\Gamma$ the same length element $dl$. We write $[(M',g')]=[(M,g)]$ if there is the conformal diffeomorphism $\beta$ between $(M,g)$ and $(M',g')$ which does not move points of $\Gamma$. As proved in \cite{BKor_IP}, the DN map $\Lambda'_{is}$ of $(M',g')$ coincides with $\Lambda_{is}$ if and only if $[(M',g')]=[(M,g)]$. 

Let us write $[(M,g)]\in\mathscr{T}^{+}_{m,q,\Gamma}$ (resp., $[(M,g)]\in\mathscr{T}^{-}_{m,q,\Gamma}$) is the Euler characteristic $\chi(M)$ of $M$ is equal to $m$, the number of connected components of $\partial M$ is equal to $q$, $\Gamma$ is a connected component of $\partial M$. The set $\mathscr{T}^{o}_{m,q,\Gamma}$ ($o=\pm$) is endowed with the Teichm\"uller-type metric $d_T$ given by (\ref{Tdist}). At the same time, the set $\mathscr{D}^\mathscr{o,is}_{m,q,\Gamma}$ of DN maps $\Lambda_{is}$ of surfaces whose conformal classes belong to $\mathscr{T}^{o}_{m,q,\Gamma}$ is endowed with the metric $d_{op}$ given by (\ref{D op}). Introduce the (bijective) map $\mathscr{R}^\mathscr{o,is}_{m,q,\Gamma}: \ {D}^\mathscr{o,is}_{m,q,\Gamma}\to\mathscr{T}^{o}_{m,q,\Gamma}$ by the rule $\mathscr{R}^\mathscr{o,is}_{m,\Gamma}(\Lambda_{is})=[(M,g)]$, where $(M,g)$ is a surface with the DN map $\Lambda_{is}$. 
\begin{theorem}
\label{holes is theorem}
The map $\mathscr{R}^{o,is}_{m,q,\Gamma}: \ (\mathscr{D}^{o,is}_{m,q,\Gamma},d_{op})\to (\mathscr{T}^{o}_{m,q,\Gamma},d_T)$ {\rm(}$o=\pm${\rm)} is continuous. Moreover, for arbitrarily fixed $\Lambda_{is}\in\mathscr{D}^{o,is}_{m,q,\Gamma}$ and any $\Lambda'_{is}\in\mathscr{D}^{o,is}_{m,q,\Gamma}$, the estimate 
\begin{equation}
\label{stab est holes is}
c(\Lambda_{is})d_{op}(\Lambda'_{is},\Lambda_{is})\le d_T(\mathscr{R}^{o,is}_{m,q,\Gamma}(\Lambda'_{is}),\mathscr{R}^{o,is}_{m,q,\Gamma}(\Lambda_{is}))\le C(\Lambda_{is})d_{op}(\Lambda'_{is},\Lambda_{is})^{\frac{1}{3}}
\end{equation}
holds with the constants $c(\Lambda_{is})$, $C(\Lambda_{is})$ depending only on $\Lambda_{is}$.
\end{theorem}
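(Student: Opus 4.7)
The plan is to reduce Theorem \ref{holes is theorem} to an adapted version of Theorem \ref{main theorem} by an \emph{electric doubling} construction. Set $\Gamma^{is}:=\partial M\setminus\overline{\Gamma}$, glue two isometric copies of $(M,g)$ along $\Gamma^{is}$ to form a surface $(\widehat{M},\widehat{g})$, and let $\rho$ be the resulting reflection involution with fixed point set $\Gamma^{is}$. Then $\partial\widehat{M}=\Gamma\sqcup\rho(\Gamma)$, and harmonic functions on $M$ with zero Neumann data on $\Gamma^{is}$ correspond bijectively to $\rho$-symmetric harmonic functions on $\widehat{M}$. Using the divergence theorem one checks that the restriction of the DN map $\widehat{\Lambda}$ of $(\widehat{M},\widehat{g})$ to $\rho$-symmetric Dirichlet data of the form $(f,f\circ\rho^{-1})$ is uniquely determined by $\Lambda_{is}$, and moreover the norm of $\widehat{\Lambda}'-\widehat{\Lambda}$ on $\rho$-symmetric data is controlled by $d_{op}(\Lambda_{is}',\Lambda_{is})$ up to a multiplicative constant depending only on $(M,g)$.

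When $M$ is non-orientable I would subsequently take the orientable double cover of $\widehat{M}$; when $M$ is orientable, $\widehat{M}$ itself is orientable. Either way one is reduced to working on an oriented surface $\mathcal{M}$ carrying one or two commuting isometric involutions, whose Euler characteristic is an explicit function of $m$ and $q$. Imitating Proposition \ref{trace-DN-map connection 1}, I would formulate a criterion for a function $f\in C^{\infty}(\Gamma;\mathbb{R})$ to be the real part of the boundary trace on $\Gamma$ of a holomorphic function on $\mathcal{M}$ that is symmetric under all the relevant involutions; the criterion takes the form $\mathcal{G}_{is}(f)=0$, where $\mathcal{G}_{is}$ is built from $\Lambda_{is}$ by the same algebraic recipe as in \eqref{non-linear map}, possibly with an extra block enforcing the reflection symmetry across $\Gamma^{is}$. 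In particular $\mathcal{G}_{is}\in\mathfrak{Q}_{m',3}(E;F)$ for a suitable codimension $m'=m'(m,q,o)$, so Lemma \ref{main lemma} produces the family $\mathfrak{Y}_{\mathcal{G}_{is},\mathcal{G}_{is}'}$ and hence the linear interpolation maps $\iota_{M,M'}$ of Section \ref{sec map iota}, with the same $t^{1/3}$ rate as in \eqref{application of main lemma - est 2}.

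With the operators $\iota_{M,M'}$ at hand, the remaining arguments repeat those of Sections \ref{sec embeddings} and \ref{sec map alpha} almost verbatim. Namely, fix a symmetric projective holomorphic embedding $\mathcal{E}\colon\mathcal{M}\to\mathbb{C}^n$, define the induced embedding $\mathcal{E}_{\mathcal{M}'}=\iota_{M,M'}\mathcal{E}$ of $\mathcal{M}'$, prove Hausdorff closeness of the images via the generalized argument principle, and construct a near-isometric diffeomorphism $\mathcal{E}(\mathcal{M})\to\mathcal{E}_{\mathcal{M}'}(\mathcal{M}')$ by pointwise minimization of the functional $\mathscr{E}_{\mathcal{M}'}$. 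By construction this diffeomorphism is equivariant under all the symmetry involutions of $\mathcal{M}$, so its pull-back descends first to an equivariant near-conformal diffeomorphism $\mathcal{M}\to\mathcal{M}'$ and then to a near-conformal diffeomorphism $\beta_{M,M'}\colon M\to M'$ fixing $\Gamma$ pointwise, with dilatation bounded by $c(\Lambda_{is})\,d_{op}(\Lambda_{is}',\Lambda_{is})^{1/3}$. This yields the upper estimate in \eqref{stab est holes is}.

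The lower estimate is obtained by repeating, with no essential change, the divergence-theorem argument at the end of the proof of Proposition \ref{continuity estmates}: given a near-extremal diffeomorphism $\beta\colon M\to M'$ in \eqref{Tdist}, one writes $((\Lambda_{is}'-\Lambda_{is})f,f)_{L_2(\Gamma)}$ as a difference of Dirichlet integrals for $g$ and $\beta^{*}g'$ and controls it pointwise in isothermal charts by a constant times $d_{T}([(M',g')],[(M,g)])$. The Neumann condition on $\Gamma^{is}$ removes the only potential obstruction, namely boundary terms on $\Gamma^{is}$, so the argument goes through unchanged. The main obstacle I foresee is the first step above, that is, the derivation of the criterion $\mathcal{G}_{is}(f)=0$ and the computation of the codimension of its null set as a function of $m$ and $q$ for each of the four symmetry types (orientable/non-orientable, presence/absence of the deck involution $\tau$). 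This is the analogue of the main result of \cite{BKor_SIAM} in the mixed Dirichlet--Neumann setting and will require a careful adaptation of the harmonic-field calculus of Section \ref{sec preliminaries} to surfaces equipped with two commuting involutions of prescribed symmetry types; once this criterion is in hand, the remaining modifications are essentially bookkeeping.
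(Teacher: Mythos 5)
Your plan agrees with the paper's for $o=+$: doubling $(M,g)$ across $\Gamma^{is}=\partial M\setminus\overline{\Gamma}$ is exactly the paper's covering space in the orientable case, and your lower-bound argument (divergence theorem in isothermal coordinates with boundary terms on $\Gamma^{is}$ killed by the Neumann condition) is the paper's proof of the left inequality. For $o=-$, however, you take a genuinely different and noticeably heavier route. The paper forms the orientable double cover of $M$ and then \emph{identifies} the two preimages of each point of $\Gamma^{is}$; the result is an orientable surface with Euler characteristic $2m$, boundary consisting of exactly two copies $\Gamma_\pm$ of $\Gamma$, and a single involution $\tau$, so the entire framework of Sections \ref{sec preliminaries}--\ref{sec map alpha} (the decomposition $\mathcal{N}=\mathcal{N}_+\oplus\mathcal{N}_-$, the dimension formulas (\ref{defect spaces dim 2})--(\ref{defect spaces dim 3}), the identification $\partial{\rm M}\equiv\Upsilon=\Gamma\times\{+,-\}$, the symmetrization (\ref{graduation})) applies without modification. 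Your construction -- double $M$ to obtain a non-orientable $\widehat M$, then pass to its o.d.c.\ $\mathcal{M}$ -- produces a surface with Euler characteristic $4m$, \emph{four} boundary circles, and two commuting involutions, and all of the above would have to be reworked for that richer symmetry. That is substantially more than the ``essentially bookkeeping'' you anticipate, and it is precisely the work the paper sidesteps by choosing the smaller cover.

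A second, practical gap: the criterion $\mathcal{G}_{is}(f)=0$ that you flag as the main obstacle is not derived anew in the paper; it is Lemma~2 of \cite{BKor_IP} (cited at the relevant point), so the analogue of Proposition~\ref{trace-DN-map connection 1}, and with it the applicability of Lemma~\ref{main lemma} with codimension $-\chi(M)$ and degree $3$, is available off the shelf. Also, the intermediate step in which you control the restriction of the DN map $\widehat\Lambda$ of $\widehat M$ to $\rho$-symmetric data via $\Lambda_{is}$ is an unnecessary detour: the trace criterion on the paper's covering space is stated directly in terms of $\Lambda_{is}$, exactly as in the closed-boundary case.
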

Theorem \ref{holes is theorem} is proved by applying the arguments presented in Sections \ref{sec preliminaries}-\ref{sec map alpha}; the only difference is in the choice of the covering space $({\rm M},{\rm g})$. Namely, in the orientable case $o=+$, the covering space $({\rm M},{\rm g})$ is obtained by gluing two copies of $(M,g)$ along the parts $\partial M\backslash\Gamma$ of their boundaries. In the non-orientable case $o=-$, the covering space $({\rm M},{\rm g})$ is obtained from the orientable double cover of $({\rm M},{\rm g})$ by identifying the points which have the joint projection belonging to $\partial M\backslash\Gamma$. In both cases, the boundary ${\rm M}$ is the union of two copies $\Gamma_+$, $\Gamma_-$ of $\Gamma$. The set of boundary traces of holomorphic functions on $({\rm M},{\rm g})$ which are symmetric with respect to the involution on ${\rm M}$ is still described by Proposition \ref{trace-DN-map connection}, {\it b}) and formula (\ref{numerator denominator}), where $\Lambda$ is replaced by $\Lambda_{is}$ (see Lemma 2, \cite{BKor_IP}). Due to this fact, the arguments of Sections \ref{sec preliminaries}-\ref{sec map alpha} provide the near-conformal diffeomorphism $\beta_{M,M'}: \ (M,g)\to (M',g')$ obeying (\ref{stability estimate}) for arbitrarily fixed surface $(M,g)$ ($[(M,g)]\in\mathscr{T}^{o}_{m,q,\Gamma}$) and any surface $(M',g')$ such that $\Gamma\subset\partial M'$, $\chi(M')=m$, $(M',g')$ is orientable if $o=+$ and non-orientable if $o=-$, and the DN maps $\Lambda'_{is}$, $\Lambda$ of $(M',g')$ and $(M,g)$ obey $\parallel\Lambda'_{is}-\Lambda_{is}\parallel_{H^{1}(\Gamma)\to L^{2}(\Gamma)}=t$ with sufficiently small $t\in (0,t_0)$. Note that the arguments of Sections \ref{sec preliminaries}-\ref{sec map alpha} do not use the assumption that the numbers of connected components of the boundaries $\partial M$ and $\partial M'$ do coincide but only the assumption $\chi({\rm M}')=\chi({\rm M})$ on the Euler characteristics of the covering spaces ${\rm M}$ and ${\rm M'}$. Hence, along with Theorem \ref{holes is theorem}, we obtain also the following topological stability result.
\begin{prop}
\label{top stab isol}
Let $(M,g)$ be a surface such that $[(M,g)]\in\mathscr{T}^{o}_{m,q,\Gamma}$, and $\Lambda_is\in\mathscr{D}^\mathscr{o,is}_{m,q,\Gamma}$ be a DN map of $(M,g)$. Then there exists $t_0=t_0(\Lambda_{is})$, such that, for any surface obeying $\Gamma\subset\partial M'$, $\chi(M')=m$, $(M',g')$ is orientable if $o=+$ and non-orientable if $o=-$, the inequality $\parallel\Lambda'_{is}-\Lambda_{is}\parallel_{H^{1}(\Gamma)\to L^{2}(\Gamma)}<t_0$ for the DN map $\Lambda'_{is}$ of $(M',g')$ implies that $M'$ is diffeomorphic to $M$. In other words, we have
$$\overline{\mathscr{D}^\mathscr{o,is}_{m,q,\Gamma}}\cap\overline{\mathscr{D}^\mathscr{o,is}_{m,q',\Gamma}}=\varnothing, \qquad q'\ne q$$
{\rm(}the closures in the space of bounded operators acting from $H^{1}(\Gamma)$ to $L^{2}(\Gamma)${\rm)}.
\end{prop}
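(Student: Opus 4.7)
I would attack the proposition as an essentially immediate corollary of the near-conformal diffeomorphism machinery of Sections \ref{sec preliminaries}--\ref{sec map alpha}, adapted to the isolated-boundary setting in Theorem \ref{holes is theorem}. The crucial remark, already made in the paragraph preceding the statement, is that the entire construction of $\beta_{M,M'}$ needs only the equality $\chi({\rm M})=\chi({\rm M}')$ of the Euler characteristics of the two covering spaces, without any a priori matching of the numbers of connected components of $\partial M$ and $\partial M'$. Thus the whole task reduces to a single topological check.

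The check is that $\chi({\rm M})=2m$ regardless of $q$. For $o=+$ this is inclusion--exclusion applied to the double of $M$ along $\partial M\setminus\Gamma$: the glued set is a disjoint union of circles of Euler characteristic zero, so $\chi({\rm M})=2\chi(M)-0=2m$. For $o=-$, one starts from the orientable double cover of $M$ (whose Euler characteristic is $2m$) and identifies, above each component of $\partial M\setminus\Gamma$, the two boundary circles of the cover into a single interior curve; such an identification of two disjoint circles preserves the Euler characteristic. Applied both to $M$ and to any candidate $M'$ satisfying the hypotheses, this yields $\chi({\rm M})=\chi({\rm M}')=2m$. Hence for $t_0=t_0(\Lambda_{is})$ sufficiently small, the constructions of $\iota_{M,M'}$ in Section \ref{sec map iota}, of the induced embedding $\mathcal{E}_{M'}$ in Section \ref{sec embeddings}, and of the near-conformal $\beta_{M,M'}\colon M\to M'$ in Section \ref{sec map alpha}---now formulated for $\Lambda_{is},\Lambda'_{is}$ and using the isolated-boundary variant of Proposition \ref{trace-DN-map connection} proved in \cite{BKor_IP}---produce an honest diffeomorphism between $M$ and $M'$. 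Since both are connected, share the same orientability type and the same Euler characteristic $m$, the existence of such a diffeomorphism forces $q'=q$ and thus $M'\cong M$.

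The closure-disjointness restatement then follows by a standard diagonal argument applied to any hypothetical common limit of sequences from $\mathscr{D}^{o,is}_{m,q,\Gamma}$ and $\mathscr{D}^{o,is}_{m,q',\Gamma}$, $q\neq q'$: one would pick an element $\Lambda_n$ of the first sequence and an element of the second within distance $t_0(\Lambda_n)$, contradicting the main assertion. The only mildly delicate point---and arguably the main technical obstacle---is checking that Proposition \ref{trace-DN-map connection 1}(c) and the defect-space identities (\ref{defect spaces dim 3}) carry over to the modified covering-space construction, so that the non-linear map $\mathcal{G}_{is}$ built from $\Lambda_{is}$ lies in an admissibility class $\mathfrak{Q}_{m',3}(E;F)$ with $m'$ determined by $\chi({\rm M})$ alone, and Lemma \ref{main lemma} can be invoked uniformly. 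This last fact is precisely what is guaranteed by Lemma~2 of \cite{BKor_IP}.
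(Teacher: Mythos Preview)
Your proposal is correct and follows the paper's approach: the proposition is stated there as a direct consequence of the preceding discussion, with no separate proof, the key sentence being that the arguments of Sections~\ref{sec preliminaries}--\ref{sec map alpha} ``do not use the assumption that the numbers of connected components of the boundaries $\partial M$ and $\partial M'$ do coincide but only the assumption $\chi({\rm M}')=\chi({\rm M})$''. Your write-up makes explicit two points the paper leaves implicit, namely the verification that $\chi({\rm M})=2m$ is independent of $q$ in both covering constructions, and the passage from the first assertion to the closure-disjointness restatement; both additions are sound and in the spirit of the paper's argument.
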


\subsection{Orientable surfaces with unknown grounded components of boundary}
In this subsection, we assume that all surfaces are orientable. In the notation of the previous subsection, let $u^{f,(gr)}$ be a harmonic smooth function in $(M,g)$ obeying $u^{f,(gr)}=f$ on $\Gamma$ and $u^{f}_{gr}=0$ on $\partial M\backslash\overline{\Gamma}$. Also, with $(M,g)$ and $\Gamma$ we associate the DN map $\Lambda_{gr}$, acting on smooth functions on $\Gamma$ by the rule $\Lambda_{gr}f:=\partial_{\nu}u^{f}_{gr}|_\Gamma$. As proved in \cite{BKor_IP}, the DN map $\Lambda'_{gr}$ of $(M',g')$ coincides with $\Lambda_{gr}$ if and only if $[(M',g')]=[(M,g)]$. Denote the set DN maps $\Lambda_{gr}$ of surfaces whose conformal classes belong to $\mathscr{T}^{+}_{m,q,\Gamma}$ by $\mathscr{D}^\mathscr{+,gr}_{m,q,\Gamma}$. Introduce the (bijective) map $\mathscr{R}^\mathscr{+,gr}_{m,q,\Gamma}: \ {D}^\mathscr{+,gr}_{m,q,\Gamma}\to\mathscr{T}^{+}_{m,q,\Gamma}$ by the rule $\mathscr{R}^\mathscr{+,gr}_{m,\Gamma}(\Lambda_{gr})=[(M,g)]$, where $(M,g)$ is a surface with the DN map $\Lambda_{gr}$. 
\begin{theorem}
\label{holes gr theorem}
The map $\mathscr{R}^{+,gr}_{m,q,\Gamma}: \ (\mathscr{D}^{+,gr}_{m,q,\Gamma},d_{op})\to (\mathscr{T}^{+}_{m,q,\Gamma},d_T)$ is continuous. Moreover, for arbitrarily fixed $\Lambda_{gr}\in\mathscr{D}^{+,gr}_{m,q,\Gamma}$ and any $\Lambda'_{gr}\in\mathscr{D}^{+,gr}_{m,q,\Gamma}$, the estimate 
\begin{equation}
\label{stab est holes gr}
c(\Lambda_{gr})d_{op}(\Lambda'_{gr},\Lambda_{gr})\le d_T(\mathscr{R}^{+,gr}_{m,q,\Gamma}(\Lambda'_{gr}),\mathscr{R}^{+,gr}_{m,q,\Gamma}(\Lambda_{gr}))\le C(\Lambda_{gr})d_{op}(\Lambda'_{gr},\Lambda_{gr})^{\frac{1}{2}}
\end{equation}
holds with the constants $c(\Lambda_{gr})$, $C(\Lambda_{gr})$ depending only on $\Lambda_{gr}$. Also, we have
$$\overline{\mathscr{D}^\mathscr{+,gr}_{m,q,\Gamma}}\cap\overline{\mathscr{D}^\mathscr{+,gr}_{m,q',\Gamma}}=\varnothing, \qquad q'\ne q$$
{\rm(}the closures in the space of bounded operators acting from $H^{1}(\Gamma)$ to $L^{2}(\Gamma)${\rm)}.
\end{theorem}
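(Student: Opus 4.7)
The proof proceeds by adapting the scheme of Sections \ref{sec map iota}--\ref{sec map alpha}, in parallel with the proof of Theorem \ref{holes is theorem}, with one significant modification in the construction of the covering space $({\rm M},{\rm g})$. Since the unknown components of $\partial M\backslash\overline{\Gamma}$ carry Dirichlet rather than Neumann conditions, I would take ${\rm M}$ to be the double of $(M,g)$ obtained by gluing two copies of $M$ along $\partial M\backslash\overline{\Gamma}$ in such a way that the grounded harmonic function $u^{f,(gr)}$ extends to an \emph{anti-symmetric} (with respect to the resulting involution $\tau$) harmonic function on ${\rm M}$. Under this construction $\Lambda_{gr}$ is realized as the restriction of the DN map of ${\rm M}$ applied to anti-symmetric boundary data on $\partial{\rm M}=\Gamma_+\cup\Gamma_-$.

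The crucial algebraic step, analogous to Proposition \ref{trace-DN-map connection} b), is the characterization (to be extracted from \cite{BKor_IP}) of boundary traces on $\Gamma$ of the \emph{anti-symmetric} holomorphic functions on ${\rm M}$ as the null set of a positive-homogeneous nonlinear map $\mathcal{G}_{gr}$ of \emph{degree two} in $f\in C^{\infty}(\Gamma;\mathbb{R})$; heuristically, the drop from degree three (as in (\ref{non-linear map})) to degree two is forced by the anti-symmetry condition, which eliminates the constant term $c_f$ of (\ref{ratio}) and collapses the cubic structure $\mathcal{G}(f)=0$ into a quadratic one $\mathfrak{N}_{gr}(f)=0$. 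With $\mathcal{G}_{gr}$ in hand, Lemma \ref{main lemma} applies with $\alpha=2$, and the Taylor-type argument that led to (\ref{epsilon est}) now yields $\varepsilon_{\mathcal{G}_{gr},f}(t)\le c(t\|f\|_E)^{1/2}$. Propagating this improved rate through the induced-embedding estimates of Section \ref{sec embeddings} and the near-isometric diffeomorphism construction of Section \ref{sec map alpha} produces, for each $(M',g')$ with $\Lambda'_{gr}$ sufficiently close to $\Lambda_{gr}$, a near-conformal diffeomorphism $\beta_{M,M'}:(M,g)\to(M',g')$ fixing $\Gamma$ pointwise and satisfying ${\rm log}\,K_{\beta_{M,M'}}\le C(\Lambda_{gr})d_{op}(\Lambda'_{gr},\Lambda_{gr})^{1/2}$; this yields the upper bound in (\ref{stab est holes gr}).

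The lower bound in (\ref{stab est holes gr}) follows exactly as in the proof of Proposition \ref{continuity estmates}: the divergence-theorem identity (\ref{optimality 1}) remains valid with $u^{f,(gr)}, u^{'f,(gr)}$ in place of $u,u'$, since both vanish on $\partial M\backslash\overline{\Gamma}$ and therefore contribute no boundary term there, and the chain of inequalities leading to (\ref{optimality 2}) applies verbatim. The topological stability statement $\overline{\mathscr{D}^{+,gr}_{m,q,\Gamma}}\cap \overline{\mathscr{D}^{+,gr}_{m,q',\Gamma}}=\varnothing$ for $q\ne q'$ is obtained exactly as in the proof of Proposition \ref{top stab isol}: the codimension of the null set of $\mathcal{G}_{gr}$ in $C^{\infty}(\Gamma;\mathbb{R})$ is a topological invariant depending on both $\chi(M)$ and $q$ through the Euler characteristic of ${\rm M}$, and preservation of this codimension under small $d_{op}$-perturbations (via analogs of (\ref{closeness of NDG}) and the compactness argument of that proof) forces preservation of $q$.

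The main obstacle is verifying the precise degree-$2$ structure of the null equation characterizing anti-symmetric holomorphic traces on the grounded-type covering space; this is the grounded analog of Proposition \ref{trace-DN-map connection 1} and must be carefully transcribed from the results of \cite{BKor_IP}. Once that algebraic input is in place, the rest of the argument is a routine adaptation of Sections \ref{sec map iota}--\ref{sec map alpha}, with only bookkeeping modifications accounting for the multi-component boundary and the different covering-space construction.
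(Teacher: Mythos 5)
Your overall scheme matches the paper's: the covering space is the double of $M$ along $\partial M\backslash\overline{\Gamma}$, the grounded harmonic function extends anti-symmetrically across the seam, the relevant trace characterization (Lemma~1 of \cite{BKor_IP}, reproduced as the Proposition following Theorem~\ref{holes gr theorem}) produces a degree-$2$ homogeneous map $\mathcal{G}(h)=\partial_\gamma[(J\Lambda_{gr}h)^2-h^2]-2\Lambda_{gr}(hJ\Lambda_{gr}h)$ that is $(1-\chi(M),2)$-admissible, and Lemma~\ref{main lemma} with $\alpha=2$ yields the $t^{1/2}$ rate, which then propagates through Sections~\ref{sec embeddings}--\ref{sec map alpha}. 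The lower bound follows exactly as you say. So far the reconstruction is sound.

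Two points of correction. First, a terminological one: you speak of ``anti-symmetric holomorphic functions'' but the paper works with the single space $\mathcal{H}_+({\rm M})$ of \emph{symmetric} holomorphic functions $w$ with $w^*=\overline{w\circ\tau}=w$, for which $\Re w$ is $\tau$-symmetric and $\Im w$ is $\tau$-anti-symmetric. In the grounded case the physical harmonic function appears as the imaginary, anti-symmetric part (whence the free datum $h=\Im\eta$ and the relation $f=-J\Lambda_{gr}h+c$), but the function space one characterizes is the same $\mathcal{H}_+({\rm M})$; the degree drop from $3$ to $2$ comes simply from the fact that the characterizing equation is $\mathcal{G}(h)=0$ with $\mathcal{G}$ already a quadratic form, not from eliminating a ratio $c_f=\mathfrak{N}/\mathfrak{D}$ as in the non-orientable closed-boundary case.

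Second, and this is a genuine gap: your argument for $\overline{\mathscr{D}^{+,gr}_{m,q,\Gamma}}\cap\overline{\mathscr{D}^{+,gr}_{m,q',\Gamma}}=\varnothing$ does not work. You claim the codimension of $\mathcal{G}^{-1}(\{0\})$ is a topological invariant ``depending on both $\chi(M)$ and $q$ through the Euler characteristic of ${\rm M}$,'' but the double ${\rm M}$ obtained by gluing two copies of $M$ along the $q-1$ circles of $\partial M\backslash\Gamma$ always has $\chi({\rm M})=2\chi(M)=2m$ (Mayer--Vietoris over circles of Euler characteristic~$0$) and always has exactly two boundary components $\Gamma_\pm$, irrespective of $q$. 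Concretely, for $m=-2$ one can take either genus~$1$ with $q=2$ or genus~$0$ with $q=4$; the two doubles are both genus-$2$ surfaces with two boundary circles and are diffeomorphic, so the codimension $1-\chi(M)=3$ is the same. The codimension therefore cannot distinguish $q$ from $q'$. The paper's argument is different: given only $\chi(M')=m$ (and hence $\chi({\rm M}')=\chi({\rm M})$), the machinery of Sections~\ref{sec map iota}--\ref{sec map alpha} goes through and produces an involution-equivariant near-conformal diffeomorphism $\beta_{{\rm M},{\rm M}'}$ that descends to a diffeomorphism $\beta_{M,M'}\colon M\to M'$. The mere existence of that diffeomorphism forces $M\cong M'$, which for orientable surfaces with fixed $\chi$ means $q=q'$. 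The topological rigidity is thus a \emph{consequence} of the constructive stability argument, not an independent codimension-counting fact; your version would have to be replaced by the paper's reasoning.
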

The proof of Theorem \ref{holes gr theorem} is the same as the proof of Theorem \ref{holes is theorem} and Proposition \ref{top stab isol}. The only difference is that, to determine the elements of $\mathcal{H}_+({\rm M})$ i.e. the boundary traces of holomorphic functions on $({\rm M},{\rm g})$ which are symmetric with respect to the involution on ${\rm M}$, one should use the following fact instead of Proposition \ref{trace-DN-map connection}, {\it b}).
\begin{prop}[See Lemma 1, \cite{BKor_IP}]
\label{trace-DN-map connection}
The function $\eta:=f+ih$ with $f,h\in C^{\infty}(\Gamma;\mathbb{R})$ is a trace on $\Gamma\equiv\Gamma_+$ of symmetric holomorphic function on ${\rm M}$ if and only if $\mathcal{G}(h)=0$ and $f=-J\Lambda_{gr}h+c$ {\rm(}$c\in\mathbb{R}${\rm)}, where the map $\mathcal{G}: \ C^{\infty}(\Gamma;\mathbb{R})\to C^{\infty}(\Gamma;\mathbb{R})$ is given by
$$ \mathcal{G}(h)=\partial_{\gamma}[(J\Lambda_{gr} h)^2-h^2]-2\Lambda_{gr}(hJ\Lambda_{gr} h).$$
\end{prop}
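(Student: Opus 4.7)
The plan is to mirror the strategy of Proposition~\ref{trace-DN-map connection 1}, adapted to the covering space ${\rm M}$ now obtained by gluing two copies of $M$ along $\partial M\setminus\overline\Gamma$, with involution $\tau$ whose fixed-point set is this gluing seam. Under this construction, $\tau$-symmetric functions on ${\rm M}$ descend to $M$ as solutions of problems with Neumann condition on $\partial M\setminus\overline\Gamma$, while $\tau$-anti-symmetric functions must vanish on the fixed set and so descend to Dirichlet problems there. This is exactly what makes $\Lambda_{gr}$ (rather than the full DN map) the relevant boundary operator.

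\emph{Necessity} is the easy direction. Let $w=u+iv\in\mathcal{H}_+({\rm M})$ have boundary trace $\eta=f+ih$ on $\Gamma_+\equiv\Gamma$. Symmetry of $w$ forces $u\circ\tau=u$ and $v\circ\tau=-v$; the latter makes $v$ vanish on $\partial M\setminus\overline\Gamma$, so its restriction to one sheet solves the mixed BVP defining $\Lambda_{gr}$, giving $\partial_\nu v|_{\Gamma_+}=\Lambda_{gr}h$. Restricting the Cauchy--Riemann equation $\Phi\nabla u=\nabla v$ to $\Gamma_+$ (using $\Phi\nu=\gamma$) yields $\partial_\gamma f=-\Lambda_{gr}h$, equivalently $f=-J\Lambda_{gr}h+c$. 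To extract the nonlinear condition, I would apply the same linear relation to $w^2\in\mathcal{H}_+({\rm M})$, whose trace on $\Gamma_+$ is $(f^2-h^2)+i(2fh)$: the identity $\partial_\gamma(f^2-h^2)=-\Lambda_{gr}(2fh)$, with $f$ eliminated via the linear relation, reduces after the $c$-dependent terms cancel to exactly $\mathcal{G}(h)=0$.

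\emph{Sufficiency} is where the real work lies. Given $h$ with $\mathcal{G}(h)=0$ and $f=-J\Lambda_{gr}h+c$, I would first construct $v$ on ${\rm M}$ as the anti-symmetric harmonic extension of the mixed-BVP solution with Dirichlet data $h$ on $\Gamma$ and $0$ on $\partial M\setminus\overline\Gamma$. The problem then reduces to producing a single-valued harmonic conjugate $u$ on ${\rm M}$, equivalently to verifying that the closed 1-form $\ast dv$ has vanishing periods over a basis of $H_1({\rm M};\mathbb{Z})$. Once such $u$ exists, its $\tau$-symmetry follows from (\ref{rotation symmetry}) together with the anti-symmetry of $v$, and adjusting the additive constant recovers $u|_{\Gamma_+}=f$.

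The hard part will be showing that $\mathcal{G}(h)=0$ actually suffices for all the period conditions on ${\rm M}$. My plan is a dimension-count in the spirit of Proposition~\ref{trace-DN-map connection 1}(c): use analogues of the defect-space identities (\ref{defect spaces dim 3}) for the glued-double ${\rm M}$ (with $\chi({\rm M})=2m$ and two boundary components $\Gamma_\pm$) to compute $\dim\mathrm{Tr}\,\mathcal{H}_+({\rm M})$, and show this matches the codimension of $\mathcal{G}^{-1}(\{0\})$ inside the linear subspace of $C^\infty(\Gamma;\mathbb{R})$ cut out by the condition that $\Lambda_{gr}h$ admits a primitive. Since the necessity argument already provides the inclusion $\mathrm{Tr}\,\mathcal{H}_+({\rm M})\subseteq\{\eta=f+ih\mid\mathcal{G}(h)=0,\ f=-J\Lambda_{gr}h+c\}$ and both sides are linear (modulo the constant $c$), matching codimensions forces equality and completes the proof.
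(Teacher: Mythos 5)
First, a caveat: the paper does not prove this statement at all — it is imported verbatim as Lemma~1 of \cite{BKor_IP}, so there is no in-paper argument to compare yours against, and I am assessing your proposal on its own merits.

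Your \emph{necessity} argument is correct and essentially complete. Gluing two copies of $(M,g)$ along $\partial M\setminus\overline\Gamma$ gives a covering surface $({\rm M},{\rm g})$ whose involution $\tau$ fixes the seam; the symmetry $w^*=w$ then forces $u\circ\tau=u$ and $v\circ\tau=-v$, so $v$ vanishes on the seam and its restriction to one sheet is exactly the grounded solution with Dirichlet data $h$ on $\Gamma$, whence $\partial_\nu v|_\Gamma=\Lambda_{gr}h$. Restricting $\nabla v=\Phi\nabla u$ to $\Gamma_+$ with $\Phi\nu\equiv\gamma$ and the skew-symmetry of $\Phi$ gives $\partial_\gamma f=-\Lambda_{gr}h$, i.e.\ $f=-J\Lambda_{gr}h+c$. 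The trick of applying the same linear boundary relation to $w^2\in\mathcal{H}_+({\rm M})$ (with trace $(f^2-h^2)+2ifh$) is exactly right: substituting $f=-J\Lambda_{gr}h+c$ and $\partial_\gamma f=-\Lambda_{gr}h$ into $\partial_\gamma(f^2-h^2)=-\Lambda_{gr}(2fh)$, the $c$-terms cancel, and after multiplying by $-2$ and using $\Lambda_{gr}h=\partial_\gamma(J\Lambda_{gr}h)$ one lands precisely on $\mathcal{G}(h)=0$. I checked this computation and it closes cleanly.

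The \emph{sufficiency} direction is where your proposal has a genuine gap. You correctly identify that the problem reduces to showing the periods of $\ast dv$ vanish, where $v$ is the anti-symmetric harmonic extension of the grounded solution with data $h$; but you then defer the actual work to a codimension count, and that plan is circular as written. The codimension of $\mathcal{G}^{-1}(\{0\})$ in $C^\infty(\Gamma;\mathbb{R})$ is only a meaningful quantity once one knows $\mathcal{G}^{-1}(\{0\})$ is a linear subspace — and that linearity is itself a \emph{consequence} of the proposition you are trying to prove, since $\mathcal{G}$ is a quadratic map whose zero set is not a priori a linear space. Your necessity argument only delivers the inclusion of the linear space $V:=\{\Im\eta : \eta\in\mathrm{Tr}\,\mathcal{H}_+({\rm M})\}$ into $\mathcal{G}^{-1}(\{0\})$; a matching-codimension argument cannot upgrade this to equality without an independent proof that $\mathcal{G}^{-1}(\{0\})$ is linear (equivalently, that it equals the linear set of period conditions). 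In other words, the honest route is precisely the direct period-vanishing argument you name but do not carry out, and that step is the actual content of the cited lemma. Until it is supplied, the sufficiency half remains unproved.
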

In contract to the map (\ref{non-linear map}), the map $\mathcal{G}$ is homogeneous of degree 2 i.e. $\mathcal{G}(ch)=c^2\mathcal{G}(h)$ for any $h\in C^{\infty}(\Gamma;\mathbb{R})$ and $c\in\mathbb{R}$. Also, $\mathcal{G}(f+h)=\mathcal{G}(f)+\mathcal{G}(h)+\mathcal{G}^{(1)}_f(h)$, where $\mathcal{G}^{(1)}_f$ is a linear operator given by
$$\mathcal{G}^{(1)}_f(h):=2(\partial_{\gamma}[(J\Lambda_{gr} f)(J\Lambda_{gr} h)-fh]-2\Lambda_{gr}(hJ\Lambda_{gr} f-fJ\Lambda_{gr}h)).$$
In addition, the codimension of $\mathcal{G}^{-1}(\{0\})$ in $C^{\infty}(\Gamma,\mathbb{R})$ is equal to $1-\chi(M)$. Hence, the map $\mathcal{G}$ is $(1-\chi(M),2)$-admissible. 

Let $\mathcal{G}$ and $\mathcal{G}'$ be maps associated with the DN maps $\Lambda_{gr}\in \mathscr{D}^{o,gr}_{m,q,\Gamma}$ and $\Lambda'_{gr}\in \mathscr{D}^{o,gr}_{m,q,\Gamma}$, respectively. Then the application of Lemma \ref{main lemma} and the arguments leading to formula (\ref{application of main lemma - est 1}) provide the linear map $\mathfrak{Y}_{\mathcal{G},\mathcal{G}'}: \ \mathcal{G}^{-1}(\{0\})\to \mathcal{G}^{'-1}(\{0\})$ satisfying
$$\|\mathfrak{Y}_{\mathcal{G},\mathcal{G}'}f-f\|_{C^{l}(\Gamma)}\le c_{f,l}t^{1/2}, \qquad \forall f\in \mathcal{G}^{-1}(\{0\}), \ l=1,2,\dots,$$
where $t:=\parallel\Lambda'_{gr}-\Lambda_{gr}\parallel_{H^{1}(\Gamma)\to L^{2}(\Gamma)}$. We define the operator $\tilde{\iota}_{M,M'}: \ {\rm Tr}\mathcal{H}_+({\rm M})\to {\rm Tr}\mathcal{H}_+({\rm M}')$ by the rule 
$$\tilde{\iota}_{M,M'}\eta:= -J\Lambda_{gr}h\mathfrak{Y}_{\mathcal{G},\mathcal{G}'}h+c+i\mathfrak{Y}_{\mathcal{G},\mathcal{G}'}h$$
where $\eta=-J\Lambda_{gr}h+c+ih$, $h\in\mathcal{G}^{-1}(\{0\})$, and $c\in\mathbb{R}$. Then the estimate
$$\sup_{(M',g')\in \mathbb{B}(M,t)}\|\tilde{\iota}_{M,M'}\eta-\eta\|_{C^{l}(\Gamma;\mathbb{R})}\le c_{\eta,l}t^{\frac{1}{2}}$$
is valid instead of (\ref{application of main lemma - est 2}), where $\eta\in{\rm Tr}\mathcal{H}_+({\rm M})$. Now, by repeating the arguments of Sections \ref{sec embeddings} and \ref{sec map alpha}, one proves (\ref{stab est holes gr}) (cf. with (\ref{stab est holes is})).

In conclusion, we note that the results described in this section and in the paper \cite{BKor_IP} can easily be generalized to the case in which the DN map is given only on an arbitrarily small joint segment of the boundaries of the surfaces while the homogeneous Dirichlet or Neumann condition is given on the remainders of the boundaries.

\end{document}